\newtheorem{theorem}{Theorem}[section]
\newtheorem{axiom}[theorem]{Axiom}
\newtheorem{conjecture}[theorem]{Conjecture}
\newtheorem{corollary}[theorem]{Corollary}
\newtheorem{definition}[theorem]{Definition}
\newtheorem{example}[theorem]{Example}
\newtheorem{exercise}[theorem]{Exercise}
\newtheorem{lemma}[theorem]{Lemma}
\newtheorem{proposition}[theorem]{Proposition}
\newtheorem{remark}[theorem]{Remark}
\newenvironment{proof}[1][Proof]{\noindent\textbf{#1.} }{\ \rule{0.5em}{0.5em}}
\renewcommand{\theequation}{\thesection.\arabic{equation}}
\let\pdfoutput=\undefined\fi
\chardef\@x10\chardef\@xv60
\def\tcitime{
\def\@time{%
  \@minute\time\@hour\@minute\divide\@hour\@xv
  \ifnum\@hour<\@x 0\fi\the\@hour:%
  \multiply\@hour\@xv\advance\@minute-\@hour
  \ifnum\@minute<\@x 0\fi\the\@minute
  }}%
\def\x@hyperref#1#2#3{%
   \catcode`\~ = 12
   \catcode`\$ = 12
   \catcode`\_ = 12
   \catcode`\# = 12
   \catcode`\& = 12
   \y@hyperref{#1}{#2}{#3}%
}
\def\y@hyperref#1#2#3#4{%
   #2\ref{#4}#3
   \catcode`\~ = 13
   \catcode`\$ = 3
   \catcode`\_ = 8
   \catcode`\# = 6
   \catcode`\& = 4
}
\def\QCTOpt[#1]#2{%
  \def\QCTOptB{#1}
  \def\QCTOptA{#2}
}
\def\QCTNOpt#1{%
  \def\QCTOptA{#1}
  \let\QCTOptB\empty
}
\def\Qct{%
  \@ifnextchar[{%
    \QCTOpt}{\QCTNOpt}
}
\def\QCBOpt[#1]#2{%
  \def\QCBOptB{#1}%
  \def\QCBOptA{#2}%
}
\def\QCBNOpt#1{%
  \def\QCBOptA{#1}%
  \let\QCBOptB\empty
}
\def\Qcb{%
  \@ifnextchar[{%
    \QCBOpt}{\QCBNOpt}%
}
\def\PrepCapArgs{%
  \ifx\QCBOptA\empty
    \ifx\QCTOptA\empty
      {}%
    \else
      \ifx\QCTOptB\empty
        {\QCTOptA}%
      \else
        [\QCTOptB]{\QCTOptA}%
      \fi
    \fi
  \else
    \ifx\QCBOptA\empty
      {}%
    \else
      \ifx\QCBOptB\empty
        {\QCBOptA}%
      \else
        [\QCBOptB]{\QCBOptA}%
      \fi
    \fi
  \fi
}
\def\GRAPHICSPS#1{%
 \ifcase\GRAPHICSTYPE
   \special{ps: #1}%
 \or
   \special{language "PS", include "#1"}%
 \fi
}%
\def\graffile#1#2#3#4{%
    \bgroup
	   \@inlabelfalse
       \leavevmode
       \@ifundefined{bbl@deactivate}{\def~{\string~}}{\activesoff}%
        \raise -#4 \BOXTHEFRAME{%
           \hbox to #2{\raise #3\hbox to #2{\null #1\hfil}}}%
    \egroup
}%
\def\draftbox#1#2#3#4{%
 \leavevmode\raise -#4 \hbox{%
  \frame{\rlap{\protect\tiny #1}\hbox to #2%
   {\vrule height#3 width\z@ depth\z@\hfil}%
  }%
 }%
}%
\let\nographics=\@msidraft
\newif\ifwasdraft
\def\GRAPHIC#1#2#3#4#5{%
   \ifnum\@msidraft=\@ne\draftbox{#2}{#3}{#4}{#5}%
   \else\graffile{#1}{#3}{#4}{#5}%
   \fi
}
\def\addtoLaTeXparams#1{%
    \edef\LaTeXparams{\LaTeXparams #1}}%
\newif\ifBoxFrame \BoxFramefalse
\newif\ifOverFrame \OverFramefalse
\newif\ifUnderFrame \UnderFramefalse
\def\BOXTHEFRAME#1{%
   \hbox{%
      \ifBoxFrame
         \frame{#1}%
      \else
         {#1}%
      \fi
   }%
}
\def\doFRAMEparams#1{\BoxFramefalse\OverFramefalse\UnderFramefalse\readFRAMEparams#1\end}%
\def\readFRAMEparams#1{%
 \ifx#1\end%
  \let\next=\relax
  \else
  \ifx#1i\dispkind=\z@\fi
  \ifx#1d\dispkind=\@ne\fi
  \ifx#1f\dispkind=\tw@\fi
  \ifx#1t\addtoLaTeXparams{t}\fi
  \ifx#1b\addtoLaTeXparams{b}\fi
  \ifx#1p\addtoLaTeXparams{p}\fi
  \ifx#1h\addtoLaTeXparams{h}\fi
  \ifx#1X\BoxFrametrue\fi
  \ifx#1O\OverFrametrue\fi
  \ifx#1U\UnderFrametrue\fi
  \ifx#1w
    \ifnum\@msidraft=1\wasdrafttrue\else\wasdraftfalse\fi
    \@msidraft=\@ne
  \fi
  \let\next=\readFRAMEparams
  \fi
 \next
 }%
\def\IFRAME#1#2#3#4#5#6{%
      \bgroup
      \let\QCTOptA\empty
      \let\QCTOptB\empty
      \let\QCBOptA\empty
      \let\QCBOptB\empty
      #6%
      \parindent=0pt
      \leftskip=0pt
      \rightskip=0pt
      \setbox0=\hbox{\QCBOptA}%
      \@tempdima=#1\relax
      \ifOverFrame
          \typeout{This is not implemented yet}%
          \show\HELP
      \else
         \ifdim\wd0>\@tempdima
            \advance\@tempdima by \@tempdima
            \ifdim\wd0 >\@tempdima
               \setbox1 =\vbox{%
                  \unskip\hbox to \@tempdima{\hfill\GRAPHIC{#5}{#4}{#1}{#2}{#3}\hfill}%
                  \unskip\hbox to \@tempdima{\parbox[b]{\@tempdima}{\QCBOptA}}%
               }%
               \wd1=\@tempdima
            \else
               \textwidth=\wd0
               \setbox1 =\vbox{%
                 \noindent\hbox to \wd0{\hfill\GRAPHIC{#5}{#4}{#1}{#2}{#3}\hfill}\\%
                 \noindent\hbox{\QCBOptA}%
               }%
               \wd1=\wd0
            \fi
         \else
            \ifdim\wd0>0pt
              \hsize=\@tempdima
              \setbox1=\vbox{%
                \unskip\GRAPHIC{#5}{#4}{#1}{#2}{0pt}%
                \break
                \unskip\hbox to \@tempdima{\hfill \QCBOptA\hfill}%
              }%
              \wd1=\@tempdima
           \else
              \hsize=\@tempdima
              \setbox1=\vbox{%
                \unskip\GRAPHIC{#5}{#4}{#1}{#2}{0pt}%
              }%
              \wd1=\@tempdima
           \fi
         \fi
         \@tempdimb=\ht1
         \advance\@tempdimb by -#2
         \advance\@tempdimb by #3
         \leavevmode
         \raise -\@tempdimb \hbox{\box1}%
      \fi
      \egroup%
}%
\def\DFRAME#1#2#3#4#5{%
  \vspace\topsep
  \hfil\break
  \bgroup
     \leftskip\@flushglue
	 \rightskip\@flushglue
	 \parindent\z@
	 \parfillskip\z@skip
     \let\QCTOptA\empty
     \let\QCTOptB\empty
     \let\QCBOptA\empty
     \let\QCBOptB\empty
	 \vbox\bgroup
        \ifOverFrame 
           #5\QCTOptA\par
        \fi
        \GRAPHIC{#4}{#3}{#1}{#2}{\z@}%
        \ifUnderFrame 
           \break#5\QCBOptA
        \fi
	 \egroup
  \egroup
  \vspace\topsep
  \break
}%
\def\FFRAME#1#2#3#4#5#6#7{%
  \@ifundefined{floatstyle}
    {
     \begin{figure}[#1]%
    }
    {
	 \ifx#1h
      \begin{figure}[H]%
	 \else
      \begin{figure}[#1]%
	 \fi
	}
  \let\QCTOptA\empty
  \let\QCTOptB\empty
  \let\QCBOptA\empty
  \let\QCBOptB\empty
  \ifOverFrame
    #4
    \ifx\QCTOptA\empty
    \else
      \ifx\QCTOptB\empty
        \caption{\QCTOptA}%
      \else
        \caption[\QCTOptB]{\QCTOptA}%
      \fi
    \fi
    \ifUnderFrame\else
      \label{#5}%
    \fi
  \else
    \UnderFrametrue%
  \fi
  \begin{center}\GRAPHIC{#7}{#6}{#2}{#3}{\z@}\end{center}%
  \ifUnderFrame
    #4
    \ifx\QCBOptA\empty
      \caption{}%
    \else
      \ifx\QCBOptB\empty
        \caption{\QCBOptA}%
      \else
        \caption[\QCBOptB]{\QCBOptA}%
      \fi
    \fi
    \label{#5}%
  \fi
  \end{figure}%
 }%
\def\makeactives{
  \catcode`\"=\active
  \catcode`\;=\active
  \catcode`\:=\active
  \catcode`\'=\active
  \catcode`\~=\active
}
   \gdef\activesoff{%
      \def"{\string"}%
      \def;{\string;}%
      \def:{\string:}%
      \def'{\string'}%
      \def~{\string~}%
    }
\def\FRAME#1#2#3#4#5#6#7#8{%
 \bgroup
 \ifnum\@msidraft=\@ne
   \wasdrafttrue
 \else
   \wasdraftfalse%
 \fi
 \def\LaTeXparams{}%
 \dispkind=\z@
 \def\LaTeXparams{}%
 \doFRAMEparams{#1}%
 \ifnum\dispkind=\z@\IFRAME{#2}{#3}{#4}{#7}{#8}{#5}\else
  \ifnum\dispkind=\@ne\DFRAME{#2}{#3}{#7}{#8}{#5}\else
   \ifnum\dispkind=\tw@
    \edef\@tempa{\noexpand\FFRAME{\LaTeXparams}}%
    \@tempa{#2}{#3}{#5}{#6}{#7}{#8}%
    \fi
   \fi
  \fi
  \ifwasdraft\@msidraft=1\else\@msidraft=0\fi{}%
  \egroup
 }%
\def\TEXUX#1{"texux"}
\def\func#1{\mathop{\rm #1}\nolimits}%
\long\def\QQQ#1#2{%
     \long\expandafter\def\csname#1\endcsname{#2}}%
\long\def\QQA#1#2{}%
\def\QTR#1#2{{\csname#1\endcsname {#2}}}%
\def\EXPAND#1[#2]#3{}%
\def\NOEXPAND#1[#2]#3{}%
\def\LaTeXparent#1{}%
\def\ChildStyles#1{}%
\def\ChildDefaults#1{}%
\def\QTagDef#1#2#3{}%
  \providecommand{\UNICODE}[2][]{\protect\rule{.1in}{.1in}}
  \providecommand{\U}[1]{\protect\rule{.1in}{.1in}}
\def\QQfnmark#1{\footnotemark}
 \def\abstract{%
  \if@twocolumn
   \section*{Abstract (Not appropriate in this style!)}%
   \else \small 
   \begin{center}{\bf Abstract\vspace{-.5em}\vspace{\z@}}\end{center}%
   \quotation 
   \fi
  }%
   \def\registered{\relax\ifmmode{}\r@gistered
                    \else$\m@th\r@gistered$\fi}%
 \def\r@gistered{^{\ooalign
  {\hfil\raise.07ex\hbox{$\scriptstyle\rm\text{R}$}\hfil\crcr
  \mathhexbox20D}}}}{}%
\newdimen\theight
\def\newfmtname{LaTeX2e}
  \DeclareOldFontCommand{\rm}{\normalfont\rmfamily}{\mathrm}
  \DeclareOldFontCommand{\sf}{\normalfont\sffamily}{\mathsf}
  \DeclareOldFontCommand{\tt}{\normalfont\ttfamily}{\mathtt}
  \DeclareOldFontCommand{\bf}{\normalfont\bfseries}{\mathbf}
  \DeclareOldFontCommand{\it}{\normalfont\itshape}{\mathit}
  \DeclareOldFontCommand{\sl}{\normalfont\slshape}{\@nomath\sl}
  \DeclareOldFontCommand{\sc}{\normalfont\scshape}{\@nomath\sc}
\def\alpha{{\Greekmath 010B}}%
\def\beta{{\Greekmath 010C}}%
\def\gamma{{\Greekmath 010D}}%
\def\delta{{\Greekmath 010E}}%
\def\epsilon{{\Greekmath 010F}}%
\def\zeta{{\Greekmath 0110}}%
\def\eta{{\Greekmath 0111}}%
\def\theta{{\Greekmath 0112}}%
\def\iota{{\Greekmath 0113}}%
\def\kappa{{\Greekmath 0114}}%
\def\lambda{{\Greekmath 0115}}%
\def\mu{{\Greekmath 0116}}%
\def\nu{{\Greekmath 0117}}%
\def\xi{{\Greekmath 0118}}%
\def\pi{{\Greekmath 0119}}%
\def\rho{{\Greekmath 011A}}%
\def\sigma{{\Greekmath 011B}}%
\def\tau{{\Greekmath 011C}}%
\def\upsilon{{\Greekmath 011D}}%
\def\phi{{\Greekmath 011E}}%
\def\chi{{\Greekmath 011F}}%
\def\psi{{\Greekmath 0120}}%
\def\omega{{\Greekmath 0121}}%
\def\varepsilon{{\Greekmath 0122}}%
\def\vartheta{{\Greekmath 0123}}%
\def\varpi{{\Greekmath 0124}}%
\def\varrho{{\Greekmath 0125}}%
\def\varsigma{{\Greekmath 0126}}%
\def\varphi{{\Greekmath 0127}}%
\def\nabla{{\Greekmath 0272}}
\def\FindBoldGroup{%
   {\setbox0=\hbox{$\mathbf{x\global\edef\theboldgroup{\the\mathgroup}}$}}%
}
\def\Greekmath#1#2#3#4{%
    \if@compatibility
        \ifnum\mathgroup=\symbold
           \mathchoice{\mbox{\boldmath$\displaystyle\mathchar"#1#2#3#4$}}%
                      {\mbox{\boldmath$\textstyle\mathchar"#1#2#3#4$}}%
                      {\mbox{\boldmath$\scriptstyle\mathchar"#1#2#3#4$}}%
                      {\mbox{\boldmath$\scriptscriptstyle\mathchar"#1#2#3#4$}}%
        \else
           \mathchar"#1#2#3#4%
        \fi 
    \else 
        \FindBoldGroup
        \ifnum\mathgroup=\theboldgroup 
           \mathchoice{\mbox{\boldmath$\displaystyle\mathchar"#1#2#3#4$}}%
                      {\mbox{\boldmath$\textstyle\mathchar"#1#2#3#4$}}%
                      {\mbox{\boldmath$\scriptstyle\mathchar"#1#2#3#4$}}%
                      {\mbox{\boldmath$\scriptscriptstyle\mathchar"#1#2#3#4$}}%
        \else
           \mathchar"#1#2#3#4%
        \fi     	    
	  \fi}
\newif\ifGreekBold  \GreekBoldfalse
\let\SAVEPBF=\pbf
\def\pbf{\GreekBoldtrue\SAVEPBF}%
  \newcounter{equationnumber}  
  \def\mathletters{%
     \addtocounter{equation}{1}
     \edef\@currentlabel{\theequation}%
     \setcounter{equationnumber}{\c@equation}
     \setcounter{equation}{0}%
     \edef\theequation{\@currentlabel\noexpand\alph{equation}}%
  }
    \def\BibTeX{{\rm B\kern-.05em{\sc i\kern-.025em b}\kern-.08em
                 T\kern-.1667em\lower.7ex\hbox{E}\kern-.125emX}}}{}%
\def\AmS{{\protect\usefont{OMS}{cmsy}{m}{n}%
                A\kern-.1667em\lower.5ex\hbox{M}\kern-.125emS}}}{}%
\def\@@eqncr{\let\@tempa\relax
    \ifcase\@eqcnt \def\@tempa{& & &}\or \def\@tempa{& &}%
      \else \def\@tempa{&}\fi
     \@tempa
     \if@eqnsw
        \iftag@
           \@taggnum
        \else
           \@eqnnum\stepcounter{equation}%
        \fi
     \fi
     \global\tag@false
     \global\@eqnswtrue
     \global\@eqcnt\z@\cr}
\def\TCItag{\@ifnextchar*{\@TCItagstar}{\@TCItag}}
\def\@TCItag#1{%
    \global\tag@true
    \global\def\@taggnum{(#1)}%
    \global\def\@currentlabel{#1}}
\def\@TCItagstar*#1{%
    \global\tag@true
    \global\def\@taggnum{#1}%
    \global\def\@currentlabel{#1}}
\def\ExitTCILatex{\makeatother }
\if@compatibility\message{amsmath already loaded}\fi\aftergroup\ExitTCILatex}
\if@compatibility\message{amstex already loaded}\fi\aftergroup\ExitTCILatex}
\if@compatibility\message{amsgen already loaded}\fi\aftergroup\ExitTCILatex}
\let\DOTSI\relax
\def\RIfM@{\relax\ifmmode}%
\def\FN@{\futurelet\next}%
\def\iint{\DOTSI\intno@\tw@\FN@\ints@}%
\def\iiint{\DOTSI\intno@\thr@@\FN@\ints@}%
\def\iiiint{\DOTSI\intno@4 \FN@\ints@}%
\def\idotsint{\DOTSI\intno@\z@\FN@\ints@}%
\def\ints@{\findlimits@\ints@@}%
\newif\iflimtoken@
\newif\iflimits@
\def\findlimits@{\limtoken@true\ifx\next\limits\limits@true
 \else\ifx\next\nolimits\limits@false\else
 \limtoken@false\ifx\ilimits@\nolimits\limits@false\else
 \ifinner\limits@false\else\limits@true\fi\fi\fi\fi}%
\def\multint@{\int\ifnum\intno@=\z@\intdots@                          
 \else\intkern@\fi                                                    
 \ifnum\intno@>\tw@\int\intkern@\fi                                   
 \ifnum\intno@>\thr@@\int\intkern@\fi                                 
 \int}
\def\multintlimits@{\intop\ifnum\intno@=\z@\intdots@\else\intkern@\fi
 \ifnum\intno@>\tw@\intop\intkern@\fi
 \ifnum\intno@>\thr@@\intop\intkern@\fi\intop}%
\def\intic@{%
    \mathchoice{\hskip.5em}{\hskip.4em}{\hskip.4em}{\hskip.4em}}%
\def\negintic@{\mathchoice
 {\hskip-.5em}{\hskip-.4em}{\hskip-.4em}{\hskip-.4em}}%
\def\ints@@{\iflimtoken@                                              
 \def\ints@@@{\iflimits@\negintic@
   \mathop{\intic@\multintlimits@}\limits                             
  \else\multint@\nolimits\fi                                          
  \eat@}
 \else                                                                
 \def\ints@@@{\iflimits@\negintic@
  \mathop{\intic@\multintlimits@}\limits\else
  \multint@\nolimits\fi}\fi\ints@@@}%
\def\intkern@{\mathchoice{\!\!\!}{\!\!}{\!\!}{\!\!}}%
\def\plaincdots@{\mathinner{\cdotp\cdotp\cdotp}}%
\def\intdots@{\mathchoice{\plaincdots@}%
 {{\cdotp}\mkern1.5mu{\cdotp}\mkern1.5mu{\cdotp}}%
 {{\cdotp}\mkern1mu{\cdotp}\mkern1mu{\cdotp}}%
 {{\cdotp}\mkern1mu{\cdotp}\mkern1mu{\cdotp}}}%
\def\RIfM@{\relax\protect\ifmmode}
\def\text{\RIfM@\expandafter\text@\else\expandafter\mbox\fi}
\let\nfss@text\text
\def\text@#1{\mathchoice
   {\textdef@\displaystyle\f@size{#1}}%
   {\textdef@\textstyle\tf@size{\firstchoice@false #1}}%
   {\textdef@\textstyle\sf@size{\firstchoice@false #1}}%
   {\textdef@\textstyle \ssf@size{\firstchoice@false #1}}%
   \glb@settings}
\def\textdef@#1#2#3{\hbox{{%
                    \everymath{#1}%
                    \let\f@size#2\selectfont
                    #3}}}
\newif\iffirstchoice@
\def\Let@{\relax\iffalse{\fi\let\\=\cr\iffalse}\fi}%
\def\vspace@{\def\vspace##1{\crcr\noalign{\vskip##1\relax}}}%
\def\multilimits@{\bgroup\vspace@\Let@
 \baselineskip\fontdimen10 \scriptfont\tw@
 \advance\baselineskip\fontdimen12 \scriptfont\tw@
 \lineskip\thr@@\fontdimen8 \scriptfont\thr@@
 \lineskiplimit\lineskip
 \vbox\bgroup\ialign\bgroup\hfil$\m@th\scriptstyle{##}$\hfil\crcr}%
\def\Sb{_\multilimits@}%
\def\endSb{\crcr\egroup\egroup\egroup}%
\def\Sp{^\multilimits@}%
\newdimen\ex@
\def\rightarrowfill@#1{$#1\m@th\mathord-\mkern-6mu\cleaders
 \hbox{$#1\mkern-2mu\mathord-\mkern-2mu$}\hfill
 \mkern-6mu\mathord\rightarrow$}%
\def\leftarrowfill@#1{$#1\m@th\mathord\leftarrow\mkern-6mu\cleaders
 \hbox{$#1\mkern-2mu\mathord-\mkern-2mu$}\hfill\mkern-6mu\mathord-$}%
\def\leftrightarrowfill@#1{$#1\m@th\mathord\leftarrow
\mkern-6mu\cleaders
 \hbox{$#1\mkern-2mu\mathord-\mkern-2mu$}\hfill
 \mkern-6mu\mathord\rightarrow$}%
\def\overrightarrow{\mathpalette\overrightarrow@}%
\def\overrightarrow@#1#2{\vbox{\ialign{##\crcr\rightarrowfill@#1\crcr
 \noalign{\kern-\ex@\nointerlineskip}$\m@th\hfil#1#2\hfil$\crcr}}}%
\def\overleftarrow{\mathpalette\overleftarrow@}%
\def\overleftarrow@#1#2{\vbox{\ialign{##\crcr\leftarrowfill@#1\crcr
 \noalign{\kern-\ex@\nointerlineskip}$\m@th\hfil#1#2\hfil$\crcr}}}%
\def\overleftrightarrow{\mathpalette\overleftrightarrow@}%
\def\overleftrightarrow@#1#2{\vbox{\ialign{##\crcr
   \leftrightarrowfill@#1\crcr
 \noalign{\kern-\ex@\nointerlineskip}$\m@th\hfil#1#2\hfil$\crcr}}}%
\def\underrightarrow{\mathpalette\underrightarrow@}%
\def\underrightarrow@#1#2{\vtop{\ialign{##\crcr$\m@th\hfil#1#2\hfil
  $\crcr\noalign{\nointerlineskip}\rightarrowfill@#1\crcr}}}%
\def\underleftarrow{\mathpalette\underleftarrow@}%
\def\underleftarrow@#1#2{\vtop{\ialign{##\crcr$\m@th\hfil#1#2\hfil
  $\crcr\noalign{\nointerlineskip}\leftarrowfill@#1\crcr}}}%
\def\underleftrightarrow{\mathpalette\underleftrightarrow@}%
\def\underleftrightarrow@#1#2{\vtop{\ialign{##\crcr$\m@th
  \hfil#1#2\hfil$\crcr
 \noalign{\nointerlineskip}\leftrightarrowfill@#1\crcr}}}%
\def\qopnamewl@#1{\mathop{\operator@font#1}\nlimits@}
\let\nlimits@\displaylimits
\def\setboxz@h{\setbox\z@\hbox}
\def\varlim@#1#2{\mathop{\vtop{\ialign{##\crcr
 \hfil$#1\m@th\operator@font lim$\hfil\crcr
 \noalign{\nointerlineskip}#2#1\crcr
 \noalign{\nointerlineskip\kern-\ex@}\crcr}}}}
 \def\rightarrowfill@#1{\m@th\setboxz@h{$#1-$}\ht\z@\z@
  $#1\copy\z@\mkern-6mu\cleaders
  \hbox{$#1\mkern-2mu\box\z@\mkern-2mu$}\hfill
  \mkern-6mu\mathord\rightarrow$}
\def\leftarrowfill@#1{\m@th\setboxz@h{$#1-$}\ht\z@\z@
  $#1\mathord\leftarrow\mkern-6mu\cleaders
  \hbox{$#1\mkern-2mu\copy\z@\mkern-2mu$}\hfill
  \mkern-6mu\box\z@$}
\def\projlim{\qopnamewl@{proj\,lim}}
\def\injlim{\qopnamewl@{inj\,lim}}
\def\varinjlim{\mathpalette\varlim@\rightarrowfill@}
\def\varprojlim{\mathpalette\varlim@\leftarrowfill@}
\def\varliminf{\mathpalette\varliminf@{}}
\def\varliminf@#1{\mathop{\underline{\vrule\@depth.2\ex@\@width\z@
   \hbox{$#1\m@th\operator@font lim$}}}}
\def\varlimsup{\mathpalette\varlimsup@{}}
\def\varlimsup@#1{\mathop{\overline
  {\hbox{$#1\m@th\operator@font lim$}}}}
\def\align{\@verbatim \frenchspacing\@vobeyspaces \@alignverbatim
You are using the "align" environment in a style in which it is not defined.}
\let\csname endalign*\endcsname =\endtrivlist
\def\alignat{\@verbatim \frenchspacing\@vobeyspaces \@alignatverbatim
You are using the "alignat" environment in a style in which it is not defined.}
\let\csname endalignat*\endcsname =\endtrivlist
\def\xalignat{\@verbatim \frenchspacing\@vobeyspaces \@xalignatverbatim
You are using the "xalignat" environment in a style in which it is not defined.}
\let\csname endxalignat*\endcsname =\endtrivlist
\def\gather{\@verbatim \frenchspacing\@vobeyspaces \@gatherverbatim
You are using the "gather" environment in a style in which it is not defined.}
\let\csname endgather*\endcsname =\endtrivlist
\def\multiline{\@verbatim \frenchspacing\@vobeyspaces \@multilineverbatim
You are using the "multiline" environment in a style in which it is not defined.}
\let\csname endmultiline*\endcsname =\endtrivlist
\def\arrax{\@verbatim \frenchspacing\@vobeyspaces \@arraxverbatim
You are using a type of "array" construct that is only allowed in AmS-LaTeX.}
\def\tabulax{\@verbatim \frenchspacing\@vobeyspaces \@tabulaxverbatim
You are using a type of "tabular" construct that is only allowed in AmS-LaTeX.}
\let\csname endarrax*\endcsname =\endtrivlist
\let\csname endtabulax*\endcsname =\endtrivlist
 \def\endequation{%
     \ifmmode\ifinner 
      \iftag@
        \addtocounter{equation}{-1} 
        $\hfil
           \displaywidth\linewidth\@taggnum\egroup \endtrivlist
        \global\tag@false
        \global\@ignoretrue   
      \else
        $\hfil
           \displaywidth\linewidth\@eqnnum\egroup \endtrivlist
        \global\tag@false
        \global\@ignoretrue 
      \fi
     \else   
      \iftag@
        \addtocounter{equation}{-1} 
        \eqno \hbox{\@taggnum}
        \global\tag@false%
        $$\global\@ignoretrue
      \else
        \eqno \hbox{\@eqnnum}
        $$\global\@ignoretrue
      \fi
     \fi\fi
 } 
 \newif\iftag@ \tag@false
 \def\TCItag{\@ifnextchar*{\@TCItagstar}{\@TCItag}}
 \def\@TCItag#1{%
     \global\tag@true
     \global\def\@taggnum{(#1)}%
     \global\def\@currentlabel{#1}}
 \def\@TCItagstar*#1{%
     \global\tag@true
     \global\def\@taggnum{#1}%
     \global\def\@currentlabel{#1}}
     \def\tag{\@ifnextchar*{\@tagstar}{\@tag}}
     \def\@tag#1{%
         \global\tag@true
         \global\def\@taggnum{(#1)}}
     \def\@tagstar*#1{%
         \global\tag@true
         \global\def\@taggnum{#1}}
\def\Qlb#1{#1}
\def\Qcb#1{#1} 
\def\FRAME#1#2#3#4#5#6#7#8
\begin{document}

\title{Modified Laplacian coflow of $G_{2}$-structures on manifolds with
symmetry}
\author{Sergey Grigorian \\
University of Texas - Pan American\\
1201 W University Drive\\
Edinburg, TX 78539\\
USA}
\date{}
\maketitle

\begin{abstract}
We consider $G_{2}$-structures on $7$-manifolds that are warped products of
an interval and a six-manifold, which is either a Calabi-Yau manifold, or a
nearly K\"{a}hler manifold. We show that in these cases the $G_{2}$%
-structures are determined by their torsion components up to a phase factor.
We then study the modified Laplacian coflow $\frac{d\psi }{dt}=\Delta _{\psi
}\psi +2d\left( \left( C-\func{Tr}T\right) \varphi \right) $ of these $G_{2}$%
-structures, where $\varphi $ and $\psi $ are the fundamental $3$-form and $%
4 $-form which define the $G_{2}$-structure and $\Delta _{\psi }$ is the
Hodge Laplacian associated with the $G_{2}$-structure. This flow is known to
have short-time existence and uniqueness. We analyse the soliton equations
for this flow and obtain new compact soliton solutions.
\end{abstract}

\section{Introduction}

Geometric flows play a very important role in the study of various geometric
objects. Flows of $G_{2}$-structures on $7$-dimensional manifolds have been
first introduced by Robert Bryant in \cite{bryant-2003}. The original
Laplacian flow of $G_{2}$-structures was given by 
\begin{equation}
\frac{\partial \varphi }{\partial t}=\Delta _{\varphi }\varphi 
\label{lapflow1}
\end{equation}%
where $\varphi $ is the $3$-form that defines the $G_{2}$-structure (and
hence the metric), and $\Delta _{\varphi }=dd^{\ast }+d^{\ast }d$ is the
Hodge Laplacian associated with this $G_{2}$-structure. In general, this is
a non-parabolic, non-linear PDE for $\varphi $ \cite{GrigorianCoflow}.
However, if initially $\varphi $ is a \emph{closed}\textbf{\ }(or sometimes
known as \emph{calibrated}) $G_{2}$-structure, that is when $d\varphi =0$,
the flow (\ref{lapflow1}) becomes a flow of closed $G_{2}$-structures (since 
$\Delta _{\varphi }\varphi =dd^{\ast }\varphi $ becomes an exact form), and
acquires much nicer properties. This flow of closed $G_{2}$-structures can
then be interpreted as the gradient flow of Hitchin's volume functional \cite%
{Hitchin:2000jd-arxiv}. Also, as it has been shown in \cite{BryantXu,XuYe},
it has short-time existence and uniqueness, and moreover a stability
property \cite{XuYe}. There has also been work done on related flows of $%
G_{2}$-structures - such as heat flows by Weiss-Witt \cite%
{WeissWitt1,WeissWitt2} and a general overview of flows of $G_{2}$%
-structures by Karigiannis \cite{karigiannis-2007}, as well as the Laplacian
coflow of \emph{co-closed }$G_{2}$-structures, which was introduced by
Karigiannis-McKay-Tsui in \cite{KarigiannisMcKayTsui}. This was a Laplacian
flow $\frac{\partial \psi }{\partial t}=-\Delta _{\psi }\psi $ of the dual $4
$-form $\psi =\ast _{\varphi }\varphi $ which is now assumed to be closed,
so that $\Delta _{\psi }\psi $ is an exact form and preserves the closed
property of $\psi .$ It was later shown by the current author in \cite%
{GrigorianCoflow} that the Laplacian flow of co-closed $G_{2}$-structures is
not even a weakly parabolic flow, and in fact the symbol of the operator $%
\Delta _{\psi }\psi $ has a mixed signature. Therefore, a\emph{\ modified
Laplacian coflow} was introduced in \cite{GrigorianCoflow}:%
\begin{equation}
\frac{d\psi }{dt}=\Delta _{\psi }\psi +2d\left( \left( C-\func{Tr}T\right)
\varphi \right) .  \label{modcoflow}
\end{equation}%
Here $\func{Tr}T$ is the trace of the torsion tensor of the $G_{2}$%
-structure defined by the $4$-form $\psi $ and $C$ is any constant. Note
that if $\psi $ is co-closed, then the right hand side of (\ref{modcoflow})
is exact and hence the flow preserves the cohomology class of $\psi $ in the
same way as the original Laplacian coflow. The flow (\ref{modcoflow}) is
weakly parabolic in the direction of closed forms and short-time existence
and uniqueness of solutions was shown in \cite{GrigorianCoflow}.

In \cite{KarigiannisMcKayTsui} the authors have considered the behavior of
the original coflow on manifolds with symmetry - in particular, where the $7$%
-manifold is a warped product of 1-dimensional space $L$ (either a line
interval or a circle) and a six-dimensional space $N^{6}$ - which is either
a Calabi-Yau or a nearly K\"{a}hler manifold. As originally shown by Ivanov
and Cleyton in \cite{CleytonIvanovCurv}, $G_{2}$-structures on such warped
product manifolds always have a vanishing $14$-dimensional torsion
component, which excludes the possibility of them admitting a
non-torsion-free \emph{closed} $G_{2}$-structure. However, the $7$%
-dimensional torsion component can be set to zero, leaving only the
symmetric part of the torsion tensor, and this gives a co-closed $G_{2}$%
-structure. Therefore, it is natural to study flows of co-closed $G_{2}$%
-structures on these manifolds. In this case, the complicated PDEs from the
general case then become more manageable since the spatial part of the
equation reduces to a 1-dimensional problem on $L$. Furthermore, if soliton
solutions are considered, the equations reduce to a system of ODEs which can
be solved explicitly in some cases. In this paper we will produce a similar
analysis, but for the modified flow (\ref{modcoflow}).

The outline of the paper is following. In Section \ref{secg2struct} we give
an introduction to $G_{2}$-structures and torsion and in Section \ref%
{secsu3struct} we specialize to the case of the warped product manifold $%
N^{6}\times L$. We rederive the expression for the torsion components of a $%
G_{2}$-structure on such a product manifold and give an expression for the
full torsion tensor. In particular, we also show that generically, in this
case, the torsion components in fact determine the $G_{2}$-structure up to a phase factor. We
express the torsion in terms of parameters $\alpha ,\beta ,\gamma $ and show
how they give the $G_{2}$-structure. The parameters $\alpha $ and $\beta $
determine the $\mathbf{1\oplus 27}$ part of the torsion, while $\gamma $
determines the $\mathbf{7}$ part of the torsion. The restriction to
co-closed $G_{2}$-structures is then equivalent to setting $\gamma =0$. In
Section \ref{seclap}, we then express the Laplacian of the $G_{2}$-structure
in terms of $\alpha ,\beta ,\gamma $ and in Section \ref{secflow} we derive
the modified Laplacian coflow (\ref{modcoflow}) for the warped product $%
G_{2} $-structure. In the special case where $N^{6}$ is a Calabi-Yau
manifold, and when $C=0$ in (\ref{modcoflow}) we find explicit separable
solutions, which, as expected, blow up in finite time. Finally, in Section %
\ref{secsoliton}, we specialize to soliton solutions of (\ref{modcoflow}).
The equations that we obtain are a system of three nonlinear first order
ODEs, which are similar to the third order nonlinear ODE obtained in \cite%
{KarigiannisMcKayTsui} for the standard Laplacian coflow. However, due to
the extra freedom that we get by including the constant $C$ in (\ref%
{modcoflow}), we are able to obtain new non-trivial solutions. In
particular, in the case when $N^{6}$ is a Calabi-Yau manifold, we obtain
explicit solutions that are periodic and are thus defined when $L\cong
S^{1}. $ Hence these are compact soliton solutions. In the more complicated
case when $N^{6}$ is nearly K\"{a}hler, the equations are still very
difficult to analyze, however we systematically consider solutions where at
least one of the dependent variables is constant. This way we recover some
of the solutions given in \cite{KarigiannisMcKayTsui}, as well as new
solutions.

\section{$G_{2}$-structures and torsion}

\setcounter{equation}{0}\label{secg2struct}The 14-dimensional group $G_{2}$
is the smallest of the five exceptional Lie groups and is closely related to
the octonions. In particular, $G_{2}$ can be defined as the automorphism
group of the octonion algebra. Taking the imaginary part of octonion
multiplication of the imaginary octonions defines a vector cross product on $%
V=\mathbb{R}^{7}$ and the group that preserves the vector cross product is
precisely $G_{2}$. A more detailed account of the relationship between
octonions and $G_{2}$ can be found in \cite{BaezOcto, GrigorianG2Review}.The
structure constants of the vector cross product define a $3$-form on $%
\mathbb{R}^{7}$, hence $G_{2}$ can alternatively be defined as the subgroup
of $GL\left( 7,\mathbb{R}\right) $ that preserves a particular $3$-form \cite%
{Joycebook}. In general, given an $n$-dimensional manifold $M$, a $G$%
-structure on $M$ for some Lie subgroup $G$ of $GL\left( n,\mathbb{R}\right) 
$ is a reduction of the frame bundle $F$ over $M$ to a principal subbundle $%
P $ with fibre $G$. A $G_{2}$-structure is then a reduction of the frame
bundle on a $7$-dimensional manifold $M$ to a $G_{2}$ principal subbundle.
It turns out that there is a $1$-$1$ correspondence between $G_{2}$%
-structures on a $7$-manifold and smooth $3$-forms $\varphi $ for which the $%
7$-form-valued bilinear form $B_{\varphi }$ as defined by (\ref{Bphi}) is
positive definite (for more details, see \cite{Bryant-1987} and the arXiv
version of \cite{Hitchin:2000jd}). 
\begin{equation}
B_{\varphi }\left( u,v\right) =\frac{1}{6}\left( u\lrcorner \varphi \right)
\wedge \left( v\lrcorner \varphi \right) \wedge \varphi  \label{Bphi}
\end{equation}%
Here the symbol $\lrcorner $ denotes contraction of a vector with the
differential form: 
\[
\left( u\lrcorner \varphi \right) _{mn}=u^{a}\varphi _{amn}. 
\]%
Note that we will also use this symbol for contractions of differential
forms using the metric.

A smooth $3$-form $\varphi $ is said to be \emph{positive }if $B_{\varphi }$
is the tensor product of a positive-definite bilinear form and a
nowhere-vanishing $7$-form. In this case, it defines a unique metric $%
g_{\varphi }$ and volume form $\mathrm{vol}$ such that for vectors $u$ and $%
v $, the following holds 
\begin{equation}
g_{\varphi }\left( u,v\right) \mathrm{vol}=\frac{1}{6}\left( u\lrcorner
\varphi \right) \wedge \left( v\lrcorner \varphi \right) \wedge \varphi
\label{gphi}
\end{equation}

In components we can rewrite this as 
\begin{equation}
\left( g_{\varphi }\right) _{ab}=\left( \det s\right) ^{-\frac{1}{9}}s_{ab}\ 
\text{where \ }s_{ab}=\frac{1}{144}\varphi _{amn}\varphi _{bpq}\varphi _{rst}%
\hat{\varepsilon}^{mnpqrst}.  \label{metricdefdirect}
\end{equation}%
Here $\hat{\varepsilon}^{mnpqrst}$ is the alternating symbol with $\hat{%
\varepsilon}^{12...7}=+1$. Following Joyce (\cite{Joycebook}), we will adopt
the following definition

\begin{definition}
The pair $\left( \varphi ,g\right) $ for a positive $3$-form $\varphi $ and
corresponding metric $g$ defined by (\ref{gphi}) will be referred to as a $%
G_{2}$-structure.
\end{definition}

\begin{definition}
Given a $G_{2}$-structure $\left( \varphi ,g\right) $, define the Hodge star 
$\ast _{\varphi }$ that is associated with $\left( \varphi ,g\right) $, the
dual $4$-form $\psi =\ast _{\varphi }\varphi $ and the Laplacian $\Delta
_{\varphi }$
\end{definition}

Note that up to an overall sign of the orientation, a $G_{2}$-structure can
alternatively be defined using the $4$-form $\psi .$ In this case, we will
say that $\left( \psi ,g\right) $ is a $G_{2}$-structure giving the Hodge
star $\ast _{\psi }$ and the Laplacian $\Delta _{\psi }$.

Given a $G_{2}$-structure, the spaces of differential forms decompose
orthogonally according to irreducible representation of $G_{2}.$ In
particular, $2$-forms split as $\Lambda ^{2}=\Lambda _{7}^{2}\oplus \Lambda
_{14}^{2}$, where 
\begin{eqnarray*}
\Lambda _{7}^{2} &=&\left\{ \alpha \lrcorner \varphi \text{: for a vector
field }\alpha \right\} \\
\Lambda _{14}^{2} &=&\left\{ \omega \in \Lambda ^{2}\text{: }\left( \omega
_{ab}\right) \in \mathfrak{g}_{2}\right\} =\left\{ \omega \in \Lambda ^{2}%
\text{: }\omega \lrcorner \varphi =0\right\}
\end{eqnarray*}%
Using Hodge duality, a similar decomposition exists for $5$-forms.

The $3$-forms decompose as $\Lambda ^{3}=\Lambda _{1}^{3}\oplus \Lambda
_{7}^{3}\oplus \Lambda _{27}^{3}$, where the one-dimensional component
consists of forms proportional to $\varphi $, forms in the $7$-dimensional
component are defined by a vector field $\Lambda _{7}^{3}=\left\{ \alpha
\lrcorner \psi \text{: for a vector field }\alpha \right\} $, and forms in
the $27$-dimensional component are defined by traceless, symmetric matrices: 
\begin{equation}
\Lambda _{27}^{3}=\left\{ \chi \in \Lambda ^{3}:\chi _{abc}=\mathrm{i}%
_{\varphi }\left( h\right) =h_{[a}^{d}\varphi _{bc]d}^{{}}\text{ for }h_{ab}~%
\text{traceless, symmetric}\right\} .  \label{lam327}
\end{equation}%
Again, by Hodge duality a similar decomposition exists for $4$-forms. A
detailed description of these representations is given in \cite%
{Bryant-1987,bryant-2003}.

The \emph{intrinsic torsion }of a $G_{2}$-structure is defined by $\nabla
\varphi $, where $\nabla $ is the Levi-Civita connection for the metric $g$
that is defined by $\varphi $. Following \cite{karigiannis-2007}, it is easy
to see 
\begin{equation}
\nabla \varphi \in \Lambda _{7}^{1}\otimes \Lambda _{7}^{3}\cong W.
\label{torsphiW}
\end{equation}%
Here we define $W$ as the space $\Lambda _{7}^{1}\otimes \Lambda _{7}^{3}$.
Given (\ref{torsphiW}), we can write 
\begin{equation}
\nabla _{a}\varphi _{bcd}=T_{a}^{\ e}\psi _{ebcd}^{{}}  \label{codiffphi}
\end{equation}%
where $T_{ab}$ is the \emph{full torsion tensor}. We can also invert (\ref%
{codiffphi}) to get an explicit expression for $T$ 
\begin{equation}
T_{a}^{\ m}=\frac{1}{24}\left( \nabla _{a}\varphi _{bcd}\right) \psi ^{mbcd}.
\end{equation}%
This $2$-tensor fully defines $\nabla \varphi $ since pointwise, it has 49
components, and the space $W$ is also 49-dimensional (pointwise). In general
we can split $T_{ab}$ according to representations of $G_{2}$ into \emph{%
torsion components}: 
\begin{equation}
T=\tau _{1}g+\tau _{7}\lrcorner \varphi +\tau _{14}+\tau _{27}
\end{equation}%
where $\tau _{1}$ is a function, and gives the $\mathbf{1}$ component of $T$%
. We also have $\tau _{7}$, which is a $1$-form and hence gives the $\mathbf{%
7}$ component, and, $\tau _{14}\in \Lambda _{14}^{2}$ gives the $\mathbf{14}$
component and $\tau _{27}$ is traceless symmetric, giving the $\mathbf{27}$
component. Hence we can split $W$ as 
\begin{equation}
W=W_{1}\oplus W_{7}\oplus W_{14}\oplus W_{27}.
\end{equation}%
As it was originally shown by Fern\'{a}ndez and Gray \cite{FernandezGray},
there are in fact a total of 16 torsion classes of $G_{2}$-structures that
arise as the $G_{2}$-invariant subspaces of $W$ to which $\nabla \varphi $
belongs. Moreover, as shown in \cite{karigiannis-2007}, the torsion
components relate directly to the expression for $d\varphi $ and $d\psi $.
In fact, in our notation, 
\begin{subequations}%
\label{dptors} 
\begin{eqnarray}
d\varphi &=&4\tau _{1}\psi -3\tau _{7}\wedge \varphi -3\ast \mathrm{i}%
_{\varphi }\left( \tau _{27}\right)  \label{dphi} \\
d\psi &=&-4\tau _{7}\wedge \psi -2\ast \tau _{14}.  \label{dpsi}
\end{eqnarray}%
\end{subequations}%
Note that in the literature (\cite{bryant-2003,CleytonIvanovConf}, for
example) a slightly different convention for torsion components is sometimes
used. Our $\tau _{1}$ component corresponds to $\frac{1}{4}\tau _{0}$, $\tau
_{7}$ corresponds to $-\tau _{1}$ in their notation, $\tau _{14}$
corresponds to $\frac{1}{2}\tau _{2}$ and\ $\mathrm{i}_{\varphi }\left( \tau
_{27}\right) $ corresponds to $-\frac{1}{3}\tau _{3}$ . Similarly, our
torsion classes $W_{1}\oplus W_{7}\oplus W_{14}\oplus W_{27}$ correspond to $%
W_{0}\oplus W_{1}\oplus W_{2}\oplus W_{3}$. In our notation the subscripts
denote the dimensionally of the representation, while in the alternative
notation the subscripts denote the degree of the corresponding differential
form. Also the constant factors are different because we consider the $\tau
_{i}$ as components of the full torsion tensor $T$, while in the alternative
point of view they are regarded as components of the differential forms $%
d\varphi $ and $d\psi $.

\begin{definition}
A $G_{2}$-structure is said to be \emph{torsion-free} if $T=0.$
Equivalently, $d\varphi =0$ and $d\psi =0$ \cite{FernandezGray}.
\end{definition}

\begin{definition}
A $G_{2}$-structure is said to be \emph{closed }if $d\varphi =0.$
Equivalently, $T=\tau _{14}$.
\end{definition}

\begin{definition}
A $G_{2}$-structure is said to be \emph{co-closed }if $d\psi =0$.
Equivalently $T=\tau _{1}g+\tau _{27}$, that is, the skew-symmetric part of $%
T$ vanishes, and the tensor $T$ is thus fully symmetric.
\end{definition}

Note that sometimes closed and co-closed $G_{2}$-structures are called \emph{%
calibrated }and \emph{cocalibrated}, respectively.

\begin{example}
A special case of a co-closed $G_{2}$-structure occurs when $\tau _{27}=0$.
In this case, we have $d\varphi =4\tau _{1}\psi $ with $\tau _{1}$ constant. 
$G_{2}$-structures of this type are called \emph{nearly parallel}, and they
are Einstein manifolds, with $Ric=6\tau _{1}^{2}g$. In particular, the round
sphere $S^{7}$ admits a nearly parallel $G_{2}$-structure \cite%
{FriedrichNPG2sp}.
\end{example}

\section{$G_{2}$-structures on warped product manifolds}

\setcounter{equation}{0}\label{secsu3struct}Consider an $SU\left( 3\right) $%
-structure $\left( g_{6},\omega ,\Omega \right) $ on a $6$-manifold $N^{6}$
- where $g_{6}$ is a Riemannian metric, $\omega $ is a compatible Hermitian
form of type $\left( 1,1\right) $, and $\Omega $ is a nowhere vanishing
smooth complex-valued $3$-form of type $\left( 3,0\right) .$ The $SU\left(
3\right) $-structure forms $\omega $ and $\Omega $ satisfy algebraic
constraints 
\begin{subequations}%
\label{lombom} 
\begin{eqnarray}
\Omega \wedge \omega &=&\bar{\Omega}\wedge \omega =0 \\
\omega ^{3} &=&6\func{vol}_{6} \\
\Omega \wedge \bar{\Omega} &=&-8i\func{vol}_{6}
\end{eqnarray}%
\end{subequations}%
Since we are restricting our attention to Calabi-Yau and nearly K\"{a}hler $%
6 $-manifolds, the exterior derivatives of $\omega $ and $\Omega $ satisfy
the following relations 
\begin{subequations}%
\label{su3struct}%
\begin{eqnarray}
d\omega &=&-3\lambda \func{Re}\Omega \\
&=&-\frac{3}{2}\lambda \Omega -\frac{3}{2}\lambda \bar{\Omega}  \label{dlom}
\\
d\Omega &=&2\lambda i\omega ^{2}  \label{dbom} \\
d\left( \omega ^{2}\right) &=&0  \label{dlomsq}
\end{eqnarray}%
\end{subequations}
where $\lambda $ is a constant. Note that $\lambda =0$ corresponds to a
Calabi-Yau manifold, and generally we will set $\lambda =1$ for a nearly K%
\"{a}hler manifold. The Ricci curvature $Ric_{6}$ of a nearly K\"{a}hler
manifold of type $\lambda $ is given by 
\begin{equation}
Ric_{6}=5\lambda g_{6}  \label{NKRic}
\end{equation}%
More details on nearly K\"{a}hler manifolds are given in \cite%
{GrayNK,MoroianuNagySemmelmann}.

Now suppose $L$ is a $1$-dimensional manifold and consider $%
M^{7}=N^{6}\times L$ with $r$ a local coordinate on $L$. An induced $G_{2}$%
-structure on $M^{7}$ is given by 
\begin{subequations}%
\label{G2prod}%
\begin{eqnarray}
\varphi &=&\func{Re}\Omega +dr\wedge \omega \\
\psi &=&\frac{1}{2}\omega ^{2}+\func{Im}\Omega \wedge dr \\
g_{7} &=&dr^{2}+g_{6}
\end{eqnarray}

\end{subequations}%
More generally, let $F\left( r\right) $ be a smooth, nowhere-vanishing
complex-valued function on $L$ and let $G\left( r\right) $ be a smooth,
real, everywhere positive function on $L$. Then, following \cite%
{ChiossiSalamon, KarigiannisMcKayTsui} we get a warped product $G_{2}$%
-structure on $M^{7}$ given by 
\begin{subequations}%
\label{G2warpedstruct} 
\begin{eqnarray}
\varphi &=&\func{Re}\left( F^{3}\Omega \right) +G\left\vert F\right\vert
^{2}dr\wedge \omega \\
\psi &=&\frac{1}{2}\left\vert F\right\vert ^{4}\omega ^{2}+\func{Im}\left(
F^{3}\Omega \right) \wedge Gdr \\
g_{7} &=&G^{2}dr^{2}+\left\vert F\right\vert ^{2}g_{6} \\
\func{vol}_{7} &=&G\left\vert F\right\vert ^{6}\func{vol}_{6}\wedge dr
\end{eqnarray}%
\end{subequations}
We can write%
\begin{equation}
F=he^{i\frac{\theta }{3}}
\end{equation}%
Then, the $\varphi ,\psi ,$ $g_{7}$ and $\func{vol}_{7}$ from (\ref%
{G2warpedstruct}) can be rewritten as 
\begin{subequations}%
\label{G2warpedstruct2}%
\begin{eqnarray}
\varphi &=&\frac{1}{2}F^{3}\Omega +\frac{1}{2}\bar{F}^{3}\bar{\Omega}%
+Gh^{2}dr\wedge \omega  \label{g2phi} \\
\psi &=&\frac{1}{2}h^{4}\omega ^{2}-\frac{iGF^{3}}{2}\Omega \wedge dr+\frac{%
iG\bar{F}^{3}}{2}\bar{\Omega}\wedge dr \\
g_{7} &=&G^{2}dr^{2}+h^{2}g_{6}  \label{g7met} \\
\func{vol}_{7} &=&Gh^{6}\func{vol}_{6}\wedge dr  \label{vol7met}
\end{eqnarray}%
\end{subequations}
Using the expressions for the metric $g_{7}$ and the volume form $\func{vol}%
_{7}$, we obtain that if $\alpha $ is a $k$-form on $N^{6},$ then 
\begin{subequations}%
\label{warpedhstar} 
\begin{eqnarray}
\ast _{7}\alpha &=&\left( -1\right) ^{k}h^{6-2k}Gdr\wedge \ast _{6}\alpha \\
\ast _{7}\left( dr\wedge \alpha \right) &=&h^{6-2k}G^{-1}\ast _{6}\alpha
\end{eqnarray}%
\end{subequations}%
From these expression we obtain the following useful formulae.

\begin{corollary}[\protect\cite{KarigiannisMcKayTsui}]
Given the metric $g_{7}$ and the volume form $\func{vol}_{7}$ as in (\ref%
{g7met}) and (\ref{vol7met}), the Hodge duals of the $SU\left( 3\right) $%
-equivariant differential forms on $M^{7}$ are:%
\begin{subequations}%
\label{hodgeduals}%
\begin{eqnarray}
\ast _{7}\omega &=&\frac{1}{2}h^{2}Gdr\wedge \omega ^{2} \\
\ast _{7}\Omega &=&iGdr\wedge \Omega \\
\ast _{7}\left( \omega ^{2}\right) &=&2h^{-2}Gdr\wedge \omega \\
\ast _{7}\left( Gdr\wedge \omega \right) &=&\frac{1}{2}h^{2}\omega ^{2} \\
\ast _{7}\left( Gdr\wedge \Omega \right) &=&-i\Omega \\
\ast _{7}\left( Gdr\wedge \omega ^{2}\right) &=&2h^{-2}\omega
\end{eqnarray}%
\end{subequations}%
\end{corollary}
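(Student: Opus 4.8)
The plan is to combine the two general warped-product identities in (\ref{warpedhstar}) with the pointwise action of the six-dimensional Hodge star $\ast_{6}$ on the $SU(3)$-structure forms $\omega$, $\omega^{2}$ and $\Omega$. Once the three values
\[
\ast_{6}\omega=\frac{1}{2}\omega^{2},\qquad \ast_{6}\left(\omega^{2}\right)=2\omega,\qquad \ast_{6}\Omega=-i\Omega
\]
are in hand, each of the six formulae in (\ref{hodgeduals}) follows by a single substitution, once one reads off the power of $h$ and the sign $\left(-1\right)^{k}$ from the degree $k$ of the form on $N^{6}$.

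First I would record these $\ast_{6}$ values. At a point of $N^{6}$, the algebraic constraints (\ref{lombom}) guarantee an adapted orthonormal coframe $e^{1},\ldots,e^{6}$ in which $\omega=e^{12}+e^{34}+e^{56}$, $\func{vol}_{6}=e^{123456}$ and $\Omega=\left(e^{1}+ie^{2}\right)\wedge\left(e^{3}+ie^{4}\right)\wedge\left(e^{5}+ie^{6}\right)$; in particular the normalizations $\omega^{3}=6\func{vol}_{6}$ and $\Omega\wedge\bar{\Omega}=-8i\func{vol}_{6}$ are exactly what fixes the orientation of $N^{6}$ and the phase of $\Omega$. On decomposable forms $\ast_{6}$ is immediate: $\ast_{6}e^{12}=e^{3456}$ and cyclically, so $\ast_{6}\omega=\frac{1}{2}\omega^{2}$; and since $\omega^{2}=2\left(e^{1234}+e^{1256}+e^{3456}\right)$, also $\ast_{6}\left(\omega^{2}\right)=2\omega$. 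For $\Omega$ one expands into $\func{Re}\Omega$ and $\func{Im}\Omega$ as sums of four monomials each and computes $\ast_{6}$ termwise, obtaining $\ast_{6}\func{Re}\Omega=\func{Im}\Omega$ and $\ast_{6}\func{Im}\Omega=-\func{Re}\Omega$, i.e.\ $\ast_{6}\Omega=-i\Omega$. (Equivalently, $\Omega$ is of type $\left(3,0\right)$ for the almost complex structure determined by $\omega$, and $\ast$ acts as $-i$ on $\left(3,0\right)$-forms in real dimension six.)

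With this, the first identity in (\ref{warpedhstar}), applied to $\omega$ ($k=2$), $\Omega$ ($k=3$) and $\omega^{2}$ ($k=4$), gives $\ast_{7}\omega=h^{2}G\,dr\wedge\ast_{6}\omega=\frac{1}{2}h^{2}G\,dr\wedge\omega^{2}$, $\ast_{7}\Omega=-G\,dr\wedge\ast_{6}\Omega=iG\,dr\wedge\Omega$ and $\ast_{7}\left(\omega^{2}\right)=h^{-2}G\,dr\wedge\ast_{6}\left(\omega^{2}\right)=2h^{-2}G\,dr\wedge\omega$. The second identity in (\ref{warpedhstar}), applied to $G\,dr\wedge\alpha$ with $\alpha=\omega,\Omega,\omega^{2}$ in turn, gives $\ast_{7}\left(G\,dr\wedge\omega\right)=h^{2}\ast_{6}\omega=\frac{1}{2}h^{2}\omega^{2}$, $\ast_{7}\left(G\,dr\wedge\Omega\right)=\ast_{6}\Omega=-i\Omega$ and $\ast_{7}\left(G\,dr\wedge\omega^{2}\right)=2h^{-2}\omega$; alternatively the last three follow from the first three by applying $\ast_{7}$ once more and using $\ast_{7}^{2}=\pm\,\mathrm{id}$ on forms of the relevant degree.

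The computation is entirely routine once the $\ast_{6}$ values are known, so the only point demanding care is the sign in $\ast_{6}\Omega=-i\Omega$: it depends on the orientation of $N^{6}$ and on the phase of $\Omega$, and is forced only once both are taken compatible with the normalizations in (\ref{lombom}). Everything else is bookkeeping of the factors $h$, $G$ and $\left(-1\right)^{k}$.
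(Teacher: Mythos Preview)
Your proof is correct and follows exactly the route the paper indicates: the corollary is stated immediately after the warped-product identities (\ref{warpedhstar}) with the remark ``From these expressions we obtain the following useful formulae,'' and you have simply filled in the implicit step of computing $\ast_{6}\omega=\tfrac{1}{2}\omega^{2}$, $\ast_{6}\omega^{2}=2\omega$, $\ast_{6}\Omega=-i\Omega$ in an adapted coframe before substituting into (\ref{warpedhstar}). Your care with the sign in $\ast_{6}\Omega=-i\Omega$ via the orientation fixed by (\ref{lombom}) is exactly the point that needs checking, and you have it right.
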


As noted in \cite{KarigiannisMcKayTsui}, we can always redefine the $r$
coordinate in order to set $G=1\,.$ However when looking at a flow of $G_{2}$%
-structures, the function $G\left( r\right) $ will be time-dependent and
hence the reparametrization of $r$. Therefore, in a time-dependent picture
it is convenient to keep $G\left( r\right) $ unrestricted.

Any $3$-form $\chi $ on $M^{7}$ that respects the symmetry of the manifold
must be a linear combination of $\Omega $, $\bar{\Omega}$ and $dr\wedge
\omega $. Therefore, in general, we can write such a $3$-form as 
\begin{equation}
\chi =\frac{1}{2}AF^{3}\Omega +\frac{1}{2}\bar{A}\bar{F}^{3}\bar{\Omega}%
+Gh^{2}Bdr\wedge \omega  \label{gen3form}
\end{equation}%
where $A$ is a smooth complex-valued function on $L$ and $B$ is a smooth
real-valued function on $L.$ Hence, such $3$-forms are uniquely defined by
three real-valued functions on $L$: $\func{Re}A,$ $\func{Im}A$ and $B$. For
a $3$-form given by (\ref{gen3form}), let us use the following notation 
\begin{subequations}%
\label{chinotation} 
\begin{eqnarray}
\func{Re}_{1}\chi &=&\func{Re}A \\
\func{Im}_{1}\chi &=&\func{Im}A \\
\func{Re}_{2}\chi &=&B
\end{eqnarray}%
\end{subequations}
Such a decomposition of $\chi $ effectively gives a decomposition according
to representations of $SU\left( 3\right) $ using the underlying $SU\left(
3\right) $-structure, and in many cases it will be more convenient to use
this decomposition rather than the decomposition according representations
of $G_{2}$ that comes from the $G_{2}$-structure (\ref{G2warpedstruct2}).
However, both will play a role, and it will be necessary to convert between
the two pictures. The $G_{2}$-decomposition of $\chi $ is given by \cite%
{bryant-2003, GrigorianCoflow, karigiannis-2007}:%
\begin{equation}
\chi =X\lrcorner \psi +\mathrm{i}_{\varphi }\left( s\right)  \label{chirep}
\end{equation}%
where $X$ is a vector field given by 
\begin{equation}
X^{\flat }=\frac{1}{4}\ast _{7}\left( \chi \wedge \varphi \right)
\label{Xvect}
\end{equation}%
and which defines the $\Lambda _{7}^{3}$ component of $\chi $, and $s$ is a
symmetric $2$-tensor. The trace part of $s$ gives the $\Lambda _{1}^{3}$
component of $\chi $ and the traceless part defines the $\Lambda _{27}^{3}$
component. The trace of $h$ is given by 
\begin{equation}
\func{Tr}h=\ast _{7}\left( \chi \wedge \psi \right)  \label{trh}
\end{equation}

\begin{proposition}
\label{PropChiG2cpts}Suppose $\chi $ is an $SU\left( 3\right) $-equivariant $%
3$-form on $M^{7}.$ Then using the notation in (\ref{chinotation}), the $%
G_{2}$-decomposition of $\chi $ is $\chi =X\lrcorner \psi +\mathrm{i}%
_{\varphi }\left( s\right) $ where%
\begin{subequations}%
\label{chicpts}%
\begin{eqnarray}
X &=&\left( \func{Im}_{1}\chi \right) G^{-1}\frac{\partial }{\partial r} \\
s &=&\left( 3\func{Re}_{2}\chi -2\func{Re}_{1}\chi \right)
G^{2}dr^{2}+\left( \func{Re}_{1}\chi \right) h^{2}g_{6} \\
\func{Tr}s &=&3\func{Re}_{2}\chi +4\func{Re}_{1}\chi
\end{eqnarray}%
\end{subequations}%
Also note that $s$ with one raised index, denoted by $s^{\sharp }$,\ is
given by 
\begin{equation}
s^{\sharp }=\func{diag}\left( \func{Re}_{2}\chi -2\func{Re}_{1}\chi ,\left( 
\func{Re}_{1}\chi \right) \delta _{6}\right)  \label{hsharp}
\end{equation}
\end{proposition}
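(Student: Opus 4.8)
The plan is to use the $SU(3)$-equivariance of $\chi $ to cut the problem down to three unknown functions on $L$, and then to determine them from the three inversion formulas already at our disposal: $X^{\flat }=\frac{1}{4}\ast _{7}(\chi \wedge \varphi )$ from $\left( \ref{Xvect}\right) $, $\func{Tr}s=\ast _{7}(\chi \wedge \psi )$ from $\left( \ref{trh}\right) $, and the defining identity $\chi =X\lrcorner \psi +\mathrm{i}_{\varphi }(s)$ of $\left( \ref{chirep}\right) $. First I would note that $\varphi $, $\psi $, $g_{7}$ are all $SU(3)$-invariant and that the assignments $\chi \mapsto X$ and $\chi \mapsto s$ are built naturally from $\chi $ and the $G_{2}$-structure; hence $X$ is an $SU(3)$-invariant vector field and $s$ an $SU(3)$-invariant symmetric $2$-tensor on $M^{7}=N^{6}\times L$. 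Since $\mathbb{R}^{6}$ carries no nonzero $SU(3)$-invariant vector and its only invariant symmetric $2$-tensor is a multiple of $g_{6}$, this forces $X=t\,G^{-1}\partial /\partial r$ and $s=p\,G^{2}dr^{2}+q\,h^{2}g_{6}$ for functions $t,p,q$ on $L$, and it remains only to identify these three.

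Next I would compute $t$ from $\left( \ref{Xvect}\right) $. Expanding $\chi \wedge \varphi $ using the general form $\left( \ref{gen3form}\right) $ of $\chi $, the expression $\left( \ref{g2phi}\right) $ for $\varphi $, and the algebraic $SU(3)$ relations $\left( \ref{lombom}\right) $, all products vanish except those proportional to $\Omega \wedge \bar{\Omega}=-8i\func{vol}_{6}$ (since $\Omega \wedge \omega =0$ and $dr\wedge dr=0$), so $\chi \wedge \varphi $ is an explicit multiple of $\func{vol}_{6}$. Applying $\ast _{7}$ via $\left( \ref{warpedhstar}\right) $ and raising the index with $g_{7}$ gives $X=(\func{Im}_{1}\chi )\,G^{-1}\partial /\partial r$. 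The same kind of computation for $\chi \wedge \psi $, now using in addition $\omega ^{3}=6\func{vol}_{6}$, produces $\chi \wedge \psi =(4\func{Re}_{1}\chi +3\func{Re}_{2}\chi )\func{vol}_{7}$, and $\left( \ref{trh}\right) $ then yields $\func{Tr}s=4\func{Re}_{1}\chi +3\func{Re}_{2}\chi $, i.e.\ the linear relation $p+6q=4\func{Re}_{1}\chi +3\func{Re}_{2}\chi $.

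For the last relation I would substitute the ansatz back into $\chi =X\lrcorner \psi +\mathrm{i}_{\varphi }(s)$ and read off the coefficient of $F^{3}\Omega $. On one side, contracting $\psi $ from $\left( \ref{g2phi}\right) $ with $\partial /\partial r$ leaves only the $\Omega \wedge dr$ and $\bar{\Omega}\wedge dr$ pieces, so $X\lrcorner \psi =\tfrac{i}{2}(\func{Im}_{1}\chi )(F^{3}\Omega -\bar{F}^{3}\bar{\Omega})$; on the other side, writing $s=q\,g_{7}+(p-q)\,G^{2}dr^{2}$ and using $\mathrm{i}_{\varphi }(g_{7})=\varphi $ together with the elementary identity $(G\,dr)\lrcorner \varphi =h^{2}\omega $ (which shows $\mathrm{i}_{\varphi }(G^{2}dr^{2})$ is a pure $dr\wedge \omega $ term with no $\Omega $-component), the coefficient of $F^{3}\Omega $ in $\mathrm{i}_{\varphi }(s)$ is just $\tfrac{1}{2}q$. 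Matching the coefficients of $F^{3}\Omega $ then gives $q=\func{Re}_{1}\chi $, and combined with the trace relation this forces $p=3\func{Re}_{2}\chi -2\func{Re}_{1}\chi $. These are exactly the formulas claimed for $s$ and $\func{Tr}s$, and raising one index of $s$ with $g_{7}$ (which is $G^{-2}$ in the $r$-direction and $h^{-2}g_{6}^{-1}$ along $N^{6}$) gives the stated $s^{\sharp }$.

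I expect the main obstacle to be purely the bookkeeping of signs and normalizations rather than anything structural: the orientation convention hidden in $\func{vol}_{7}=Gh^{6}\func{vol}_{6}\wedge dr$, the $(-1)^{k}$ factors in $\left( \ref{warpedhstar}\right) $, the interior-product sign rule (e.g.\ $u\lrcorner (\Omega \wedge dr)=-\Omega $ for $u$ tangent to $L$), and in particular the normalization of $\mathrm{i}_{\varphi }$ implicit in the bracketed definition $h_{[a}^{d}\varphi _{bc]d}$. A good internal consistency check is that the direct value of $\mathrm{i}_{\varphi }(G^{2}dr^{2})$ must be compatible with the independently derived trace relation $p+6q=4\func{Re}_{1}\chi +3\func{Re}_{2}\chi $; any disagreement would signal a mishandled orientation or a wrong normalization of $\mathrm{i}_{\varphi }$.
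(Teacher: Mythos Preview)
Your proposal is correct and follows essentially the same route as the paper: compute $X$ from $\chi\wedge\varphi$, compute $\func{Tr}s$ from $\chi\wedge\psi$, and use $SU(3)$-invariance to reduce $s$ to two scalars, the remaining one being fixed by the observation that only the $g_{6}$-part of $s$ contributes to the $\Omega$-component of $\mathrm{i}_{\varphi}(s)$. Your execution of this last step---splitting $s=q\,g_{7}+(p-q)\,G^{2}dr^{2}$, invoking $\mathrm{i}_{\varphi}(g_{7})=\varphi$, and reading off the $F^{3}\Omega$-coefficient directly---is slightly more streamlined than the paper's version, which instead writes $s+\tfrac{1}{2}(\func{Tr}s)g_{7}$ with three undetermined constants and solves for them via the trace identity together with the same structural observation.
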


\begin{proof}
Let 
\[
\chi =\frac{1}{2}AF^{3}\Omega +\frac{1}{2}\bar{A}\bar{F}^{3}\bar{\Omega}%
+Gh^{2}Bdr\wedge \omega 
\]%
First let us find $\func{Tr}s$ using (\ref{trh}). We thus have 
\begin{eqnarray*}
\chi \wedge \psi &=&\left( \frac{1}{2}AF^{3}\Omega +\frac{1}{2}\bar{A}\bar{F}%
^{3}\bar{\Omega}+Gh^{2}Bdr\wedge \omega \right) \wedge \\
&&\left( \frac{1}{2}h^{4}\omega ^{2}-\frac{iGF^{3}}{2}\Omega \wedge dr+\frac{%
iG\bar{F}^{3}}{2}\bar{\Omega}\wedge dr\right) \\
&=&\frac{1}{4}iGAh^{6}\Omega \wedge \bar{\Omega}\wedge dr-\frac{1}{4}iG\bar{A%
}h^{6}\bar{\Omega}\wedge \Omega \wedge dr+\frac{1}{2}Gh^{6}Bdr\wedge \omega
^{3} \\
&=&2GAh^{6}\func{vol}_{6}\wedge dr+2G\bar{A}h^{6}\func{vol}_{6}\wedge
dr+3Gh^{6}B\func{vol}_{6}\wedge dr \\
&=&\left( 3B+2\left( A+\bar{A}\right) \right) \func{vol}_{7}
\end{eqnarray*}%
where we have used the properties (\ref{lombom}). Hence indeed, 
\begin{eqnarray*}
\func{Tr}s &=&3B+2\left( A+\bar{A}\right) \\
&=&3\func{Re}_{2}\chi +4\func{Re}_{1}\chi .
\end{eqnarray*}%
Now work out $X$ using (\ref{Xvect}). Working out $\chi \wedge \varphi $
using (\ref{lombom}) we get%
\begin{eqnarray*}
\chi \wedge \varphi &=&\left( \frac{1}{2}AF^{3}\Omega +\frac{1}{2}\bar{A}%
\bar{F}^{3}\bar{\Omega}+Gh^{2}Bdr\wedge \omega \right) \wedge \\
&&\left( \frac{1}{2}F^{3}\Omega +\frac{1}{2}\bar{F}^{3}\bar{\Omega}%
+Gh^{2}dr\wedge \omega \right) \\
&=&\frac{1}{4}Ah^{6}\Omega \wedge \bar{\Omega}+\frac{1}{4}\bar{A}h^{6}\bar{%
\Omega}\wedge \Omega \\
&=&\left( -2i\left( A-\bar{A}\right) \right) h^{6}\func{vol}_{6}
\end{eqnarray*}%
Take the Hodge star:%
\begin{eqnarray*}
\ast _{7}\left( \chi \wedge \varphi \right) &=&\left( -2i\left( A-\bar{A}%
\right) \right) Gdr \\
&=&4\left( \func{Im}A\right) Gdr
\end{eqnarray*}%
Thus, indeed, 
\begin{eqnarray*}
X &=&\frac{1}{4}\left( \ast _{7}\left( \chi \wedge \varphi \right) \right)
^{\sharp } \\
&=&\left( \func{Im}A\right) G^{-1}\frac{\partial }{\partial r} \\
&=&\left( \func{Im}_{1}\chi \right) G^{-1}\frac{\partial }{\partial r}
\end{eqnarray*}%
To find $h$, consider the projection of $\chi $ onto $\Lambda _{1}^{3}\oplus
\Lambda _{27}^{2}:$%
\[
\pi _{1\oplus 27}\chi =\chi -X\lrcorner \psi . 
\]%
Thus, 
\begin{eqnarray*}
X\lrcorner \psi &=&\left( \func{Im}A\right) G^{-1}\left( \frac{\partial }{%
\partial r}\lrcorner \psi \right) \\
&=&\frac{1}{2}i\left( \func{Im}A\right) F^{3}\Omega -\frac{1}{2}i\left( 
\func{Im}A\right) \bar{F}^{3}\bar{\Omega}
\end{eqnarray*}%
Therefore, 
\begin{eqnarray*}
\pi _{1\oplus 27}\chi &=&\chi -X\lrcorner \psi =\frac{1}{2}\left( \func{Re}%
A+i\func{Im}A\right) F^{3}\Omega +\frac{1}{2}\left( \func{Re}A-i\func{Im}%
A\right) \bar{F}^{3}\bar{\Omega}+Gh^{2}Bdr\wedge \omega \\
&&-\left( \frac{1}{2}i\left( \func{Im}A\right) F^{3}\Omega -\frac{1}{2}%
i\left( \func{Im}A\right) \bar{F}^{3}\bar{\Omega}\right) \\
&=&\frac{1}{2}\left( \func{Re}A\right) F^{3}\Omega +\frac{1}{2}\left( \func{%
Re}A\right) \bar{F}^{3}\bar{\Omega}+Gh^{2}Bdr\wedge \omega
\end{eqnarray*}%
Now assume without loss of generality that $\pi _{7}\chi =0$, so that $A=%
\func{Re}A$. Also recall that \cite{GrigorianG2Torsion1} 
\[
\chi _{bc(a}\varphi _{d)}^{\ bc}=\frac{4}{3}h_{ad}+\frac{2}{3}\left( \func{Tr%
}h\right) g_{ad} 
\]%
Comparing $\chi $ and $\varphi $ we see that the only non-zero contractions
that involve $\func{Re}A$ will be proportional to $g_{6}$, while the only
non-zero contractions that are proportional to $dr^{2}$ only involve $B.$
Moreover, since $s$ is real, we can in general write%
\[
s+\frac{1}{2}\left( \func{Tr}s\right) g_{7}=c_{2}BG^{2}dr^{2}+\left( c_{1}%
\func{Re}A+c_{3}B\right) h^{2}g_{6} 
\]%
for some constants $c_{1},c_{2},c_{3}$. We know however that 
\begin{equation}
\func{Tr}s=3B+4\func{Re}A  \label{trs1}
\end{equation}%
Hence, 
\begin{eqnarray}
s &=&c_{2}BG^{2}dr^{2}+\left( c_{1}\func{Re}A+c_{3}B\right) h^{2}g_{6}-\frac{%
1}{2}\left( 3B+4\func{Re}A\right) g_{7}  \nonumber \\
&=&\left( \left( c_{2}-\frac{3}{2}\right) B-2\func{Re}A\right)
G^{2}dr^{2}+\left( \left( c_{1}-2\right) \func{Re}A+\left( c_{3}-\frac{3}{2}%
\right) B\right) h^{2}g_{6}  \label{chis1}
\end{eqnarray}%
However we also have 
\[
\chi _{abc}=s_{[a}^{\ d}\varphi _{\left\vert d\right\vert bc]} 
\]%
Note that the $\Omega $ and $\bar{\Omega}$ terms in $\chi $ are obtained
from contraction of the $g_{6}$ term in $s$ with the $\Omega $ and $\bar{%
\Omega}$ terms in $\varphi $. Therefore, the factor in front of $g_{6}$ in $%
s $ must be independent of $B$. Thus, $c_{3}=\frac{3}{2}$. Using this, we
take the trace of (\ref{chis1}), and obtain 
\begin{eqnarray}
\func{Tr}s &=&\left( \left( c_{2}-\frac{3}{2}\right) B-2\func{Re}A\right)
+6\left( c_{1}-2\right) \func{Re}A  \nonumber \\
&=&\left( c_{2}-\frac{3}{2}\right) B+\left( 6c_{1}-14\right) \func{Re}A
\label{trs2}
\end{eqnarray}
Comparing coefficients of $B$ and $\func{Re}A$ in (\ref{trs1}) and (\ref%
{trs2}) we conclude that 
\begin{eqnarray*}
c_{1} &=&3 \\
c_{2} &=&\frac{9}{2}
\end{eqnarray*}%
Therefore, indeed, 
\[
s=\left( 3B-2\func{Re}A\right) G^{2}dr^{2}+\left( \func{Re}A\right)
h^{2}g_{6} 
\]
\end{proof}

Now given the $3$-form$\chi $ (\ref{gen3form}), work out $d\chi $ and $\ast
d\chi .$

\begin{proposition}
\label{Propdchi}Suppose $\chi $ is an $SU\left( 3\right) $-equivariant $3$%
-form on $M^{7}$ given by (\ref{gen3form}). Then using the notation in (\ref%
{chinotation}), the components of $\ast d\chi $ are 
\begin{subequations}%
\label{stdchicpts}%
\begin{eqnarray}
\func{Re}_{1}\left( \ast d\chi \right) &=&G^{-1}\left( \func{Im}A^{\prime
}+3h^{-1}h^{\prime }\func{Im}A+\theta ^{\prime }\func{Re}A-3\lambda
BGh^{-1}\sin \theta \right) \\
\func{Im}_{1}\left( \ast d\chi \right) &=&G^{-1}\left( -\func{Re}A^{\prime
}-3h^{-1}h^{\prime }\func{Re}A+\theta ^{\prime }\func{Im}A-3\lambda
BGh^{-1}\cos \theta \right) \\
\func{Re}_{2}\left( \ast d\chi \right) &=&-4\lambda h^{-1}\left( \sin \theta 
\func{Re}A+\cos \theta \func{Im}A\right)
\end{eqnarray}%
\end{subequations}
where $^{\prime }$ denotes differentiation with respect to $r$.
\end{proposition}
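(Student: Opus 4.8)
The plan is to compute $d\chi$ directly from the explicit formula (\ref{gen3form}), re-express it as a combination of the three basic $SU(3)$-equivariant $4$-forms $dr\wedge\Omega$, $dr\wedge\bar\Omega$ and $\omega^{2}$, apply $\ast_{7}$ using the Hodge duals in (\ref{hodgeduals}), and finally read off the three real components via the notation (\ref{chinotation}).

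First I would record that $F^{3}=h^{3}e^{i\theta}$, so $(F^{3})^{\prime}=F^{3}\left(3h^{\prime}/h+i\theta^{\prime}\right)$ and hence $(AF^{3})^{\prime}=F^{3}P$ with $P:=A^{\prime}+3(h^{\prime}/h)A+i\theta^{\prime}A$. Differentiating (\ref{gen3form}) term by term, using that $A,B,h,G,\theta$ depend on $r$ alone, substituting $d\Omega=2\lambda i\omega^{2}$ and $d\omega=-\tfrac{3}{2}\lambda(\Omega+\bar\Omega)$ from (\ref{su3struct}), and using $d(dr\wedge\omega)=-dr\wedge d\omega$, one obtains
\[
d\chi=\left(\tfrac{1}{2}F^{3}P+\tfrac{3}{2}\lambda Gh^{2}B\right)dr\wedge\Omega+\left(\tfrac{1}{2}\bar F^{3}\bar P+\tfrac{3}{2}\lambda Gh^{2}B\right)dr\wedge\bar\Omega-2\lambda\,\func{Im}\!\left(AF^{3}\right)\omega^{2},
\]
where the $\omega^{2}$ coefficient is simplified using $AF^{3}-\bar A\bar F^{3}=2i\,\func{Im}(AF^{3})$ and $\func{Im}(AF^{3})=h^{3}(\sin\theta\,\func{Re}A+\cos\theta\,\func{Im}A)$. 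The one place where a sign slip is easy is $d(dr\wedge\omega)=-dr\wedge d\omega$, so I would double-check that.

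Next, from (\ref{hodgeduals}) we have $\ast_{7}(Gdr\wedge\Omega)=-i\Omega$, hence $\ast_{7}(dr\wedge\Omega)=-iG^{-1}\Omega$; since the Hodge star is real, $\ast_{7}(dr\wedge\bar\Omega)=iG^{-1}\bar\Omega$; and $\ast_{7}(\omega^{2})=2h^{-2}Gdr\wedge\omega$. Substituting and writing the result in the normal form $\ast d\chi=\tfrac{1}{2}\tilde A F^{3}\Omega+\tfrac{1}{2}\bar{\tilde A}\bar F^{3}\bar\Omega+Gh^{2}\tilde B\,dr\wedge\omega$ (legitimate since $\ast d\chi$ is again an $SU(3)$-equivariant $3$-form), a short computation gives $\tilde A=-iG^{-1}P-3i\lambda h^{-1}Be^{-i\theta}$ and $\tilde B=-4\lambda h^{-1}(\sin\theta\,\func{Re}A+\cos\theta\,\func{Im}A)$; the latter is already the stated formula for $\func{Re}_{2}(\ast d\chi)$. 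Finally, expanding $P$ and $e^{-i\theta}=\cos\theta-i\sin\theta$ into real and imaginary parts and using $\func{Re}(-iz)=\func{Im}z$, $\func{Im}(-iz)=-\func{Re}z$ yields $\func{Re}_{1}(\ast d\chi)=\func{Re}\tilde A$ and $\func{Im}_{1}(\ast d\chi)=\func{Im}\tilde A$; collecting the constant $B$-terms under an overall factor $G^{-1}$ reproduces (\ref{stdchicpts}) exactly.

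Overall the argument is routine bookkeeping and I do not expect any genuinely hard step. The only potential pitfalls are: (i) the sign in $d(dr\wedge\omega)$; (ii) carrying the factor $G^{-1}$ correctly when translating between the complex coefficient $\tilde A$ and the real components $\func{Re}_{1},\func{Im}_{1}$ — the $B$-terms in (\ref{stdchicpts}) display a visible $G$ inside the parenthesis precisely because the whole bracket is multiplied by $G^{-1}$; and (iii) the orientation-dependent sign in $\ast_{7}(Gdr\wedge\Omega)=-i\Omega$, which I would simply quote from the Corollary rather than rederive.
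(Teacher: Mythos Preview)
Your proposal is correct and follows essentially the same route as the paper's own proof: both compute $d\chi$ directly from (\ref{gen3form}) using the structure equations (\ref{su3struct}), apply the Hodge star via (\ref{hodgeduals}), and read off the three components. Your introduction of the shorthand $P=A'+3(h'/h)A+i\theta'A$ streamlines the bookkeeping slightly compared to the paper, which instead factors out $F^{3}$ after taking the Hodge star and uses $\frac{(F^{3})'\bar F^{3}}{h^{6}}=3h^{-1}h'+i\theta'$, but the computations are otherwise identical.
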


\begin{proof}
Using the $SU\left( 3\right) $-structure properties (\ref{su3struct}), we
have 
\begin{eqnarray*}
d\chi &=&\frac{1}{2}\left( AF^{3}\right) ^{\prime }dr\wedge \Omega +\frac{1}{%
2}\left( \bar{A}\bar{F}^{3}\right) ^{\prime }dr\wedge \bar{\Omega}+\frac{1}{2%
}AF^{3}d\Omega +\frac{1}{2}\bar{A}\bar{F}^{3}d\bar{\Omega}-BGh^{2}dr\wedge
d\omega \\
&=&\frac{1}{2}\left( A^{\prime }F^{3}+A\left( F^{3}\right) ^{\prime
}+3\lambda BGh^{2}\right) dr\wedge \Omega +\frac{1}{2}\left( \bar{A}^{\prime
}\bar{F}^{3}+\bar{A}\left( \bar{F}^{3}\right) ^{\prime }+\frac{3}{2}\lambda
BGh^{2}\right) dr\wedge \bar{\Omega} \\
&&+\lambda i\left( AF^{3}-\bar{A}\bar{F}^{3}\right) \omega ^{2}
\end{eqnarray*}%
Taking the Hodge star, and using (\ref{hodgeduals}) we obtain 
\begin{eqnarray*}
\ast d\chi &=&-\frac{1}{2}iG^{-1}\left( A^{\prime }F^{3}+A\left(
F^{3}\right) ^{\prime }+3\lambda BGh^{2}\right) \Omega +\frac{1}{2}%
iG^{-1}\left( \bar{A}^{\prime }\bar{F}^{3}+\bar{A}\left( \bar{F}^{3}\right)
^{\prime }+\frac{3}{2}\lambda BGh^{2}\right) \bar{\Omega} \\
&&+2\lambda ih^{-2}G\left( AF^{3}-\bar{A}\bar{F}^{3}\right) dr\wedge \omega
\\
&=&-\frac{1}{2}iG^{-1}\left( A^{\prime }+A\frac{\left( F^{3}\right) ^{\prime
}\bar{F}^{3}}{h^{6}}+3\lambda BGh^{-4}\bar{F}^{3}\right) F^{3}\Omega \\
&&+\frac{1}{2}iG^{-1}\left( \bar{A}^{\prime }+\bar{A}\frac{\left( \bar{F}%
^{3}\right) ^{\prime }F^{3}}{h^{6}}+3\lambda BGh^{-4}F^{3}\right) \bar{F}^{3}%
\bar{\Omega} \\
&&+2\lambda ih^{-2}G\left( AF^{3}-\bar{A}\bar{F}^{3}\right) dr\wedge \omega
\end{eqnarray*}%
Note that 
\[
\frac{\left( F^{3}\right) ^{\prime }\bar{F}^{3}}{h^{6}}=3h^{-1}h^{\prime
}+i\theta ^{\prime } 
\]%
So, 
\begin{eqnarray*}
\ast d\chi &=&\frac{1}{2}G^{-1}\left( -iA^{\prime }-3iAh^{-1}h^{\prime
}+A\theta ^{\prime }-3i\lambda BGh^{-4}\bar{F}^{3}\right) F^{3}\Omega \\
&&+\frac{1}{2}G^{-1}\left( i\bar{A}^{\prime }+3i\bar{A}h^{-1}h^{\prime }+%
\bar{A}\theta ^{\prime }+3i\lambda BGh^{-4}F^{3}\right) \bar{F}^{3}\bar{%
\Omega} \\
&&+2\lambda ih^{-2}G\left( AF^{3}-\bar{A}\bar{F}^{3}\right) dr\wedge \omega
\end{eqnarray*}%
Thus, 
\begin{eqnarray*}
\func{Re}_{1}\left( \ast d\chi \right) &=&G^{-1}\left( \func{Im}A^{\prime
}+3h^{-1}h^{\prime }\func{Im}A+\theta ^{\prime }\func{Re}A-3\lambda
BGh^{-1}\sin \theta \right) \\
\func{Im}_{1}\left( \ast d\chi \right) &=&G^{-1}\left( -\func{Re}A^{\prime
}-3h^{-1}h^{\prime }\func{Re}A+\theta ^{\prime }\func{Im}A-3\lambda
BGh^{-1}\cos \theta \right) \\
\func{Re}_{2}\left( \ast d\chi \right) &=&-4\lambda h^{-4}\func{Im}\left(
AF^{3}\right) \\
&=&-4\lambda h^{-1}\left( \sin \theta \func{Re}A+\cos \theta \func{Im}%
A\right) .
\end{eqnarray*}
\end{proof}

Similarly, we can work out $d\ast \chi $ and $\ast d\ast \chi $.

\begin{proposition}
\label{Propdschi}Suppose $\chi $ is an $SU\left( 3\right) $-equivariant $3$%
-form on $M^{7}$ given by (\ref{gen3form}). Then,%
\begin{equation}
\ast \chi =\frac{i}{2}AF^{3}Gdr\wedge \Omega -\frac{i}{2}\bar{A}\bar{F}%
^{3}Gdr\wedge \bar{\Omega}+\frac{1}{2}h^{4}B\omega ^{2}  \label{schi}
\end{equation}%
and 
\begin{eqnarray}
d\ast \chi &=&\left( \frac{1}{2}\frac{B^{\prime }h}{G}+\frac{2h^{\prime }B}{G%
}+2\lambda \left( \cos \theta \func{Re}A-\sin \theta \func{Im}A\right)
\right) Gh^{3}dr\wedge \omega ^{2}  \label{dschi} \\
\ast d\ast \chi &=&4h^{-1}\left( \frac{1}{4}\frac{B^{\prime }h}{G}+\frac{%
h^{\prime }B}{G}+\lambda \left( \cos \theta \func{Re}A-\sin \theta \func{Im}%
A\right) \right) \left( G^{-1}\frac{\partial }{\partial r}\right) \lrcorner
\varphi  \label{sdschi}
\end{eqnarray}%
In particular, $d\ast \chi \in \Lambda _{7}^{5}$ and $\ast d\ast \chi \in
\Lambda _{7}^{2}.$
\end{proposition}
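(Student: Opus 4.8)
The plan is to establish the three displayed identities in sequence; each reduces to organized bookkeeping with the $SU(3)$-structure relations (\ref{su3struct}) and the Hodge-duality table (\ref{hodgeduals}). \emph{Step 1 ($\ast\chi$).} Starting from the expression (\ref{gen3form}) for $\chi$, I would apply $\ast_{7}$ term by term. Since $A$, $B$, $h$ are merely functions of $r$, the only inputs needed are $\ast_{7}\Omega = iG\,dr\wedge\Omega$ from (\ref{hodgeduals}) (hence, by conjugation, $\ast_{7}\bar{\Omega} = -iG\,dr\wedge\bar{\Omega}$) together with $\ast_{7}(G\,dr\wedge\omega) = \tfrac12 h^{2}\omega^{2}$. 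Assembling the three pieces gives (\ref{schi}) at once.

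\emph{Step 2 ($d\ast\chi$).} I would differentiate (\ref{schi}) term by term. Because every coefficient depends on $r$ only, $d$ acts as $dr\wedge\partial_{r}(\cdot)$ together with the exterior derivatives of the fixed $SU(3)$-forms, supplied by (\ref{su3struct}): $d\Omega = 2\lambda i\,\omega^{2}$, $d(\omega^{2})=0$, so $d(dr\wedge\Omega) = -dr\wedge d\Omega = -2\lambda i\,dr\wedge\omega^{2}$ and likewise $d(dr\wedge\bar{\Omega}) = 2\lambda i\,dr\wedge\omega^{2}$ since $d\bar{\Omega} = \overline{d\Omega} = -2\lambda i\,\omega^{2}$. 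In the $dr\wedge\Omega$ and $dr\wedge\bar{\Omega}$ terms the $d(\text{coefficient})$ contributions vanish (they carry $dr\wedge dr$), so only the $d(dr\wedge\Omega)$ pieces survive and collapse into $\omega^{2}$-terms, while the $\tfrac12 h^{4}B\,\omega^{2}$ term contributes only through $dr\wedge\partial_{r}$. Writing $F^{3}=h^{3}e^{i\theta}$, so that $\func{Re}(AF^{3}) = h^{3}(\cos\theta\,\func{Re}A - \sin\theta\,\func{Im}A)$, the $\Omega$- and $\bar{\Omega}$-contributions recombine into $2\lambda G\,\func{Re}(AF^{3})\,dr\wedge\omega^{2}$, and collecting everything yields (\ref{dschi}).

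\emph{Step 3 ($\ast d\ast\chi$ and the membership claims).} Since $d\ast\chi$ is a scalar multiple of $G\,dr\wedge\omega^{2}$, its Hodge dual follows from the single identity $\ast_{7}(G\,dr\wedge\omega^{2}) = 2h^{-2}\omega$ in (\ref{hodgeduals}). Contracting (\ref{g2phi}) with $\partial_{r}$ gives $(G^{-1}\partial_{r})\lrcorner\varphi = h^{2}\omega$, so replacing $\omega$ by $h^{-2}(G^{-1}\partial_{r})\lrcorner\varphi$ and tracking the constants produces (\ref{sdschi}). Finally, $\omega = h^{-2}(G^{-1}\partial_{r})\lrcorner\varphi$ lies in $\Lambda_{7}^{2}$ by definition of that subspace, so $\ast d\ast\chi\in\Lambda_{7}^{2}$; and $\ast_{7}\omega\in\Lambda_{7}^{5}$ by Hodge duality, while $d\ast\chi$ is a multiple of $G\,dr\wedge\omega^{2}=2h^{-2}\ast_{7}\omega$, so $d\ast\chi\in\Lambda_{7}^{5}$. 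There is no genuine obstacle: the proof is essentially bookkeeping. The only points demanding care are the signs generated by complex conjugation together with $d\bar{\Omega}=-2\lambda i\,\omega^{2}$, and the fact that on $M^{7}=N^{6}\times L$ the forms $\omega,\Omega$ are $r$-independent, so exterior differentiation splits cleanly into the $N^{6}$-part (controlled by (\ref{su3struct})) and the $dr\wedge\partial_{r}$-part; the slightly fiddly point is arranging the constant factors so that the $\Omega$- and $\bar{\Omega}$-terms combine precisely into $\func{Re}(AF^{3})$ in (\ref{dschi}).
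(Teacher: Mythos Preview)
Your proposal is correct and follows essentially the same route as the paper: apply the Hodge-star table (\ref{hodgeduals}) termwise to obtain $\ast\chi$, differentiate using the $SU(3)$-structure relations (\ref{su3struct}) to get $d\ast\chi$ as a multiple of $G\,dr\wedge\omega^{2}$, and then apply (\ref{hodgeduals}) once more together with $(G^{-1}\partial_{r})\lrcorner\varphi = h^{2}\omega$. Your treatment is in fact slightly more explicit than the paper's in two places: you spell out the sign bookkeeping in $d(dr\wedge\Omega)$ and $d(dr\wedge\bar{\Omega})$, and you actually justify the membership claims $d\ast\chi\in\Lambda_{7}^{5}$ and $\ast d\ast\chi\in\Lambda_{7}^{2}$, which the paper leaves implicit.
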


\begin{proof}
To find $\ast \chi $ we just apply (\ref{hodgeduals}) to $\chi $ (\ref%
{gen3form}): 
\begin{eqnarray*}
\ast \chi &=&\ast \left( \frac{1}{2}AF^{3}\Omega +\frac{1}{2}\bar{A}\bar{F}%
^{3}\bar{\Omega}+Gh^{2}Bdr\wedge \omega \right) \\
&=&\frac{i}{2}AF^{3}Gdr\wedge \Omega -\frac{i}{2}\bar{A}\bar{F}^{3}Gdr\wedge 
\bar{\Omega}+\frac{1}{2}h^{4}B\omega ^{2}
\end{eqnarray*}%
Then, to differentiate this, we use (\ref{su3struct}): 
\begin{eqnarray*}
d\left( \ast \chi \right) &=&-\frac{i}{2}AF^{3}Gdr\wedge d\Omega +\frac{i}{2}%
\bar{A}\bar{F}^{3}Gdr\wedge d\bar{\Omega}+\frac{1}{2}\left( h^{4}B\right)
^{\prime }dr\wedge \omega ^{2}+\frac{1}{2}h^{4}B\left( d\omega ^{2}\right) \\
&=&\lambda AF^{3}Gdr\wedge \omega ^{2}+\lambda \bar{A}\bar{F}^{3}Gdr\wedge
\omega ^{2}+\frac{1}{2}\left( h^{4}B\right) ^{\prime }dr\wedge \omega ^{2} \\
&=&\left( \lambda \left( AF^{3}+\bar{A}\bar{F}^{3}\right) +\frac{1}{2}\frac{%
\left( h^{4}B\right) ^{\prime }}{G}\right) Gdr\wedge \omega ^{2} \\
&=&\left( \frac{1}{2}\frac{\left( h^{4}B\right) ^{\prime }}{G}+2\lambda 
\func{Re}\left( AF^{3}\right) \right) Gdr\wedge \omega ^{2} \\
&=&\left( \frac{1}{2}\frac{B^{\prime }h}{G}+\frac{2h^{\prime }B}{G}+2\lambda
\left( \cos \theta \func{Re}A-\sin \theta \func{Im}A\right) \right)
Gh^{3}dr\wedge \omega ^{2}
\end{eqnarray*}%
Applying (\ref{hodgeduals}) again, and using the expression for $\varphi $ (%
\ref{g2phi}), we get (\ref{sdschi}).
\end{proof}

To work out the torsion of the $G_{2}$-structure $\left( \varphi ,g\right) $
on $M^{7}$ we can use Propositions \ref{Propdchi} and \ref{Propdschi} in a
very important special case when $\chi =\varphi .$

\begin{corollary}
\label{Corrdphidpsi}In the notation of (\ref{chinotation}), the components
of $\ast d\varphi $ are given by 
\begin{subequations}%
\label{sdphicpts} 
\begin{eqnarray}
\func{Re}_{1}\left( \ast d\varphi \right) &=&\frac{\theta ^{\prime }}{G}%
-3\lambda h^{-1}\sin \theta \\
\func{Im}_{1}\left( \ast d\varphi \right) &=&-3G^{-1}h^{-1}\left( h^{\prime
}+\lambda G\cos \theta \right) \\
\func{Re}_{2}\left( \ast d\varphi \right) &=&-4\lambda h^{-1}\sin \theta
\end{eqnarray}%
\end{subequations}
and 
\begin{equation}
\ast d\psi =4h^{-1}\left( \frac{h^{\prime }}{G}+\lambda \cos \theta \right)
\left( G^{-1}\frac{\partial }{\partial r}\right) \lrcorner \varphi
\label{sdpsi}
\end{equation}
\end{corollary}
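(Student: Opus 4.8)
The plan is to recognize the Corollary as nothing more than the specialization of Propositions~\ref{Propdchi} and~\ref{Propdschi} to the case $\chi=\varphi$, followed by trigonometric simplification. \textbf{First}, I would match $\varphi$ against the general $SU(3)$-equivariant $3$-form: comparing the expression~(\ref{g2phi}) for $\varphi$ with the general form~(\ref{gen3form}) shows that $\varphi$ is exactly the instance with $A=1$ and $B=1$, so in the notation~(\ref{chinotation}) we have $\func{Re}_{1}\varphi=1$, $\func{Im}_{1}\varphi=0$, $\func{Re}_{2}\varphi=1$, and since $A,B$ are constants also $A'=B'=0$.

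\textbf{Next}, I would substitute these values into the formulas~(\ref{stdchicpts}) of Proposition~\ref{Propdchi}. In $\func{Re}_{1}(\ast d\chi)$ the terms containing $\func{Im}A$ and $\func{Im}A'$ drop out and $\func{Re}A=B=1$, leaving $\func{Re}_{1}(\ast d\varphi)=\frac{\theta'}{G}-3\lambda h^{-1}\sin\theta$; in $\func{Im}_{1}(\ast d\chi)$ the term $\theta'\func{Im}A$ vanishes and $\func{Re}A'=0$, leaving $\func{Im}_{1}(\ast d\varphi)=-3G^{-1}h^{-1}(h'+\lambda G\cos\theta)$; and $\func{Re}_{2}(\ast d\chi)$ collapses at once to $-4\lambda h^{-1}\sin\theta$. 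This yields~(\ref{sdphicpts}).

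\textbf{Finally}, for~(\ref{sdpsi}) I would use that $\psi=\ast_{7}\varphi$, so that $\ast d\psi=\ast d\ast\varphi$, which is precisely the quantity computed in formula~(\ref{sdschi}) of Proposition~\ref{Propdschi} with $\chi=\varphi$. Setting $A=B=1$ and $B'=0$ there, the bracket $\tfrac14\tfrac{B'h}{G}+\tfrac{h'B}{G}+\lambda(\cos\theta\func{Re}A-\sin\theta\func{Im}A)$ reduces to $\tfrac{h'}{G}+\lambda\cos\theta$, giving exactly~(\ref{sdpsi}); in particular $\ast d\psi\in\Lambda_{7}^{2}$ as already noted in Proposition~\ref{Propdschi}.

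Since everything is a direct substitution, there is no real obstacle. The only points needing a moment's care are the bookkeeping that identifies $\varphi$ with the $A=B=1$ case of~(\ref{gen3form}) and the observation that $\ast d\psi$ is covered by the $\ast d\ast\chi$ formula of Proposition~\ref{Propdschi} applied at $\chi=\varphi$; the derivative and $\sin/\cos$ simplifications are then immediate.
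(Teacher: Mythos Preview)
Your proof is correct and follows exactly the same approach as the paper: both simply set $A=1$, $B=1$ (equivalently $\func{Re}A=1$, $\func{Im}A=0$, $B=1$) in Propositions~\ref{Propdchi} and~\ref{Propdschi} and read off the results. Your version just spells out the substitutions in more detail than the paper's one-line proof.
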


\begin{proof}
We set $\func{Re}A=1,$ $\func{Im}A=0$ and $B=1$ in Propositions \ref%
{Propdchi} and \ref{Propdschi} and thus obtain (\ref{sdphicpts}) and (\ref%
{sdpsi}).
\end{proof}

Combining Proposition \ref{PropChiG2cpts} and Corollary \ref{Corrdphidpsi},
we obtain the torsion components of the $G_{2}$-structure $\left( \varphi
,g\right) .$

\begin{theorem}
\label{ThmTorsionCpts}The torsion components of the warped product $G_{2}$%
-structure $\left( \varphi ,g\right) $ on $M^{7}$ are given by 
\begin{subequations}%
\label{torsioncpts} 
\begin{eqnarray}
\tau _{1} &=&\frac{1}{7}\left( \frac{\theta ^{\prime }}{G}-\frac{6\lambda
\sin \theta }{h}\right)  \label{tau1} \\
\tau _{7} &=&-h^{-1}\left( \frac{h^{\prime }}{G}+\lambda \cos \theta \right)
Gdr  \label{tau7} \\
\tau _{14} &=&0  \label{tau14} \\
\tau _{27}^{\sharp } &=&\frac{1}{7}\left( \frac{\theta ^{\prime }}{G}+\frac{%
\lambda \sin \theta }{h}\right) \func{diag}\left( 6,-\delta _{6}\right)
\label{tau27}
\end{eqnarray}%
\end{subequations}%
where $\tau _{27}^{\sharp }$ denotes $\tau _{27}$ with one raised index.
Correspondingly, the full torsion tensor $T$ is given by 
\begin{equation}
T^{\sharp }=\func{diag}\left( \frac{\theta ^{\prime }}{G},-\frac{\lambda
\sin \theta }{h}\delta _{6}\right) -h^{-1}\left( \frac{h^{\prime }}{G}%
+\lambda \cos \theta \right) J_{6}  \label{torsiontensor}
\end{equation}%
where $J_{6}$ is the (almost) complex structure on $M^{6}.$
\end{theorem}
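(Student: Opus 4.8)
The plan is to determine the torsion components by comparing the intrinsic relations (\ref{dphi})--(\ref{dpsi}) for $d\varphi$ and $d\psi$ with the explicit formulas for $\ast d\varphi$ and $\ast d\psi$ already established in Corollary \ref{Corrdphidpsi}. First I would apply the Hodge star to (\ref{dphi}) and (\ref{dpsi}). Since $\ast\psi=\varphi$, since $\ast\ast$ is the identity on forms of every degree in dimension $7$, and using the standard identities relating wedge products, contractions, and $\ast$ (so that $\ast(\tau_7\wedge\varphi)$ is a form in $\Lambda^3_7$ and $\ast(\tau_7\wedge\psi)=\tau_7^{\sharp}\lrcorner\varphi\in\Lambda^2_7$), this produces
\[
\ast d\varphi=4\tau_1\varphi-3\ast(\tau_7\wedge\varphi)-3\,\mathrm{i}_\varphi(\tau_{27}),\qquad \ast d\psi=-4\,\tau_7^{\sharp}\lrcorner\varphi-2\tau_{14}.
\]

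From the second of these I would read off $\tau_{14}$ and $\tau_7$. Corollary \ref{Corrdphidpsi}, equation (\ref{sdpsi}), expresses $\ast d\psi$ as an explicit multiple of $(G^{-1}\partial/\partial r)\lrcorner\varphi$, which lies entirely in $\Lambda^2_7$. Since $\Lambda^2_7$ and $\Lambda^2_{14}$ are complementary, comparing with $\ast d\psi=-4\tau_7^{\sharp}\lrcorner\varphi-2\tau_{14}$ forces $\tau_{14}=0$ and pins down $\tau_7^{\sharp}$; lowering the index with the warped metric $g_7$ from (\ref{g7met}) then gives (\ref{tau14}) and (\ref{tau7}).

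For $\tau_1$ and $\tau_{27}$ I would combine the first identity with Proposition \ref{PropChiG2cpts}. The form $\chi=\ast d\varphi$ is $SU(3)$-equivariant and, in the notation of (\ref{chinotation}), its components $\func{Re}_1,\func{Im}_1,\func{Re}_2$ are recorded in (\ref{sdphicpts}); hence Proposition \ref{PropChiG2cpts} writes $\ast d\varphi=X\lrcorner\psi+\mathrm{i}_\varphi(s)$ with $s$ and $\func{Tr}s$ computable via (\ref{chicpts}). On the other hand, $\ast(\tau_7\wedge\varphi)$ lies in $\Lambda^3_7$ while $\mathrm{i}_\varphi$ is injective with image $\Lambda^3_1\oplus\Lambda^3_{27}$ and satisfies $\mathrm{i}_\varphi(g)=\varphi$; so matching the $\Lambda^3_1\oplus\Lambda^3_{27}$ parts of the two expressions for $\ast d\varphi$ yields $s=4\tau_1 g-3\tau_{27}$. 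Taking the trace gives $\tau_1=\tfrac1{28}\func{Tr}s$, and the traceless part gives $\tau_{27}=\tfrac13(4\tau_1 g-s)$; substituting the values from (\ref{sdphicpts}) produces (\ref{tau1}) and (\ref{tau27}).

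It remains to assemble $T=\tau_1 g+\tau_7\lrcorner\varphi+\tau_{14}+\tau_{27}$ and raise an index with $g_7$. The $\tau_1 g+\tau_{27}$ piece collapses to $\func{diag}(\theta'/G,\,-\lambda h^{-1}\sin\theta\,\delta_6)$, while $\tau_7\lrcorner\varphi$ becomes the complex-structure term after contracting $dr$ into $\varphi$ using (\ref{g2phi}) (note $(\partial/\partial r)\lrcorner\varphi=Gh^2\omega$) and recalling the relation between $\omega$ and $J_6$; this gives (\ref{torsiontensor}). I expect the main obstacle to be purely computational bookkeeping: getting the Hodge-star and $\ast\ast$ signs exactly right, verifying the numerical constant $\tfrac1{28}$ (equivalently $\mathrm{i}_\varphi(g)=\varphi$ together with the coefficient $4$ in (\ref{dphi})), and correctly handling index raising in the warped metric $g_7=G^2dr^2+h^2g_6$ when turning $\tau_7\lrcorner\varphi$ into $J_6$.
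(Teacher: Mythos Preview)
Your proposal is correct and follows essentially the same route as the paper: apply the Hodge star to (\ref{dphi})--(\ref{dpsi}), feed the $SU(3)$-components of $\ast d\varphi$ from Corollary~\ref{Corrdphidpsi} into Proposition~\ref{PropChiG2cpts}, and match $G_{2}$-type components. The only cosmetic difference is that the paper reads off $\tau_{7}$ from the $\Lambda^{3}_{7}$ part of $\ast d\varphi$ (the $X$-term, via $\func{Im}_{1}(\ast d\varphi)$) rather than from $\ast d\psi$ as you do; both identifications give the same $\tau_{7}$, and the paper then uses $\ast d\psi$ only to conclude $\tau_{14}=0$, exactly as you outline.
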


\begin{remark}
Expressions for torsion components of this warped product $G_{2}$-structure
have originally been derived by Cleyton and Ivanov in \cite%
{CleytonIvanovCurv} and also later on by Karigiannis, McKay and Tsui in \cite%
{KarigiannisMcKayTsui}. However here we give the $\tau _{27}$ component as a 
$2$-tensor rather a $3$-form, and we also give the expression for the full
torsion tensor. To the author's knowledge these formulae have not appeared
in the literature.
\end{remark}

\begin{proof}[Proof of Theorem \protect\ref{ThmTorsionCpts}]
From Proposition \ref{PropChiG2cpts}, we know that if we write 
\[
\ast d\varphi =X\lrcorner \psi +\mathrm{i}_{\varphi }\left( s\right) 
\]%
then,%
\begin{eqnarray*}
X &=&\left( \func{Im}_{1}\left( \ast d\varphi \right) \right) G^{-1}\frac{%
\partial }{\partial r} \\
s &=&\left( 3\func{Re}_{2}\left( \ast d\varphi \right) -2\func{Re}_{1}\left(
\ast d\varphi \right) \right) G^{2}dr^{2}+\func{Re}_{1}\left( \ast d\varphi
\right) h^{2}g_{6} \\
\func{Tr}s &=&3\func{Re}_{2}\left( \ast d\varphi \right) +4\func{Re}%
_{1}\left( \ast d\varphi \right)
\end{eqnarray*}%
Using Corollary \ref{Corrdphidpsi}, we thus obtain 
\begin{eqnarray*}
X &=&-3h^{-1}\left( \frac{h^{\prime }}{G}+\lambda \cos \theta \right) G^{-1}%
\frac{\partial }{\partial r} \\
s &=&-2\left( \frac{\theta ^{\prime }}{G}+3\lambda h^{-1}\sin \theta \right)
G^{2}dr^{2}+\left( \frac{\theta ^{\prime }}{G}-3\lambda h^{-1}\sin \theta
\right) h^{2}g_{6} \\
\func{Tr}s &=&4\left( \frac{\theta ^{\prime }}{G}-6\lambda h^{-1}\sin \theta
\right)
\end{eqnarray*}%
Recall that 
\[
d\varphi =4\tau _{1}\psi -3\tau _{7}\wedge \varphi -3\ast \mathrm{i}%
_{\varphi }\left( \tau _{27}\right) 
\]%
and hence%
\[
\ast d\varphi =4\tau _{1}\varphi +3\tau _{7}^{\sharp }\lrcorner \psi -3%
\mathrm{i}_{\varphi }\left( \tau _{27}\right) 
\]%
Now, 
\begin{eqnarray*}
3\tau _{7}^{\sharp } &=&X \\
4\tau _{1} &=&\frac{1}{7}\func{Tr}s \\
-3\tau _{27} &=&s-\frac{1}{7}\left( \func{Tr}s\right) g_{7}
\end{eqnarray*}%
Hence immediately obtain expressions for $\tau _{1}$ and $\tau _{7}$. From
this, we also get $\tau _{27}$ 
\begin{eqnarray*}
\tau _{27} &=&-\frac{1}{3}s+\frac{1}{21}\left( \func{Tr}s\right) g_{7} \\
&=&\frac{6}{7}\left( \frac{\theta ^{\prime }}{G}+\lambda h^{-1}\sin \theta
\right) G^{2}dr^{2}-\frac{1}{7}\left( \frac{\theta ^{\prime }}{G}+\lambda
h^{-1}\sin \theta \right) h^{2}g_{6} \\
&=&\frac{1}{7}\left( \frac{\theta ^{\prime }}{G}+\lambda h^{-1}\sin \theta
\right) \left( 6G^{2}dr^{2}-h^{2}g_{6}\right)
\end{eqnarray*}%
Raising one index on $\tau _{27}$ we obtain (\ref{tau27}).

Recall that%
\[
d\psi =-4\tau _{7}\wedge \psi -2\ast \tau _{14} 
\]%
and hence 
\[
\ast d\psi =-4\tau _{7}^{\sharp }\lrcorner \varphi -2\tau _{14}. 
\]%
However, from Proposition \ref{Propdschi} we conclude that $\pi _{14}\left(
\ast d\psi \right) =0$, and thus $\tau _{14}=0.$

To obtain the full torsion tensor, we just calculate 
\begin{eqnarray*}
T &=&\tau _{1}g+\tau _{7}\lrcorner \varphi +\tau _{14}+\tau _{27} \\
&=&\left( \frac{\theta ^{\prime }}{G}\right) G^{2}dr^{2}-\left( \lambda
h^{-1}\sin \theta \right) h^{2}g_{6}-h^{-1}\left( \frac{h^{\prime }}{G}%
+\lambda \cos \theta \right) \omega .
\end{eqnarray*}%
Raising the first index using $g_{7}^{-1}$, we get (\ref{torsiontensor}).
\end{proof}

\begin{example}
Suppose $M^{6}$ is a Calabi-Yau manifold, then the torsion tensor is given
by 
\[
T^{\sharp }=\func{diag}\left( \frac{\theta ^{\prime }}{G},0\right)
-h^{-1}\left( \frac{h^{\prime }}{G}\right) J_{6} 
\]%
The $G_{2}$-structure is then torsion-free if and only if $\theta ^{\prime }$
and $h^{\prime }$ both vanish. After redefining the $r$ coordinate to set $%
G=1$, the $G_{2}$-structure is then given by 
\[
\varphi =\func{Re}\left( h^{3}e^{i\theta }\Omega \right) +dr\wedge \left(
h^{2}\omega \right) . 
\]%
This is just a direct product $G_{2}$-structure which is obtained from an $%
SU\left( 3\right) $-structure which is obtained from the original one by a
constant phase factor on $\Omega $ and an overall constant conformal factor $%
h$.
\end{example}

Note that if $M^{6}$ is nearly K\"{a}hler, so that $\lambda \neq 0$, then in
order to have $T=0$, we still need $\theta ^{\prime }=0$. Moreover, we also
must have $\sin \theta =0.$ Thus, $\theta =k\pi $ for some integer $\pi .$
This sets both $\tau _{1}$ and $\tau _{27}$ components to zero. In order to
have $\tau _{7}=0$, we then also need $G^{-1}h^{\prime }+\lambda \cos \theta
=0.$ Since $\theta =k\pi $, $\cos \theta =\pm 1.$ So, must have $h^{\prime
}=\pm \lambda G.$

For convenience, let 
\begin{subequations}%
\label{abc} 
\begin{eqnarray}
\alpha &=&\frac{\theta ^{\prime }}{G}  \label{alpha} \\
\beta &=&\lambda h^{-1}\sin \theta  \label{beta} \\
\gamma &=&\frac{h^{\prime }}{h}+\lambda h^{-1}G\cos \theta  \label{gamma}
\end{eqnarray}%
\end{subequations}%
. Then in terms of $\alpha ,\beta ,\gamma $, the non-vanishing torsion
components are 
\begin{subequations}%
\label{torscompabc} 
\begin{eqnarray}
\tau _{1} &=&\frac{1}{7}\left( \alpha -6\beta \right) \\
\tau _{7} &=&-\gamma dr \\
\tau _{27}^{\sharp } &=&\frac{1}{7}\left( \alpha +\beta \right) \func{diag}%
\left( 6,-\delta _{6}\right)
\end{eqnarray}%
\end{subequations}%
, and the full torsion tensor is then 
\begin{equation}
T^{\sharp }=\func{diag}\left( \alpha ,-\beta \delta _{6}\right)
-G^{-1}\gamma J_{6}  \label{torsabc}
\end{equation}

The components $\alpha ,\beta ,\gamma $ thus uniquely define the torsion
components $\tau _{1},\tau _{7}$ and $\tau _{27}^{\sharp }.$ Also note that
in the important special case of a co-closed $G_{2}$-structure, $\gamma
=0\,. $ In the case when $\lambda =0$ and hence the underlying $6$%
-dimensional space $M^{6}$ is Calabi-Yau, we have $\beta =0$.

Consider what happens to torsion components under a conformal transformation.

\begin{proposition}
\label{PropConfTrans}Under a conformal transformation of the $G_{2}$%
-structure (\ref{g2phi})%
\begin{equation}
\varphi \longrightarrow \tilde{\varphi}=f^{3}\varphi ,  \label{phiconftrans}
\end{equation}%
where $f$ is a nowhere zero function on $L$, the torsion components $\alpha
,\beta ,\gamma $ transform as follows 
\begin{subequations}%
\begin{eqnarray}
\alpha &\longrightarrow &\tilde{\alpha}=f^{-1}\alpha \\
\beta &\longrightarrow &\tilde{\beta}=f^{-1}\beta \\
\gamma &\longrightarrow &\tilde{\gamma}=\frac{f^{\prime }}{f}+\gamma
\end{eqnarray}%
\end{subequations}%
.
\end{proposition}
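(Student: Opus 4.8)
The plan is to push everything back to the level of the defining data $(h,\theta,G)$ of the warped product $G_2$-structure (\ref{g2phi}) and then substitute into the definitions (\ref{alpha})--(\ref{gamma}) of $\alpha,\beta,\gamma$. The conformal rescaling (\ref{phiconftrans}) acts only on the $G_2$-structure on $M^7$; the underlying $SU(3)$-structure $(g_6,\omega,\Omega)$ on $N^6$ is held fixed (consistently with the fact that $\lambda$ is intrinsic to that $SU(3)$-structure and therefore unchanged). So the first move is to write $\tilde\varphi$ again in the normal form (\ref{g2phi}), with new functions $\tilde F=\tilde h e^{i\tilde\theta/3}$ and $\tilde G$, and to read these off from $f^{3}\varphi$.

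First I would compare the coefficients of the three linearly independent $SU(3)$-equivariant forms $\Omega$, $\bar\Omega$, $dr\wedge\omega$ in the identity $\tilde\varphi=f^{3}\varphi$. The $\Omega$-coefficient gives $\tfrac12\tilde F^{3}=\tfrac12 f^{3}F^{3}$, hence (taking $f>0$, the case $f<0$ being identical up to a shift $\tilde\theta=\theta+\pi$ which affects none of the final formulas) $\tilde F=fF$, i.e. $\tilde h=fh$ and $\tilde\theta=\theta$. The $dr\wedge\omega$-coefficient gives $\tilde G\,\tilde h^{2}=f^{3}Gh^{2}$, and together with $\tilde h=fh$ this forces $\tilde G=fG$. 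As a sanity check one notes that $\varphi\mapsto f^{3}\varphi$ scales the induced metric by $g_{\varphi}\mapsto f^{2}g_{\varphi}$ — since $s_{ab}$ in (\ref{metricdefdirect}) is cubic in $\varphi$, the factor $(\det s)^{-1/9}$ contributes $f^{-7}$ against $f^{9}$ from $s_{ab}$ — and indeed $\tilde G^{2}dr^{2}+\tilde h^{2}g_{6}=f^{2}(G^{2}dr^{2}+h^{2}g_{6})$.

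The final step is a direct substitution into (\ref{abc}). From $\tilde\theta=\theta$ and $\tilde G=fG$ one gets $\tilde\alpha=\tilde\theta'/\tilde G=f^{-1}\alpha$; from $\tilde h=fh$ and $\tilde\theta=\theta$ one gets $\tilde\beta=\lambda\tilde h^{-1}\sin\tilde\theta=f^{-1}\lambda h^{-1}\sin\theta=f^{-1}\beta$; and for $\gamma$ the only nontrivial manipulation is $\tilde h'/\tilde h=(fh)'/(fh)=f'/f+h'/h$, so that $\tilde\gamma=f'/f+h'/h+\lambda\tilde h^{-1}\tilde G\cos\tilde\theta=f'/f+h'/h+\lambda h^{-1}G\cos\theta=f'/f+\gamma$, which is exactly the asserted additive term.

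The only place that needs any care — the main obstacle, such as it is — is the identification in the second paragraph of how the conformal factor distributes between $\tilde h$ and $\tilde G$: it is precisely the fact that $f$ enters $\tilde h=fh$ and $\tilde G=fG$ with the \emph{same} power that makes $\alpha$ and $\beta$ scale homogeneously by $f^{-1}$ while $\gamma$ picks up only the logarithmic-derivative term $f'/f$. Everything else is bookkeeping. (One could alternatively derive the same result from a general conformal-change formula for $\nabla\varphi$ and the torsion classes, but for this explicit warped-product family the coefficient-matching argument above is shorter and self-contained.)
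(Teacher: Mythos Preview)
Your proof is correct and follows essentially the same route as the paper: identify how the warped-product parameters $(h,\theta,G)$ transform under $\varphi\mapsto f^{3}\varphi$, then substitute into the definitions (\ref{abc}). The only cosmetic difference is that you read off $\tilde h=fh$, $\tilde\theta=\theta$, $\tilde G=fG$ by matching the $\Omega$, $\bar\Omega$, $dr\wedge\omega$ coefficients in $\tilde\varphi=f^{3}\varphi$ directly, whereas the paper invokes the general fact $g_{\tilde\varphi}=f^{2}g_{\varphi}$ and reads the same transformations off the metric (\ref{g7met}); your coefficient-matching is slightly more self-contained, and your remark on the sign of $f$ is a detail the paper leaves implicit.
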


\begin{proof}
It is well-known \cite{GrigorianG2Torsion1,karigiannis-2005-57} that under
the conformal transformation (\ref{phiconftrans}), the metric $g_{7}$
transforms as 
\[
g_{7}\longrightarrow \tilde{g}_{7}=f^{2}g 
\]%
Note that from (\ref{G2warpedstruct2}) this implies that $\theta $ is
unaffected by the transformation, while 
\begin{subequations}%
\label{Ghconf} 
\begin{eqnarray}
G &\longrightarrow &\tilde{G}=fG \\
h &\longrightarrow &\tilde{h}=fh
\end{eqnarray}%
\end{subequations}%
Thus, from (\ref{abc}) we immediately obtain 
\begin{eqnarray*}
\tilde{\alpha} &=&\frac{\theta ^{\prime }}{\tilde{G}}=f^{-1}\alpha \\
\tilde{\beta} &=&\lambda \tilde{h}^{-1}\sin \theta =f^{-1}\beta \\
\tilde{\gamma} &=&\frac{\tilde{h}^{\prime }}{\tilde{h}}+\lambda \tilde{h}%
^{-1}\tilde{G}\cos \theta \\
&=&\frac{f^{\prime }h+fh^{\prime }}{fh}+\lambda hG\cos \theta =\frac{%
f^{\prime }}{f}+\gamma
\end{eqnarray*}
\end{proof}

Proposition \ref{PropConfTrans} implies that using a suitable conformal
transformation, we can always set $\gamma $, and hence, $\tau _{7}$, to zero.

\begin{corollary}
\label{CorrConf}In (\ref{phiconftrans}), let 
\begin{equation}
f\left( r\right) =e^{-\int_{0}^{r}\gamma \left( s\right) ds}.
\label{ft7zero}
\end{equation}%
Then the transformed $G_{2}$-structure $\tilde{\varphi}$ has $\tilde{\gamma}%
=0$ and hence the $7$-dimensional torsion component $\tilde{\tau}_{7}$ also
vanishes.
\end{corollary}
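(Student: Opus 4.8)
The plan is to read this off directly from Proposition \ref{PropConfTrans}, since the function in (\ref{ft7zero}) is precisely the solution of the relevant first-order linear ODE. First I would confirm that this $f$ is an admissible conformal factor for (\ref{phiconftrans}): being an exponential it is everywhere positive on $L$, in particular nowhere zero, and --- because $\gamma$ is a continuous function of $r$ by its definition in (\ref{abc}) --- it is $C^{1}$ with $f^{\prime}=-\gamma f$, hence $f^{\prime}/f=-\gamma$.

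Next I would apply the transformation rule for $\gamma$ from Proposition \ref{PropConfTrans}, namely $\tilde{\gamma}=f^{\prime}/f+\gamma$, and substitute $f^{\prime}/f=-\gamma$ to conclude $\tilde{\gamma}=0$. Since the transformed structure $\tilde{\varphi}$ is again a warped product $G_{2}$-structure of the same type --- the underlying $SU(3)$-structure is unchanged, so $\lambda$ and $\theta$ are unchanged, while $\tilde{G}=fG$ and $\tilde{h}=fh$ by (\ref{Ghconf}) --- its torsion components are given by the same formulas (\ref{torscompabc}) with tilded quantities; in particular $\tilde{\tau}_{7}=-\tilde{\gamma}\,dr=0$, which is the assertion.

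I do not expect any genuine obstacle here: the content is entirely contained in the transformation law already established in Proposition \ref{PropConfTrans}, together with the elementary observation that $\tilde{\gamma}=f^{\prime}/f+\gamma$ is a linear ODE annihilated by the choice (\ref{ft7zero}). The only point I would flag, and it is not needed for this corollary, is global in nature: when $L\cong S^{1}$ the factor $f$ in (\ref{ft7zero}) is single-valued precisely when $\int_{S^{1}}\gamma=0$, so in general the reduction to a co-closed representative is to be understood on the interval (equivalently, on the universal cover of $L$), which is all that is required for the present purpose of reducing the analysis to the case $\tau_{7}=0$.
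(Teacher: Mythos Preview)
Your proposal is correct and is exactly the intended argument: the paper states this as an immediate corollary of Proposition~\ref{PropConfTrans} without giving a separate proof, and your computation $f'/f=-\gamma$ together with the transformation law $\tilde{\gamma}=f'/f+\gamma$ is precisely what makes it immediate. Your side remark about the $S^{1}$ case is a valid caveat but, as you note, not needed for the corollary itself.
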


\begin{remark}
In general, we can always remove the $7$-dimensional component of the
torsion by a conformal transformation if the torsion is in the class $%
\mathbf{1}\oplus \mathbf{7}$ \cite{CleytonIvanovConf, GrigorianG2Torsion1}. $%
G_{2}$-structures in this torsion class are then called \emph{conformally
nearly parallel} $G_{2}$-structures. Corollary \ref{CorrConf} shows that the
warped $G_{2}$-structures (\ref{G2warpedstruct2}) lie in a special subset of 
$G_{2}$-structures of the class $\mathbf{1}\oplus \mathbf{7}\oplus \mathbf{27%
}$ - namely, they are \emph{conformally co-closed }$G_{2}$-structures, since
every such $G_{2}$-structure is conformally equivalent to a co-closed $G_{2}$%
-structure.
\end{remark}

We can use this to show that $\alpha ,\beta ,\gamma $ actually uniquely
determine the $G_{2}$-structure.

\begin{theorem}
Suppose $\alpha ,\beta ,\gamma $, with $\alpha $ and $\beta $ non-zero, and $%
\alpha +\beta $ nowhere zero, are torsion components of some $G_{2}$%
-structure on $M^{7}$ with $\lambda \neq 0$. Then the functions $\theta ,h,G$
are uniquely defined.
\end{theorem}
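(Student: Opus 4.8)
The plan is to exploit the three non-vanishing hypotheses to invert the relations (\ref{abc}) and collapse the system to a single first-order ODE. Since $\alpha$ is nowhere zero, (\ref{alpha}) gives $G=\theta'/\alpha$; since $\beta$ (hence $\lambda$) is nonzero, (\ref{beta}) gives $h=\lambda\sin\theta/\beta$, so in particular $\sin\theta$ never vanishes. Both $h$ and $G$ are then expressed through $\theta$ and $\theta'$, so the entire content of the system sits in (\ref{gamma}).

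Substituting $h=\lambda\sin\theta/\beta$ and $G=\theta'/\alpha$ into (\ref{gamma}) and simplifying, I would obtain $\theta'\cot\theta=\frac{\alpha}{\alpha+\beta}(\gamma+\beta'/\beta)$; that is, $(\log|\sin\theta|)'$ equals a function of the prescribed data alone, the hypothesis $\alpha+\beta\neq0$ being exactly what lets one solve for $\theta'\cot\theta$. Equivalently, and a little more cleanly, differentiating $\beta h=\lambda\sin\theta$ and inserting $\theta'=\alpha G$ and $\lambda G\cos\theta=\gamma h-h'$ from (\ref{alpha}) and (\ref{gamma}) gives the linear homogeneous ODE $(\alpha+\beta)h'=(\alpha\gamma-\beta')h$. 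Either way, $h$ — and hence $\sin\theta$ — is determined up to a single positive multiplicative constant; then $|\cos\theta|=\sqrt{1-\beta^2h^2/\lambda^2}$ and the sign of $\cos\theta$ is forced by (\ref{gamma}) (it equals the sign of $\lambda(\gamma h-h')$), so $\theta$ mod $2\pi$, and then $G=\theta'/\alpha$, are determined up to that same constant. As a variant, one could first rescale by the known conformal factor $f(r)=e^{-\int_0^r\gamma}$ (Corollary \ref{CorrConf}, Proposition \ref{PropConfTrans}) to reduce to the co-closed case $\gamma=0$ and run the same computation there; since $f$ is fixed by the data, nothing is lost.

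The crux is pinning down that multiplicative constant, and here I would use positivity of $G$. Because $G=\theta'/\alpha>0$ with $\alpha$ of constant sign, $\theta'$ never vanishes, so $\theta$ is strictly monotonic; consequently every critical point of the function $r\mapsto\sin\theta=\beta h/\lambda$ is a point where $\cos\theta=0$, i.e. where $|\beta h|=|\lambda|$, and at such a point $h$ (hence the constant) is forced. When $L$ is compact — the case producing the compact soliton solutions the paper is after — the periodic, non-constant function $\beta h$ necessarily has a critical point, so the constant, and therefore $(\theta,h,G)$, is uniquely determined; since a solution is assumed to exist, the compatibility needed when several such critical points occur is automatic. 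The algebra of the first two steps is routine; the genuine work is this last step, together with the bookkeeping of signs and of the branch of $\theta$ needed to turn "determined up to a constant, and the constant is forced" into a clean uniqueness statement.
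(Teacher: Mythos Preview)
Your derivation up through the first-order ODE $(\alpha+\beta)h'=(\alpha\gamma-\beta')h$ is correct and is exactly the computation the paper carries out (the paper first conformally reduces to $\gamma=0$ and then obtains $h'/h=-\beta'/(\alpha+\beta)$, which is your equation specialized). Both approaches therefore arrive at the same conclusion: $h$, hence $\sin\theta$, hence $G$, is determined up to a single multiplicative constant $h_{0}$, and you correctly identify pinning down $h_{0}$ as the crux.

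The gap is in that final step. Your critical-point argument---that monotonicity of $\theta$ forces a point where $\cos\theta=0$, whence $|\beta h|=|\lambda|$ and $h_{0}$ is fixed---is valid \emph{only when $L$ is compact}, as you yourself note. The theorem as stated makes no compactness assumption on $L$; when $L$ is an interval and the range of $\theta$ avoids the half-integer multiples of $\pi$, no such critical point need exist. Worse, one can check directly that in that situation the constant is \emph{not} determined: replacing $h$ by $ch$ for $c$ close to $1$, defining $\sin\theta_{c}=c\sin\theta$ (still with $|\sin\theta_{c}|<1$), choosing the sign of $\cos\theta_{c}$ to keep $G_{c}=-h_{c}'/(\lambda\cos\theta_{c})>0$, one verifies that $\theta_{c}'/G_{c}=\alpha$ holds automatically (it reduces to the very ODE defining $h$), so $(\theta_{c},ch,G_{c})$ is another solution with the same $(\alpha,\beta,\gamma)$. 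To be fair, the paper's own proof is no better at this point: it simply asserts that ``substituting (\ref{thetaintG}) into (\ref{sinthetasol}) will fix $s_{0}$'' without saying how, and the same one-parameter freedom shows that assertion is not justified either. So your argument is at least as complete as the paper's, and strictly more honest about where the difficulty lies; but as a proof of the theorem \emph{as stated}, it does not close the gap in the noncompact case.
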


\begin{proof}
Suppose we are given $\alpha ,\beta ,\gamma $. We need to show that there
exists a unique solution $\{\theta ,h,G\}$ to equations (\ref{abc}). By
Corollary \ref{CorrConf} we can apply a conformal transformation with $f$
given by (\ref{ft7zero}) to set the $7$-dimensional torsion component to
zero. Equations (\ref{abc}) can then be solved for $\theta ,h,G$ using $%
\tilde{\alpha},\tilde{\beta}$ and $\tilde{\gamma}=0\,.$The original $G$ and $%
h$ can be recovered using (\ref{Ghconf}). Hence without loss of generality
can assume that $\gamma =0$. Therefore, we have equations%
\begin{eqnarray*}
\alpha &=&\frac{\theta ^{\prime }}{G} \\
\beta &=&\lambda h^{-1}\sin \theta \\
h^{\prime } &=&-\lambda G\cos \theta
\end{eqnarray*}%
Consider%
\begin{eqnarray}
\beta ^{\prime } &=&-\lambda h^{-2}h^{\prime }\sin \theta +\lambda
h^{-1}\theta ^{\prime }\cos \theta  \nonumber \\
&=&-\frac{h^{\prime }}{h}\beta -\frac{h^{\prime }}{h}\alpha  \nonumber \\
&=&-\frac{h^{\prime }}{h}\left( \alpha +\beta \right)  \label{bpapb}
\end{eqnarray}%
Hence, 
\begin{equation}
\frac{h^{\prime }}{h}=-\frac{\beta ^{\prime }}{\alpha +\beta }.  \label{hpdh}
\end{equation}%
From this, we get $h$ up to a constant factor $h_{0}\neq 0$:%
\begin{equation}
h=h_{0}e^{-\int_{0}^{r}\frac{\beta ^{\prime }\left( s\right) }{\alpha \left(
s\right) +\beta \left( s\right) }ds}  \label{hh0sol}
\end{equation}%
Furthermore, 
\begin{eqnarray*}
\lambda \sin \theta &=&h\beta \\
\lambda \cos \theta &=&-G^{-1}h^{\prime }
\end{eqnarray*}%
Hence, 
\[
\lambda ^{2}=h^{2}\beta ^{2}+G^{-2}\left( h^{\prime }\right) ^{2} 
\]%
Therefore, 
\[
G^{2}=\frac{\left( h^{\prime }\right) ^{2}}{\lambda ^{2}-h^{2}\beta ^{2}} 
\]%
We also have 
\begin{eqnarray*}
\cot \theta &=&-G^{-1}\beta ^{-1}\frac{h^{\prime }}{h} \\
&=&G^{-1}\frac{\beta ^{\prime }}{\beta }\frac{1}{\alpha +\beta }
\end{eqnarray*}%
where we have used (\ref{hpdh}). Thus, 
\begin{eqnarray*}
\left( \cot \theta \right) \theta ^{\prime } &=&G^{-1}\theta ^{\prime }\frac{%
\beta ^{\prime }}{\beta }\frac{1}{\alpha +\beta } \\
&=&\frac{\beta ^{\prime }}{\beta }\frac{\alpha }{\alpha +\beta }
\end{eqnarray*}%
Integrating, we obtain 
\begin{equation}
\sin \theta =s_{0}e^{\int_{0}^{r}\frac{\beta ^{\prime }\left( s\right) }{%
\beta \left( s\right) }\frac{\alpha \left( s\right) }{\alpha \left( s\right)
+\beta \left( s\right) }ds}  \label{sinthetasol}
\end{equation}%
for some constant $s_{0}$. Using both (\ref{hh0sol}) and (\ref{sinthetasol})
note that 
\begin{eqnarray*}
\beta &=&\frac{\lambda \sin \theta }{h}=\lambda \frac{s_{0}}{h_{0}}%
e^{\int_{0}^{r}\frac{\beta ^{\prime }\left( s\right) }{\beta \left( s\right) 
}\frac{\alpha \left( s\right) }{\alpha \left( s\right) +\beta \left(
s\right) }ds}e^{\int_{0}^{r}\frac{\beta ^{\prime }\left( s\right) }{\alpha
\left( s\right) +\beta \left( s\right) }ds} \\
&=&\lambda \frac{s_{0}}{h_{0}}e^{\int_{0}^{r}\frac{\beta ^{\prime }}{\beta }%
ds} \\
&=&\lambda \frac{s_{0}}{h_{0}}\left\vert \frac{\beta }{\beta \left( 0\right) 
}\right\vert
\end{eqnarray*}%
Note that from (\ref{hh0sol}) $h$ is never zero, so is always either
positive or negative. Similarly, from (\ref{sinthetasol}), $\sin \theta $ is
either always zero (if $s_{0}=0$) or always negative or always positive.
This shows that for consistency $\beta $ is also either always zero, or
always positive or always negative. Therefore, $\left\vert \frac{\beta
\left( r\right) }{\beta \left( 0\right) }\right\vert =\frac{\beta \left(
r\right) }{\beta \left( 0\right) }.$ Thus, 
\[
s_{0}=\frac{\beta \left( 0\right) }{\lambda }h_{0} 
\]%
From the definition of $\alpha $, we can also write 
\begin{equation}
\theta =\int_{0}^{r}\alpha \left( s\right) G\left( s\right) ds+\theta _{0}
\label{thetaintG}
\end{equation}%
Since $\sin \theta _{0}=s_{0},$ substituting (\ref{thetaintG})\ into (\ref%
{sinthetasol}) will fix $s_{0}$.
\end{proof}

\begin{remark}
If $\beta $ is zero (but $\lambda \neq 0$), then $\theta $ must be a
constant integer multiple of $\pi $, and hence $\alpha $ must also be zero.
In this case, $G$ is arbitrary, and $h$ is defined from $G$ up to a constant
multiple. If $\alpha =0$ but $\beta \neq 0$, then we can see that $\theta $
is an arbitrary constant, but $h$ and $G$ are defined as 
\begin{eqnarray*}
h &=&\frac{\lambda \sin \theta }{\beta } \\
G &=&-\frac{h^{\prime }}{\lambda \cos \theta }
\end{eqnarray*}%
whenever $\cos \theta \neq 0$. If however, $\cos \theta =0,$ then $G$ is
arbitrary. Also, suppose $\alpha +\beta =0$, and $\gamma =0.$ Then we have 
\begin{eqnarray*}
\frac{\theta ^{\prime }}{G} &=&-\lambda h^{-1}\sin \theta \\
h^{\prime } &=&-\lambda G\cos \theta
\end{eqnarray*}%
Now, $h^{-1}=-\frac{G^{-1}\theta ^{\prime }}{\lambda \sin \theta }.$ Hence, 
\[
\frac{h^{\prime }}{h}=\frac{\cos \theta }{\sin \theta }\theta ^{\prime } 
\]%
Integrating, we find%
\[
h=A\sin \theta 
\]%
for some constant $A$. But, $h=-\frac{\lambda }{\alpha }\sin \theta ,$ so we
must have $\alpha =-\frac{\lambda }{A},$ which is a constant. Thus $\beta $
is also constant. We then find that $G$ is an arbitrary function, and $%
\theta $ and $h$ are given by 
\begin{eqnarray*}
\theta ^{\prime } &=&\alpha G \\
h &=&-\frac{\lambda }{\alpha }\sin \theta
\end{eqnarray*}
\end{remark}

\section{The Laplacian of $\protect\varphi $}

\setcounter{equation}{0}\label{seclap}Consider the Hodge Laplacian of $%
\varphi $:%
\begin{eqnarray*}
\Delta \varphi &=&dd^{\ast }\varphi +d^{\ast }d\varphi \\
&=&-d\ast d\psi +\ast d\ast d\varphi
\end{eqnarray*}%
Since we know from Theorem \ref{ThmTorsionCpts} that $\tau _{14}=0$ and $%
\tau _{7}=-\gamma dr$, we have 
\begin{eqnarray}
\ast d\psi &=&4\gamma \left( G^{-2}\frac{\partial }{\partial r}\right)
\lrcorner \varphi  \nonumber \\
&=&4\gamma G^{-1}h^{2}\omega  \label{sdpsi2}
\end{eqnarray}%
and moreover, 
\begin{equation}
\ast d\varphi =\frac{1}{2}AF^{3}\Omega +\frac{1}{2}\bar{A}\bar{F}^{3}\bar{%
\Omega}+Gh^{2}Bdr\wedge \omega  \label{sdphi2}
\end{equation}%
where%
\begin{eqnarray*}
\func{Re}A &=&\alpha -3\beta \\
\func{Im}A &=&-3G^{-1}\gamma \\
B &=&-4\beta
\end{eqnarray*}%
Using this we can work out $\Delta \varphi .$

\begin{theorem}
\label{Thmlapcpts}In the notation of (\ref{chinotation}), the components of $%
\Delta \varphi $ are given by 
\begin{subequations}%
\label{lapcpts}%
\begin{eqnarray}
\func{Re}_{1}\left( \Delta \varphi \right) &=&3\gamma G^{-2}\left( -\frac{%
\gamma ^{\prime }}{\gamma }+\frac{\gamma \left( 4\beta -3\alpha \right)
+7\beta ^{\prime }}{\alpha +\beta }\right) +\alpha ^{2}-3\beta \left( \alpha
-4\beta \right) \\
\func{Im}_{2}\left( \Delta \varphi \right) &=&G^{-1}\left( 6\beta ^{\prime
}-\alpha ^{\prime }\right) -6\gamma G^{-1}\alpha \\
\func{Re}_{2}\left( \Delta \varphi \right) &=&-4\beta \left( \alpha -3\beta
\right) +4\gamma G^{-2}\left( -\frac{\gamma ^{\prime }}{\gamma }+\frac{%
\gamma \left( 3\beta -2\alpha \right) +5\beta ^{\prime }}{\alpha +\beta }%
\right)
\end{eqnarray}%
\end{subequations}%
.
\end{theorem}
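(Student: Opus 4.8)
\textbf{Proof strategy for Theorem~\ref{Thmlapcpts}.}

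The plan is to compute the Hodge Laplacian directly from $\Delta\varphi=dd^{\ast}\varphi+d^{\ast}d\varphi=-d\ast d\psi+\ast d\ast d\varphi$, the identity recorded at the start of Section~\ref{seclap}. Both summands $-d\ast d\psi$ and $\ast d\ast d\varphi$ are $SU(3)$-equivariant $3$-forms on $M^{7}$, hence each is of the form (\ref{gen3form}) and is determined by its three real component functions $\func{Re}_{1},\func{Im}_{1},\func{Re}_{2}$ from (\ref{chinotation}). I would compute those three functions for each summand and then add.

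The term $\ast d\ast d\varphi$ is the cheap one. As recorded just before the statement, $\ast d\varphi$ is itself an $SU(3)$-equivariant $3$-form of the form (\ref{gen3form}), with $\func{Re}_{1}(\ast d\varphi)=\alpha-3\beta$, $\func{Im}_{1}(\ast d\varphi)=-3G^{-1}\gamma$ and $\func{Re}_{2}(\ast d\varphi)=-4\beta$. Proposition~\ref{Propdchi} therefore applies \emph{verbatim} with $\chi=\ast d\varphi$: substituting these three functions in place of $\func{Re}A,\func{Im}A,B$ into (\ref{stdchicpts}) produces the three components of $\ast d\ast d\varphi$ in one step. This is precisely where the first derivatives $\alpha',\beta',\gamma'$ enter the final formulas.

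For the term $d\ast d\psi$ I would start from $\ast d\psi=4\gamma G^{-1}h^{2}\,\omega$, equation (\ref{sdpsi2}). Setting $p(r):=4\gamma G^{-1}h^{2}$, a function of $r$ alone, we get $d\ast d\psi=p'\,dr\wedge\omega+p\,d\omega=p'\,dr\wedge\omega-3\lambda p\,\func{Re}\Omega$ using (\ref{dlom}). Since $F^{3}=h^{3}e^{i\theta}$ we can write $\func{Re}\Omega=\frac{1}{2}h^{-3}e^{-i\theta}F^{3}\Omega$ plus its conjugate, so reading off coefficients against the template (\ref{gen3form}) gives $\func{Re}_{1}(d\ast d\psi)=-3\lambda p\,h^{-3}\cos\theta$, $\func{Im}_{1}(d\ast d\psi)=3\lambda p\,h^{-3}\sin\theta$ and $\func{Re}_{2}(d\ast d\psi)=p'/(Gh^{2})$.

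It then remains to add $-d\ast d\psi$ to $\ast d\ast d\varphi$ componentwise and simplify using the defining relations (\ref{abc}): $\theta'=\alpha G$ from (\ref{alpha}), $\lambda h^{-1}\sin\theta=\beta$ from (\ref{beta}), and $\lambda h^{-1}G\cos\theta=\gamma-h'/h$ from (\ref{gamma}), together with the identity obtained by differentiating (\ref{beta}) and substituting (\ref{alpha}) and (\ref{gamma}),
\[
\frac{h'}{h}=\frac{\alpha\gamma-\beta'}{\alpha+\beta}\qquad(\alpha+\beta\neq0),
\]
which is the $\gamma\neq0$ generalisation of (\ref{bpapb}). These relations convert the surviving $h'$, $\theta'$, $\sin\theta$ and $\cos\theta$ factors into expressions in $\alpha,\beta,\gamma$ and their $r$-derivatives, and in particular are what produce the $(\alpha+\beta)$ denominators and the $\gamma'/\gamma$ terms in (\ref{lapcpts}). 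I expect the only genuine difficulty to be this last step: it is elementary but involves a large number of terms that must conspire to collapse onto the compact right-hand sides of (\ref{lapcpts}), and it is cleanest to carry it out in the gauge $G\equiv1$ (legitimate, since $G$ can always be absorbed by reparametrising $r$), which keeps the bookkeeping manageable.
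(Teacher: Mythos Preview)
Your proposal is correct and follows essentially the same route as the paper: split $\Delta\varphi=\ast d\ast d\varphi-d\ast d\psi$, feed $\ast d\varphi$ (with its known components $\alpha-3\beta,\,-3G^{-1}\gamma,\,-4\beta$) into Proposition~\ref{Propdchi} to get the first summand, differentiate $\ast d\psi=4\gamma G^{-1}h^{2}\omega$ directly using (\ref{dlom}) for the second, and then eliminate $h'/h$ and $\lambda h^{-1}\cos\theta$ in favour of $\alpha,\beta,\gamma,\beta'$ via the identity $h'/h=(\alpha\gamma-\beta')/(\alpha+\beta)$ you wrote down---exactly what the paper does. One caveat on your final remark: the paper keeps $G$ general throughout rather than passing to the gauge $G\equiv1$, and this matters because the stated formulas (\ref{lapcpts}) carry explicit $G^{-1}$ and $G^{-2}$ factors (and intermediate steps produce $G'/G$ terms); if you set $G=1$ you must afterwards reinstate the $G$-dependence by tracking how each of $\alpha,\beta,\gamma$ and their $r$-derivatives transform under reparametrisation of $r$, which is not quite free.
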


\begin{proof}
Using the expression (\ref{sdpsi2}) for $\ast d\psi $ together with the
expression (\ref{dlom}) for $d\omega $, we have 
\begin{eqnarray*}
d\ast d\psi &=&4\left( h^{2}G^{-1}\gamma \right) ^{\prime }dr\wedge \omega
+4h^{2}G^{-1}\gamma d\omega \\
&=&-6\lambda h^{2}G^{-1}\gamma \left( \Omega +\bar{\Omega}\right) +4\left(
h^{2}G^{-1}\gamma \right) ^{\prime }dr\wedge \omega \\
&=&\frac{1}{2}\left( -12\lambda h^{-4}G^{-1}\gamma \bar{F}^{3}\right)
F^{3}\Omega +\frac{1}{2}\left( -12\lambda h^{-4}G^{-1}\gamma F^{3}\right) 
\bar{F}^{3}\bar{\Omega} \\
&&+4\left( G^{-1}h^{-2}\left( h^{2}G^{-1}\gamma \right) ^{\prime }\right)
Gh^{2}dr\wedge \omega \\
&=&\frac{1}{2}\left( -12\lambda h^{-4}\gamma G^{-1}\bar{F}^{3}\right)
F^{3}\Omega +\frac{1}{2}\left( -12\lambda h^{-4}\gamma G^{-1}F^{3}\right) 
\bar{F}^{3}\bar{\Omega} \\
&&+4\left( 2G^{-2}\frac{h^{\prime }}{h}\gamma +G^{-2}\gamma ^{\prime
}-G^{-3}G^{\prime }\gamma \right) Gh^{2}dr\wedge \omega
\end{eqnarray*}%
Hence, 
\begin{eqnarray*}
\func{Re}_{1}\left( d\ast d\psi \right) &=&-12\lambda h^{-1}G^{-1}\gamma
\cos \theta \\
&=&-12G^{-2}\gamma \left( \gamma -\frac{h^{\prime }}{h}\right) \\
\func{Im}_{1}\left( d\ast d\psi \right) &=&12\lambda h^{-1}G^{-1}\gamma \sin
\theta \\
&=&12\beta G^{-1}\gamma \\
\func{Re}_{2}\left( d\ast d\psi \right) &=&4G^{-2}\gamma \left( 2\frac{%
h^{\prime }}{h}+\frac{\gamma ^{\prime }}{\gamma }-\frac{G^{\prime }}{G}%
\right)
\end{eqnarray*}%
Similarly, using the expression (\ref{sdphi2}) for $\ast d\varphi $ together
with Proposition \ref{Propdchi}, we can work out $\ast d\ast \varphi $:%
\begin{eqnarray*}
\func{Re}_{1}\left( \ast d\ast d\varphi \right) &=&G^{-1}\left( \func{Im}%
A^{\prime }+3h^{-1}h^{\prime }\func{Im}A+\theta ^{\prime }\func{Re}%
A-3\lambda BGh^{-1}\sin \theta \right) \\
&=&-3G^{-1}\left( G^{-1}\gamma \right) ^{\prime }-9G^{-2}\frac{h^{\prime }}{h%
}\gamma +\alpha ^{2}-3\alpha \beta +12\lambda \beta h^{-1}\sin \theta \\
&=&-3G^{-2}\gamma ^{\prime }+3G^{-3}G^{\prime }\gamma -9G^{-2}\frac{%
h^{\prime }}{h}\gamma +\alpha ^{2}-3\alpha \beta +12\beta ^{2} \\
&=&-\frac{3\gamma }{G^{2}}\left( \frac{\gamma ^{\prime }}{\gamma }-\frac{%
G^{\prime }}{G}+3\frac{h^{\prime }}{h}\right) +\alpha ^{2}-3\alpha \beta
+12\beta ^{2} \\
\func{Im}_{1}\left( \ast d\ast d\varphi \right) &=&G^{-1}\left( -\func{Re}%
A^{\prime }-3h^{-1}h^{\prime }\func{Re}A+\theta ^{\prime }\func{Im}%
A-3\lambda BGh^{-1}\cos \theta \right) \\
&=&-G^{-1}\alpha ^{\prime }+3G^{-1}\beta ^{\prime }-\frac{3h^{\prime }}{Gh}%
\left( \alpha -3\beta \right) -3\alpha G^{-1}\gamma +12\lambda \beta
h^{-1}\cos \theta \\
&=&-G^{-1}\alpha ^{\prime }+3G^{-1}\beta ^{\prime }-\frac{3h^{\prime }\alpha 
}{Gh}+\frac{9\beta h^{\prime }}{Gh}-3\alpha G^{-1}\gamma +12\beta
G^{-1}\gamma -12\beta \frac{h^{\prime }}{Gh} \\
&=&-G^{-1}\alpha ^{\prime }+3G^{-1}\beta ^{\prime }-\frac{3h^{\prime }\alpha 
}{Gh}-\frac{3\beta h^{\prime }}{Gh}-3\alpha G^{-1}\gamma +12\beta
G^{-1}\gamma \\
&=&-G^{-1}\left( \alpha ^{\prime }-3\beta ^{\prime }\right) -\frac{%
3h^{\prime }}{Gh}\left( \alpha +\beta \right) -3G^{-1}\gamma \left( \alpha
-4\beta \right) \\
\func{Re}_{2}\left( \ast d\ast d\varphi \right) &=&-4\lambda h^{-1}\left(
\sin \theta \func{Re}A+\cos \theta \func{Im}A\right) \\
&=&-4\lambda h^{-1}\left( \sin \theta \left( \alpha -3\beta \right)
-3G^{-1}\gamma \cos \theta \right) \\
&=&-4\beta \left( \alpha -3\beta \right) +12G^{-2}\gamma \left( \gamma -%
\frac{h^{\prime }}{h}\right) \\
&=&-4\beta \left( \alpha -3\beta \right) +12G^{-2}\gamma \left( \gamma -%
\frac{h^{\prime }}{h}\right)
\end{eqnarray*}%
Note that similarly to (\ref{hpdh}), we can express $\frac{h^{\prime }}{h}$
in terms of $\alpha ,\beta ,\gamma $. For this, consider $\beta ^{\prime }$:%
\begin{eqnarray*}
\beta ^{\prime } &=&-\frac{h^{\prime }}{h}\frac{\lambda \sin \theta }{h}+%
\frac{\theta ^{\prime }\lambda \cos \theta }{h} \\
&=&-\frac{h^{\prime }}{h}\beta +\alpha \frac{\lambda G\cos \theta }{h} \\
&=&-\frac{h^{\prime }}{h}\beta +\alpha \left( \gamma -\frac{h^{\prime }}{h}%
\right) \\
&=&-\left( \alpha +\beta \right) \frac{h^{\prime }}{h}+\alpha \gamma \\
\frac{h^{\prime }}{h} &=&\frac{\alpha \gamma -\beta ^{\prime }}{\alpha
+\beta }
\end{eqnarray*}
\end{proof}

From this, 
\[
G^{-1}\frac{h^{\prime }}{h}\left( \alpha +\beta \right) =G^{-1}\left( \alpha
\gamma -\beta ^{\prime }\right) 
\]%
and 
\begin{equation}
\gamma -\frac{h^{\prime }}{h}=\frac{\beta \gamma +\beta ^{\prime }}{\alpha
+\beta }  \label{gamhph}
\end{equation}%
Hence, we get 
\begin{eqnarray*}
\func{Re}_{1}\left( \ast d\ast d\varphi \right) &=&-3G^{-2}\gamma \left( 
\frac{\gamma ^{\prime }}{\gamma }+3\frac{\alpha \gamma -\beta ^{\prime }}{%
\alpha +\beta }\right) +\alpha ^{2}-3\alpha \beta +12\beta ^{2} \\
\func{Im}_{1}\left( \ast d\ast d\varphi \right) &=&-G^{-1}\left( \alpha
^{\prime }-3\beta ^{\prime }\right) -3G^{-1}\left( \alpha \gamma -\beta
^{\prime }\right) -3G^{-1}\gamma \left( \alpha -4\beta \right) \\
&=&G^{-1}\left( 6\beta ^{\prime }-\alpha ^{\prime }\right) +6G^{-1}\gamma
\left( 2\beta -\alpha \right) \\
\func{Re}_{2}\left( \ast d\ast d\varphi \right) &=&-4\beta \left( \alpha
-3\beta \right) +12G^{-2}\gamma \left( \frac{\beta \gamma +\beta ^{\prime }}{%
\alpha +\beta }\right)
\end{eqnarray*}%
and 
\begin{eqnarray*}
\func{Re}_{1}\left( d\ast d\psi \right) &=&-12G^{-2}\gamma \left( \frac{%
\beta \gamma +\beta ^{\prime }}{\alpha +\beta }\right) \\
\func{Im}_{1}\left( d\ast d\psi \right) &=&12\beta G^{-1}\gamma \\
\func{Re}_{2}\left( d\ast d\psi \right) &=&4\gamma G^{-2}\left( \frac{\gamma
^{\prime }}{\gamma }+2\frac{\alpha \gamma -\beta ^{\prime }}{\alpha +\beta }%
\right)
\end{eqnarray*}%
Combining, we finally obtain the expressions (\ref{lapcpts}) for the
components of the Laplacian.

By setting $\beta =0$ we obtain the Laplacian in the Calabi-Yau case.

\begin{corollary}
Suppose $\lambda =0$, so that $M^{6}$ is Calabi-Yau. The components of the
Laplacian of $\varphi $ are then given by: 
\begin{eqnarray}
\func{Re}_{1}\left( \Delta \varphi \right) &=&-3\gamma G^{-2}\left( \frac{%
\gamma ^{\prime }}{\gamma }+3\gamma \right) +\alpha ^{2} \\
\func{Im}_{2}\left( \Delta \varphi \right) &=&-\alpha G^{-1}\left( \frac{%
\alpha ^{\prime }}{\alpha }+6\gamma \right) \\
\func{Re}_{2}\left( \Delta \varphi \right) &=&-4\gamma G^{-2}\left( \frac{%
\gamma ^{\prime }}{\gamma }+2\gamma \right)
\end{eqnarray}%
where $\alpha =G^{-1}\theta ^{\prime }$ and $\gamma =h^{-1}h^{\prime }$.
\end{corollary}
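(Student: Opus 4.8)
The plan is to obtain the corollary as a direct specialization of Theorem \ref{Thmlapcpts}. The hypothesis that $M^{6}$ is Calabi--Yau means $\lambda =0$ in the $SU(3)$-structure equations (\ref{su3struct}), and then the definitions (\ref{abc}) force $\beta =\lambda h^{-1}\sin \theta =0$ identically on $L$ (hence also $\beta ^{\prime }=0$), while $\gamma $ reduces to $h^{-1}h^{\prime }$ and $\alpha $ stays equal to $G^{-1}\theta ^{\prime }$. So the entire task is to set $\beta =\beta ^{\prime }=0$ in the three component formulas (\ref{lapcpts}) and simplify.

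First I would handle $\func{Re}_{1}(\Delta \varphi )$: with $\beta =\beta ^{\prime }=0$ the bracketed fraction $\frac{\gamma (4\beta -3\alpha )+7\beta ^{\prime }}{\alpha +\beta }$ collapses to $-3\gamma $ and the term $-3\beta (\alpha -4\beta )$ vanishes, leaving $3\gamma G^{-2}\bigl(-\tfrac{\gamma ^{\prime }}{\gamma }-3\gamma \bigr)+\alpha ^{2}=-3\gamma G^{-2}\bigl(\tfrac{\gamma ^{\prime }}{\gamma }+3\gamma \bigr)+\alpha ^{2}$. Next, $\func{Im}_{2}(\Delta \varphi )$ is immediate: $G^{-1}(6\beta ^{\prime }-\alpha ^{\prime })-6\gamma G^{-1}\alpha =-\alpha G^{-1}\bigl(\tfrac{\alpha ^{\prime }}{\alpha }+6\gamma \bigr)$. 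Finally, in $\func{Re}_{2}(\Delta \varphi )$ the fraction $\frac{\gamma (3\beta -2\alpha )+5\beta ^{\prime }}{\alpha +\beta }$ collapses to $-2\gamma $ and the $-4\beta (\alpha -3\beta )$ term drops, giving $-4\gamma G^{-2}\bigl(\tfrac{\gamma ^{\prime }}{\gamma }+2\gamma \bigr)$. These are precisely the claimed expressions.

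There is no real obstacle, since all the work was already done in Theorem \ref{Thmlapcpts}; the only points deserving a line of care are that collapsing $\tfrac{-3\alpha \gamma }{\alpha }$ and $\tfrac{-2\alpha \gamma }{\alpha }$ to $-3\gamma $ and $-2\gamma $ presupposes $\alpha $ nowhere zero (consistent with the $\alpha ^{\prime }/\alpha $ appearing in the stated formula), and that putting $\lambda =0$ is consistent with all the relations used inside the proof of Theorem \ref{Thmlapcpts} --- in particular the identity $h^{-1}h^{\prime }=(\alpha \gamma -\beta ^{\prime })/(\alpha +\beta )$ degenerates correctly to $h^{-1}h^{\prime }=\gamma $, which is just the $\lambda =0$ form of the definition (\ref{gamma}) of $\gamma $. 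If one preferred a self-contained argument it would actually be shorter than the general case: for $\lambda =0$ one has $d\omega =0$ and $d\Omega =0$, so $\ast d\psi =4\gamma G^{-1}h^{2}\omega $ gives $d\ast d\psi =4(\gamma G^{-1}h^{2})^{\prime }\,dr\wedge \omega $ with no $\Omega ,\bar{\Omega}$ part, and $\ast d\varphi =\tfrac{1}{2}AF^{3}\Omega +\tfrac{1}{2}\bar{A}\bar{F}^{3}\bar{\Omega}$ with $A=\alpha -3iG^{-1}\gamma $ and $B=0$; feeding these into Proposition \ref{Propdchi} and assembling $\Delta \varphi =-d\ast d\psi +\ast d\ast d\varphi $ reproduces the three components directly.
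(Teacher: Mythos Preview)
Your proposal is correct and takes the same approach as the paper: the paper simply remarks ``By setting $\beta =0$ we obtain the Laplacian in the Calabi-Yau case'' and states the corollary without further proof, which is exactly the specialization of Theorem~\ref{Thmlapcpts} that you carry out. Your additional care about the $\alpha\neq 0$ caveat and the consistency of the auxiliary identity $h^{-1}h'=(\alpha\gamma-\beta')/(\alpha+\beta)$ under $\lambda=0$ goes a bit beyond what the paper makes explicit, but the underlying argument is identical.
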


In the case when the $G_{2}$-structure is co-closed, we get the components
of the Laplacian by setting $\gamma =0.$

\begin{corollary}
\label{CorrLapgam0}Suppose the $G_{2}$-structure $\varphi $ is co-closed, so
that $\gamma =0$. Then the components of the Laplacian of $\varphi $ are
given by 
\begin{subequations}%
\label{lapcptsgam0}%
\begin{eqnarray}
\func{Re}_{1}\left( \Delta \varphi \right) &=&\alpha ^{2}-3\beta \alpha
+12\beta ^{2}  \label{re1lap} \\
\func{Im}_{2}\left( \Delta \varphi \right) &=&G^{-1}\left( 6\beta ^{\prime
}-\alpha ^{\prime }\right)  \label{im1lap} \\
\func{Re}_{2}\left( \Delta \varphi \right) &=&-4\beta \left( \alpha -3\beta
\right)  \label{re2lap}
\end{eqnarray}%
\end{subequations}
Moreover, if $M^{6}$ is Calabi-Yau, then 
\begin{subequations}%
\begin{eqnarray}
\func{Re}_{1}\left( \Delta \varphi \right) &=&\alpha ^{2} \\
\func{Im}_{2}\left( \Delta \varphi \right) &=&-G^{-1}\alpha ^{\prime } \\
\func{Re}_{2}\left( \Delta \varphi \right) &=&0
\end{eqnarray}%
\end{subequations}%
\end{corollary}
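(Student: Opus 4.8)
The plan is to specialize Theorem~\ref{Thmlapcpts} to the case $\gamma \equiv 0$. This is immediate in spirit but needs a little care, because the formulae (\ref{lapcpts}) are written using the ratio $\gamma'/\gamma$, which is not literally meaningful when $\gamma$ vanishes identically. The clean way around this is to return to the intermediate expressions obtained inside the proof of Theorem~\ref{Thmlapcpts}, where $\gamma$ and $\gamma'$ enter only polynomially.

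First I would note that $\gamma \equiv 0$ on $L$ forces $\gamma' \equiv 0$ as well. By (\ref{sdpsi2}) this gives $\ast d\psi = 0$, hence $d\psi = 0$ and $d\ast d\psi = 0$; equivalently, the $G_{2}$-structure is co-closed and $\Delta\varphi = d^{\ast}d\varphi = \ast d\ast d\varphi$. (Alternatively, one may inspect the three components $\func{Re}_{1}$, $\func{Im}_{1}$, $\func{Re}_{2}$ of $d\ast d\psi$ displayed in the proof of Theorem~\ref{Thmlapcpts}: each carries an overall factor $\gamma$ or $\gamma'$, so each vanishes.)

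Next I would read off the components of $\ast d\ast d\varphi$ from that same proof, substituting $\gamma = \gamma' = 0$ throughout. The relation $h'/h = (\alpha\gamma - \beta')/(\alpha+\beta)$ degenerates to $h'/h = -\beta'/(\alpha+\beta)$, so the term $-\tfrac{3h'}{Gh}(\alpha+\beta)$ appearing in $\func{Im}_{1}(\ast d\ast d\varphi)$ collapses to $3\beta'/G$, while the $\gamma$-terms in $\func{Re}_{1}$ and $\func{Re}_{2}$ drop out entirely. This produces precisely the expressions (\ref{lapcptsgam0}), once one rewrites $\alpha^{2} - 3\alpha\beta + 12\beta^{2}$ as $\alpha^{2} - 3\beta\alpha + 12\beta^{2}$.

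Finally, for the Calabi--Yau specialization I would invoke the observation made after Theorem~\ref{ThmTorsionCpts} that $\lambda = 0$ implies $\beta = \lambda h^{-1}\sin\theta = 0$ (and hence $\beta' = 0$). Substituting $\beta = \beta' = 0$ into (\ref{lapcptsgam0}) immediately yields $\func{Re}_{1}(\Delta\varphi) = \alpha^{2}$, $\func{Im}_{2}(\Delta\varphi) = -G^{-1}\alpha'$, and $\func{Re}_{2}(\Delta\varphi) = 0$. There is no genuine obstacle here; the only point that requires attention is the bookkeeping of the $\gamma'/\gamma$ terms when passing to the limit $\gamma = 0$, which is why it is cleanest to work from the pre-combined formulae rather than directly from (\ref{lapcpts}).
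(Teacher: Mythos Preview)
Your proposal is correct and follows essentially the same route as the paper, which simply states that the corollary is obtained from Theorem~\ref{Thmlapcpts} by setting $\gamma=0$. Your extra care about the formal $\gamma'/\gamma$ terms is justified but ultimately cosmetic: in (\ref{lapcpts}) every such term appears multiplied by an overall factor of $\gamma$, so that $\gamma\cdot(\gamma'/\gamma)=\gamma'$ and the expressions are in fact polynomial in $\gamma,\gamma'$; hence the substitution $\gamma=\gamma'=0$ can be performed directly without retreating to the intermediate formulae.
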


\begin{remark}
Expressions for the Laplacian of a warped product $G_{2}$-structure have
been first computed by Karigiannis, McKay and Tsui in \cite%
{KarigiannisMcKayTsui}. The expressions in \cite{KarigiannisMcKayTsui} were
given for both the Calabi-Yau and nearly K\"{a}hler cases, but only when the
co-closed condition was imposed.
\end{remark}

It is a well-known consequence of Hodge's Theorem that on a compact manifold
a harmonic form is both closed and co-closed. For non-compact manifolds this
may not be true in general. However from Corollary \ref{CorrLapgam0}, it is
easy to see that if $\varphi $ is co-closed and harmonic, then $\alpha
=\beta =0,$ and hence it is torsion-free, and thus also closed.

\section{Flows of warped $G_{2}$-structures}

\setcounter{equation}{0}\label{secflow}Suppose now $\varphi \left( t\right) $
is a family of $G_{2}$-structures on $M^{7}$ defined for $t\in \lbrack 0,T)$%
, such that for every $t,$ $\varphi \left( t\right) $ is of the form (\ref%
{g2phi}). In particular, we will assume that the underlying $SU\left(
3\right) $ structure is constant and only the parameters $G,h,\theta $ of
the warped product depend on $t.$ Since we will be interested in flows of
the dual form $\psi $, we need to know how the evolution of $\psi \left(
t\right) $, as well as the evolution of the quantities $\alpha ,\beta
,\gamma $, is related to the time evolution of $G,h,\theta $.

\begin{lemma}
\label{LemPsidot}Suppose $G\left( t\right) ,h\left( t\right) $ and $\theta
\left( t\right) $ define a time-dependent family of $G_{2}$-structures $%
\varphi \left( t\right) $ via (\ref{g2phi}). Then, for $\psi \left( t\right)
=\ast _{t}\varphi \left( t\right) $, we have%
\begin{subequations}%
\label{psidot}%
\begin{eqnarray}
\func{Re}_{1}\left( \ast _{t}\frac{\partial }{\partial t}\psi \right)
&=&G^{-1}\dot{G}+3h^{-1}\dot{h} \\
\func{Im}_{1}\left( \ast _{t}\frac{\partial }{\partial t}\psi \right) &=&%
\dot{\theta} \\
\func{Re}_{2}\left( \ast _{t}\frac{\partial }{\partial t}\psi \right)
&=&4h^{-1}\dot{h}
\end{eqnarray}%
\end{subequations}
where the dot denotes time derivative. Also, 
\begin{subequations}%
\label{abcdot} 
\begin{eqnarray}
\dot{\alpha} &=&\frac{\left( \dot{\theta}\right) ^{\prime }}{G}-\alpha \frac{%
\dot{G}}{G} \\
\dot{\beta} &=&\dot{\theta}G^{-1}\left( \frac{\beta \gamma +\beta ^{\prime }%
}{\alpha +\beta }\right) -\frac{\dot{h}}{h}\beta \\
\dot{\gamma} &=&\left( \frac{\dot{h}}{h}\right) ^{\prime }+\left( \frac{\dot{%
G}}{G}-\frac{\dot{h}}{h}\right) \left( \frac{\beta \gamma +\beta ^{\prime }}{%
\alpha +\beta }\right) -\dot{\theta}G\beta
\end{eqnarray}%
\end{subequations}%
\end{lemma}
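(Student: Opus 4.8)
The plan is to compute both halves of the statement by differentiating the explicit coordinate expressions directly, exploiting that the underlying $SU(3)$-structure forms $\omega ,\Omega ,\bar{\Omega}$ and the coordinate $dr$ are all $t$-independent, so that every time derivative acts only on $G,h,\theta $, equivalently on $F=he^{i\theta /3}$ with $F^{3}=h^{3}e^{i\theta }$. First I would differentiate the formula for $\psi $ in (\ref{G2warpedstruct2}) term by term: using $\frac{d}{dt}F^{3}=\left( 3h^{-1}\dot{h}+i\dot{\theta}\right) F^{3}$ one gets $\frac{d}{dt}(GF^{3})=G\left( G^{-1}\dot{G}+3h^{-1}\dot{h}+i\dot{\theta}\right) F^{3}$ and $\frac{d}{dt}h^{4}=4h^{3}\dot{h}$, so that $\frac{\partial \psi }{\partial t}$ is an $SU(3)$-equivariant $4$-form. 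Comparing this $4$-form with the expression (\ref{schi}) for $\ast \chi $ of a general $SU(3)$-equivariant $3$-form $\chi $ of the type (\ref{gen3form}), one reads off that $\frac{\partial \psi }{\partial t}=\ast _{t}\chi $ with $\func{Re}A=G^{-1}\dot{G}+3h^{-1}\dot{h}$, $\func{Im}A=\dot{\theta}$ and $B=4h^{-1}\dot{h}$. Since $\ast _{t}$ squares to the identity on $M^{7}$, applying $\ast _{t}$ gives $\ast _{t}\frac{\partial \psi }{\partial t}=\chi $, and then (\ref{psidot}) is just the notation (\ref{chinotation}) applied to $\chi $. One could instead apply (\ref{hodgeduals}) termwise, but then the orientation signs from moving $dr$ past the $3$-forms $\Omega ,\bar{\Omega}$ must be tracked, so routing through (\ref{schi}) is cleaner.

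For the evolution of $\alpha ,\beta ,\gamma $ in (\ref{abcdot}), the plan is to differentiate the definitions (\ref{abc}) in $t$, using repeatedly that $\partial _{t}$ and $\partial _{r}=(\ )^{\prime }$ commute, so in particular $\frac{d}{dt}(\theta ^{\prime })=(\dot{\theta})^{\prime }$ and $\frac{d}{dt}(h^{\prime }/h)=(\dot{h}/h)^{\prime }$. The computation of $\dot{\alpha}$ is immediate from $\alpha =\theta ^{\prime }/G$. For $\dot{\beta}$ one differentiates $\beta =\lambda h^{-1}\sin \theta $ to get $\dot{\beta}=-\frac{\dot{h}}{h}\beta +\lambda h^{-1}\dot{\theta}\cos \theta $, and then eliminates $\cos \theta $ via the identity $\gamma -h^{\prime }/h=(\beta \gamma +\beta ^{\prime })/(\alpha +\beta )$ already derived in the discussion following Theorem \ref{Thmlapcpts} (equation (\ref{gamhph})), which gives $\lambda h^{-1}G\cos \theta =(\beta \gamma +\beta ^{\prime })/(\alpha +\beta )$. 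For $\dot{\gamma}$ one differentiates $\gamma =h^{\prime }/h+\lambda h^{-1}G\cos \theta $, uses (\ref{gamhph}) again for the $\cos \theta $ term and $\lambda \sin \theta =h\beta $ from (\ref{beta}) for the $\sin \theta $ term, collects, and rewrites $\frac{d}{dt}(h^{\prime }/h)$ as $(\dot{h}/h)^{\prime }$ by commutativity. Substituting yields the three stated formulae.

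I do not expect a genuine obstacle; the difficulty is purely organisational. The one step that needs care in the first part is recognising $\frac{\partial \psi }{\partial t}$ as $\ast _{t}$ of a form of type (\ref{gen3form}) without dropping the factors of $G$ or the orientation signs in (\ref{schi}), since an error there contaminates $\func{Re}_{1}$ and $\func{Re}_{2}$. In the second part the analogous point is to eliminate every bare occurrence of $h,\theta ,\lambda $ in favour of $\alpha ,\beta ,\gamma $ using (\ref{beta}), (\ref{gamma}) and (\ref{gamhph}), so that the final expressions close up in $\{\alpha ,\beta ,\gamma ,\dot{G}/G,\dot{h}/h,\dot{\theta}\}$ exactly as claimed.
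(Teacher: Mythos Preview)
Your proposal is correct and follows essentially the same route as the paper: differentiate $\psi$ term by term in the fixed $SU(3)$-basis, identify the components of $\ast_t\dot\psi$, and then differentiate the definitions (\ref{abc}) of $\alpha,\beta,\gamma$ and substitute (\ref{gamhph}). The only cosmetic difference is that the paper applies the Hodge star to $\dot\psi$ directly via (\ref{hodgeduals}), whereas you pattern-match $\dot\psi$ against (\ref{schi}) and then invoke $\ast_t^2=1$; both arrive at the same $A$ and $B$.
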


\begin{proof}
Consider $\dot{\psi}:$%
\begin{eqnarray*}
\frac{\partial }{\partial t}\psi &=&\frac{\partial }{\partial t}\left( \frac{%
1}{2}h^{4}\omega ^{2}-\frac{iGF^{3}}{2}\Omega \wedge dr+\frac{iG\bar{F}^{3}}{%
2}\bar{\Omega}\wedge dr\right) \\
&=&2h^{3}\dot{h}\omega ^{2}-\frac{i}{2}\frac{\partial }{\partial t}\left(
GF^{3}\right) \Omega \wedge dr+\frac{i}{2}\frac{\partial }{\partial t}\left(
G\bar{F}^{3}\right) \bar{\Omega}\wedge dr \\
&=&2h^{3}\dot{h}\omega ^{2}-\frac{i}{2}\left( \dot{G}+h^{-6}G\bar{F}^{3}%
\frac{\partial }{\partial t}F^{3}\right) F^{3}\Omega \wedge dr+\frac{i}{2}%
\left( \dot{G}+h^{-6}GF^{3}\frac{\partial }{\partial t}\bar{F}^{3}\right) 
\bar{F}^{3}\bar{\Omega}\wedge dr
\end{eqnarray*}%
Then applying the Hodge star: 
\[
\ast \frac{\partial }{\partial t}\psi =4\left( h^{-1}\dot{h}\right)
Gh^{2}dr\wedge \omega +\frac{1}{2}\left( G^{-1}\dot{G}+h^{-6}\bar{F}^{3}%
\frac{\partial }{\partial t}F^{3}\right) F^{3}\Omega +\frac{1}{2}\left(
G^{-1}\dot{G}+h^{-6}F^{3}\frac{\partial }{\partial t}\bar{F}^{3}\right) \bar{%
F}^{3}\bar{\Omega} 
\]%
From this we read off the components $\func{Re}_{1}\psi $, $\func{Im}%
_{1}\psi $ and $\func{Re}_{2}\psi $ as in (\ref{psidot}).

To compute the time derivatives of $\alpha ,\beta ,\gamma ,$ we just
differentiate the expressions (\ref{abc}):%
\begin{eqnarray*}
\dot{\alpha} &=&\frac{\partial }{\partial t}\left( \frac{\theta ^{\prime }}{G%
}\right) \\
&=&\frac{\left( \dot{\theta}\right) ^{\prime }}{G}-\alpha \frac{\dot{G}}{G}
\\
\dot{\beta} &=&\frac{\lambda \dot{\theta}\cos \theta }{h}-\frac{\lambda \dot{%
h}\sin \theta }{h^{2}} \\
&=&\dot{\theta}G^{-1}\left( \gamma -\frac{h^{\prime }}{h}\right) -\frac{\dot{%
h}}{h}\beta \\
\dot{\gamma} &=&\frac{\left( \dot{h}\right) ^{\prime }}{h}-\frac{\dot{h}%
h^{\prime }}{h^{2}}+\frac{\lambda \dot{G}\cos \theta }{h} \\
&&-\frac{\lambda \dot{h}G\cos \theta }{h^{2}}-\frac{\lambda \dot{\theta}%
G\sin \theta }{h} \\
&=&\left( \frac{\dot{h}}{h}\right) ^{\prime }+\left( \frac{\dot{G}}{G}-\frac{%
\dot{h}}{h}\right) \left( \gamma -\frac{h^{\prime }}{h}\right) -\dot{\theta}%
G\beta
\end{eqnarray*}%
and apply the expression (\ref{gamhph}) for $\gamma -\frac{h^{\prime }}{h}$
to get (\ref{abcdot}).
\end{proof}

\subsection{Modified Laplacian coflow}

We will now consider the modified Laplacian coflow of co-closed $G_{2}$%
-stuctures. Let us now assume that $d\psi =0$ and thus $\gamma =0$. We will
write the modified coflow as 
\begin{equation}
\frac{\partial \psi }{\partial t}=\Delta _{\psi }\psi +kd\left( \left( C-%
\func{Tr}T\right) \varphi \right)  \label{modcoflowk}
\end{equation}%
In particular, this flow preserves the condition $d\psi =0$ and hence $%
\gamma =0$. In (\cite{GrigorianCoflow}), this flow was considered only for $%
k=2$. This constant was chosen in order for the linearization of (\ref%
{modcoflowk}) to be the standard Laplacian plus a Lie derivative term.
However, it can be seen from the calculations in (\cite{GrigorianCoflow})
that in fact for any $k>1$ the equation (\ref{modcoflowk}) will be weakly
parabolic in the direction of closed forms and hence the same reasoning can
be using to show short-time existence and uniqueness for the flow (\ref%
{modcoflowk}) with $k>1.$ Also note that the case $k=0$ corresponds to a
standard Laplacian flow.

\begin{theorem}
Suppose we have a co-closed $G_{2}$-structure on $M^{7},$ which is given by (%
\ref{G2warpedstruct2}). Then the flow (\ref{modcoflowk}) is equivalent to
the following evolution equations for warped product parameters $G,h,\theta $%
:%
\begin{subequations}%
\label{lapflowdots}%
\begin{eqnarray}
\frac{\dot{G}}{G} &=&\alpha ^{2}+3\beta ^{2}+k\left( C-\func{Tr}T\right)
\alpha  \nonumber \\
&=&\left( 1-k\right) \alpha ^{2}+3\beta ^{2}+6k\alpha \beta +kC\alpha \\
\dot{\theta} &=&\left( k-1\right) G^{-1}\left( \func{Tr}T\right) ^{\prime } 
\nonumber \\
&=&\left( k-1\right) G^{-1}\left( \alpha -6\beta \right) ^{\prime } \\
\frac{\dot{h}}{h} &=&-\beta \left( \alpha -3\beta \right) -k\left( C-\func{Tr%
}T\right) \beta  \nonumber \\
&=&3\left( 1-2k\right) \beta ^{2}+\left( k-1\right) \alpha \beta -kC\beta
\end{eqnarray}%
\end{subequations}
where $\alpha $ and $\beta $ are given by (\ref{abc}). Moreover, the
evolution of $\alpha ,\beta ,\gamma $ is given by 
\begin{subequations}%
\label{lapflowabcdots} 
\begin{eqnarray}
\dot{\alpha} &=&\left( k-1\right) G^{-2}\left( -\frac{G^{\prime }}{G}\left( 
\func{Tr}T\right) ^{\prime }+\left( \func{Tr}T\right) ^{\prime \prime
}\right) +\left( k-1\right) \alpha ^{3}-6k\beta \alpha ^{2}-3\beta
^{2}\alpha -kC\alpha ^{2} \\
\dot{\beta} &=&\left( k-1\right) G^{-2}\left( \func{Tr}T\right) ^{\prime
}\left( \frac{\beta ^{\prime }}{\alpha +\beta }\right) +\left( 1-k\right)
\beta ^{2}\alpha +3\left( 2k-1\right) \beta ^{3}+kC\beta ^{2}
\end{eqnarray}%
\end{subequations}
where $\func{Tr}T=\alpha -6\beta $.
\end{theorem}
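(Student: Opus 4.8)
The plan is to apply the Hodge star $\ast=\ast_t$ to the flow equation $(\ref{modcoflowk})$, turning it into an identity between $SU(3)$-equivariant $3$-forms, and then to match the three real component functions $\func{Re}_1,\func{Im}_1,\func{Re}_2$ of $(\ref{chinotation})$. This is legitimate because, with the underlying $SU(3)$-structure $(\omega,\Omega)$ held fixed and only $G,h,\theta$ depending on $t$, the $4$-form $\frac{\partial\psi}{\partial t}$ is a combination of $\omega^2$, $\Omega\wedge dr$, $\bar\Omega\wedge dr$ and hence $SU(3)$-equivariant, as are $\Delta_\psi\psi$ and $d((C-\func{Tr}T)\varphi)$; the space of $SU(3)$-equivariant $3$-forms is three-dimensional, so matching the three components is equivalent to the full equation. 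I first note that the right-hand side of $(\ref{modcoflowk})$ is exact whenever $d\psi=0$ (the first term equals $dd^\ast\psi$, the second is visibly exact), so the flow keeps $d\psi=0$, i.e. $\gamma\equiv 0$ and $\dot\gamma\equiv 0$; this justifies working throughout with the co-closed formulae.

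Next I would evaluate the two terms on the right. Since $\psi$ and $\varphi$ determine the same metric and the Hodge Laplacian commutes with $\ast$, we have $\ast\Delta_\psi\psi=\Delta_\varphi\varphi$, whose components in the co-closed case are supplied by Corollary \ref{CorrLapgam0}. For the second term, $C$ is constant and $\func{Tr}T=\alpha-6\beta$ depends on $r$ only, so $d((C-\func{Tr}T)\varphi)=-(\func{Tr}T)'\,dr\wedge\varphi+(C-\func{Tr}T)\,d\varphi$. Using $(\ref{hodgeduals})$ I get $\ast(dr\wedge\varphi)=-\tfrac{i}{2}G^{-1}F^{3}\Omega+\tfrac{i}{2}G^{-1}\bar F^{3}\bar\Omega$, which in the notation of $(\ref{gen3form})$ has $(\func{Re}_1,\func{Im}_1,\func{Re}_2)=(0,-G^{-1},0)$, while $\ast d\varphi$ is given by $(\ref{sdphi2})$ with $\gamma=0$, i.e. $(\func{Re}_1,\func{Im}_1,\func{Re}_2)=(\alpha-3\beta,\,0,\,-4\beta)$. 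Hence the correction term $k\ast d((C-\func{Tr}T)\varphi)$ contributes $\big(k(C-\func{Tr}T)(\alpha-3\beta),\,kG^{-1}(\func{Tr}T)',\,-4k(C-\func{Tr}T)\beta\big)$ to $(\func{Re}_1,\func{Im}_1,\func{Re}_2)$, while the left-hand side contributes $\big(G^{-1}\dot G+3h^{-1}\dot h,\,\dot\theta,\,4h^{-1}\dot h\big)$ by Lemma \ref{LemPsidot}.

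Equating the three components then yields the evolution equations directly: the $\func{Re}_2$ equation gives $\dot h/h=-\beta(\alpha-3\beta)-k(C-\func{Tr}T)\beta$; the imaginary-part equation gives $\dot\theta=G^{-1}(6\beta'-\alpha')+kG^{-1}(\func{Tr}T)'=(k-1)G^{-1}(\func{Tr}T)'$ after substituting $(\func{Tr}T)'=\alpha'-6\beta'$; and subtracting $3\dot h/h$ from the $\func{Re}_1$ equation leaves $\dot G/G=\alpha^{2}+3\beta^{2}+k(C-\func{Tr}T)\alpha$. Rewriting these with $C-\func{Tr}T=C-\alpha+6\beta$ gives the polynomial forms in $(\ref{lapflowdots})$. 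Finally, the formulae $(\ref{lapflowabcdots})$ for $\dot\alpha$ and $\dot\beta$ follow by inserting the three expressions just obtained, together with $\gamma=\dot\gamma=0$, into $(\ref{abcdot})$ of Lemma \ref{LemPsidot} and simplifying; e.g. $\dot\beta=\dot\theta G^{-1}\beta'/(\alpha+\beta)-(\dot h/h)\beta$ expands immediately to the stated expression, and $\dot\alpha=(\dot\theta)'/G-\alpha\dot G/G$ with $(\dot\theta)'=(k-1)\big(-G^{-2}G'(\func{Tr}T)'+G^{-1}(\func{Tr}T)''\big)$ gives the stated $\dot\alpha$.

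The conceptual content is entirely contained in Propositions \ref{PropChiG2cpts}--\ref{Propdschi}, Corollary \ref{CorrLapgam0} and Lemma \ref{LemPsidot}; the only place where care is needed — and the main place an error could creep in — is the Hodge-star bookkeeping for the correction term (the signs and $F^3$ phase factors in $\ast(dr\wedge\varphi)$ and in $\ast d\varphi$) and the subsequent algebraic repackaging of $C-\func{Tr}T$ in terms of $\alpha,\beta$. Everything else is a direct substitution.
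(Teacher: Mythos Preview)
Your proposal is correct and follows essentially the same route as the paper: match the three $SU(3)$-components $(\func{Re}_1,\func{Im}_1,\func{Re}_2)$ of $\ast(\partial_t\psi)$, $\Delta_\varphi\varphi$, and $k\ast d((C-\func{Tr}T)\varphi)$ using Lemma~\ref{LemPsidot} and Corollary~\ref{CorrLapgam0}, then substitute into~(\ref{abcdot}). The only cosmetic difference is that the paper obtains the components of the correction term by feeding $(C-\func{Tr}T)\varphi$ (with $\func{Re}A=\func{Re}_2=C-\func{Tr}T$, $\func{Im}A=0$) directly into Proposition~\ref{Propdchi}, whereas you first expand $d((C-\func{Tr}T)\varphi)$ by Leibniz and then Hodge-star the pieces; the resulting components agree and the remaining algebra is identical.
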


\begin{proof}
We already know the components of $\Delta \psi =\ast \Delta \varphi $ and $%
\frac{\partial \psi }{\partial t}$ from Corollary \ref{CorrLapgam0} and
Lemma \ref{LemPsidot}, respectively. So we just need decompose the
additional part $kd\left( \left( C-\func{Tr}T\right) \varphi \right) $ into $%
\func{Re}_{1},$ $\func{Im}_{1}$ and $\func{Re}_{2}$ components. Consider 
\[
d\left( \left( C-\func{Tr}T\right) \varphi \right) 
\]%
Thus, 
\begin{eqnarray*}
\func{Re}_{1}\left( \left( C-\func{Tr}T\right) \varphi \right) &=&\left( C-%
\func{Tr}T\right) \\
\func{Im}_{1}\left( \left( C-\func{Tr}T\right) \varphi \right) &=&0 \\
\func{Re}_{2}\left( \left( C-\func{Tr}T\right) \varphi \right) &=&\left( C-%
\func{Tr}T\right)
\end{eqnarray*}%
Hence, using Proposition \ref{Propdchi} with $\func{Re}A=C-\func{Tr}T,$ $%
\func{Im}A=0$ and $B=C-\func{Tr}T,$ we get 
\begin{eqnarray*}
\func{Re}_{1}\left( \ast d\left( \left( C-\func{Tr}T\right) \varphi \right)
\right) &=&G^{-1}\left( \func{Im}A^{\prime }+3h^{-1}h^{\prime }\func{Im}%
A+\theta ^{\prime }\func{Re}A-3\lambda BGh^{-1}\sin \theta \right) \\
&=&\left( C-\func{Tr}T\right) G^{-1}\left( \theta ^{\prime }-3\lambda
Gh^{-1}\sin \theta \right) \\
&=&\left( C-\func{Tr}T\right) \left( \alpha -3\beta \right) \\
\func{Im}_{1}\left( \ast d\left( \left( C-\func{Tr}T\right) \varphi \right)
\right) &=&G^{-1}\left( -\func{Re}A^{\prime }-3h^{-1}h^{\prime }\func{Re}%
A+\theta ^{\prime }\func{Im}A-3\lambda BGh^{-1}\cos \theta \right) \\
&=&G^{-1}\left( \func{Tr}T\right) ^{\prime }-3\left( C-\func{Tr}T\right)
\left( G^{-1}\frac{h^{\prime }}{h}-\lambda h^{-1}\cos \theta \right) \\
&=&G^{-1}\left( \func{Tr}T\right) ^{\prime }-3\left( C-\func{Tr}T\right)
\gamma \\
&=&G^{-1}\left( \func{Tr}T\right) ^{\prime } \\
\func{Re}_{2}\left( \ast d\left( \left( C-\func{Tr}T\right) \varphi \right)
\right) &=&-4\lambda h^{-1}\left( \sin \theta \func{Re}A+\cos \theta \func{Im%
}A\right) \\
&=&-4\left( C-\func{Tr}T\right) \beta
\end{eqnarray*}%
where we have also used the fact that $\gamma =0.$ Now also using Corollary %
\ref{CorrLapgam0} we can write 
\begin{subequations}
\begin{eqnarray}
\func{Re}_{1}\left( \Delta \varphi +k\ast d\left( \left( C-\func{Tr}T\right)
\varphi \right) \right) &=&\alpha ^{2}-3\beta \alpha +12\beta ^{2}+k\left( C-%
\func{Tr}T\right) \left( \alpha -3\beta \right) \\
\func{Im}_{1}\left( \Delta \varphi +k\ast d\left( \left( C-\func{Tr}T\right)
\varphi \right) \right) &=&G^{-1}\left( 6\beta ^{\prime }-\alpha ^{\prime
}\right) +kG^{-1}\left( \func{Tr}T\right) ^{\prime } \\
&=&\left( k-1\right) G^{-1}\left( \func{Tr}T\right) ^{\prime } \\
\func{Re}_{2}\left( \Delta \varphi +k\ast d\left( \left( C-\func{Tr}T\right)
\varphi \right) \right) &=&-4\beta \left( \alpha -3\beta \right) -4k\left( C-%
\func{Tr}T\right) \beta
\end{eqnarray}%
\end{subequations}%
Therefore, from Lemma \ref{LemPsidot}, the flow (\ref{modcoflowk}) is
equivalent to 
\begin{subequations}
\begin{eqnarray}
G^{-1}\dot{G}+3h^{-1}\dot{h} &=&\alpha ^{2}-3\beta \alpha +12\beta
^{2}+k\left( C-\func{Tr}T\right) \left( \alpha -3\beta \right) \\
\dot{\theta} &=&\left( k-1\right) G^{-1}\left( \func{Tr}T\right) ^{\prime }
\\
4h^{-1}\dot{h} &=&-4\beta \left( \alpha -3\beta \right) -4k\left( C-\func{Tr}%
T\right) \beta
\end{eqnarray}%
\end{subequations}%
This immediately gives the expressions (\ref{lapflowdots}). To get the
expressions (\ref{lapflowabcdots}) for $\dot{\alpha}$ and $\dot{\beta}$, we
just substitute the expressions for $\dot{G}$,$\dot{h}$ and $\dot{\theta}$
into (\ref{abcdot}) with $\gamma =0$:%
\begin{eqnarray*}
\dot{\alpha} &=&\frac{\left( \dot{\theta}\right) ^{\prime }}{G}-\alpha \frac{%
\dot{G}}{G} \\
&=&-\left( k-1\right) G^{-3}G^{\prime }\left( \func{Tr}T\right) ^{\prime
}+\left( k-1\right) G^{-2}\left( \func{Tr}T\right) ^{\prime \prime }-\alpha
^{3}-3\alpha \beta ^{2}-k\left( C-\func{Tr}T\right) \alpha ^{2} \\
&=&\left( k-1\right) G^{-2}\left( -\frac{G^{\prime }}{G}\left( \func{Tr}%
T\right) ^{\prime }+\left( \func{Tr}T\right) ^{\prime \prime }\right)
+\left( k-1\right) \alpha ^{3}-6k\beta \alpha ^{2}-3\beta ^{2}\alpha
-kC\alpha ^{2} \\
\dot{\beta} &=&\dot{\theta}G^{-1}\left( \frac{\beta ^{\prime }}{\alpha
+\beta }\right) -\frac{\dot{h}}{h}\beta \\
&=&\left( k-1\right) G^{-2}\left( \func{Tr}T\right) ^{\prime }\left( \frac{%
\beta ^{\prime }}{\alpha +\beta }\right) +\beta ^{2}\left( \alpha -3\beta
\right) +k\left( C-\func{Tr}T\right) \beta ^{2} \\
&=&\left( k-1\right) G^{-2}\left( \func{Tr}T\right) ^{\prime }\left( \frac{%
\beta ^{\prime }}{\alpha +\beta }\right) +\left( 1-k\right) \beta ^{2}\alpha
+3\left( 2k-1\right) \beta ^{3}+kC\beta ^{2}
\end{eqnarray*}
\end{proof}

\begin{remark}
Let us compare (\ref{lapflowdots}) for $k=0$ and $k=2.$ For $k=0,$ we have 
\begin{subequations}%
\label{Gthdots0} 
\begin{eqnarray}
\frac{\dot{G}}{G} &=&\alpha ^{2}+3\beta ^{2} \\
\dot{\theta} &=&-G^{-1}\left( \alpha -6\beta \right) ^{\prime } \\
\frac{\dot{h}}{h} &=&3\beta ^{2}-\alpha \beta
\end{eqnarray}%
\end{subequations}%
For $k=2,$ we have we have 
\begin{subequations}%
\label{Gthdots2} 
\begin{eqnarray}
\frac{\dot{G}}{G} &=&-\alpha ^{2}+3\beta ^{2}+12\alpha \beta +2C\alpha \\
\dot{\theta} &=&G^{-1}\left( \alpha -6\beta \right) ^{\prime } \\
\frac{\dot{h}}{h} &=&-9\beta ^{2}+\alpha \beta -2C\beta
\end{eqnarray}%
\end{subequations}%
Note that $\alpha =G^{-1}\theta ^{\prime },$ so the leading order terms for $%
k=2$ in (\ref{Gthdots2}) actually enter with the opposite sign compared to
the $k=0$ case in (\ref{Gthdots0}). However, $k=0$ corresponds to the
Laplacian coflow $\frac{\partial \psi }{\partial t}=\Delta \psi $, so the
\textquotedblleft reverse\textquotedblright\ Laplacian coflow $\frac{%
\partial \psi }{\partial t}=-\Delta \psi $ (which is what was actually
considered by Karigiannis, McKay and Tsui in \cite{KarigiannisMcKayTsui})
would have the same signs on the leading order terms as our modified coflow (%
\ref{modcoflowk}) with $k=2.$ Why this happens is clarified if we consider
the decomposition of $\Delta \varphi $ according to $G_{2}$-representations.
\end{remark}

\begin{lemma}
\label{LemLapG2decom}If $\varphi $ is co-closed warped $G_{2}$-structure
given by (\ref{G2warpedstruct2}), then we can write $\Delta \varphi
=X\lrcorner \psi +\mathrm{i}_{\varphi }\left( s\right) $ with 
\begin{subequations}%
\begin{eqnarray}
X &=&G^{-2}\left( 6\beta ^{\prime }-\alpha ^{\prime }\right) \frac{\partial 
}{\partial r} \\
s &=&-2\alpha \left( \alpha +3\beta \right) G^{2}dr^{2}+\left( \alpha
^{2}-3\beta \alpha +12\beta ^{2}\right) h^{2}g_{6}.
\end{eqnarray}%
\end{subequations}%
\end{lemma}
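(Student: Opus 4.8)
The statement is, in effect, a repackaging of Corollary \ref{CorrLapgam0}: there the three $SU(3)$-components $(\func{Re}_1,\func{Im}_1,\func{Re}_2)$ of $\Delta\varphi$ were computed in the co-closed case, and here the same $3$-form is re-expressed in the $G_2$-decomposition $X\lrcorner\psi+\mathrm{i}_{\varphi}(s)$. The plan is therefore to apply Proposition \ref{PropChiG2cpts} with $\chi=\Delta\varphi$. First I would check that this is legitimate: since the ambient $SU(3)$-structure is held fixed and $\varphi$ has the symmetric form (\ref{g2phi}), its Hodge Laplacian $\Delta\varphi$ is again a linear combination of $\Omega$, $\bar\Omega$ and $dr\wedge\omega$, hence an $SU(3)$-equivariant $3$-form of the type (\ref{gen3form}), so that Proposition \ref{PropChiG2cpts} applies verbatim.

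Next I would substitute the component data. By Corollary \ref{CorrLapgam0}, with $\gamma=0$, we have $\func{Re}_1(\Delta\varphi)=\alpha^2-3\beta\alpha+12\beta^2$, $\func{Im}_1(\Delta\varphi)=G^{-1}(6\beta'-\alpha')$ and $\func{Re}_2(\Delta\varphi)=-4\beta(\alpha-3\beta)$. Feeding $\func{Im}_1(\Delta\varphi)$ into the formula $X=(\func{Im}_1\chi)\,G^{-1}\frac{\partial}{\partial r}$ from (\ref{chicpts}) gives at once $X=G^{-2}(6\beta'-\alpha')\frac{\partial}{\partial r}$, the claimed vector field. For the symmetric $2$-tensor, (\ref{chicpts}) gives $s=\big(3\func{Re}_2\chi-2\func{Re}_1\chi\big)G^2\,dr^2+\big(\func{Re}_1\chi\big)h^2 g_6$; the $g_6$-coefficient is then immediately $\alpha^2-3\beta\alpha+12\beta^2$, and the remaining work is to simplify $3\func{Re}_2(\Delta\varphi)-2\func{Re}_1(\Delta\varphi)$ into the stated coefficient $-2\alpha(\alpha+3\beta)$ of $G^2\,dr^2$.

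The only part that demands any care — and the step I would expect to be the main obstacle — is precisely that last algebraic simplification: one expands $3\big(-4\beta(\alpha-3\beta)\big)-2\big(\alpha^2-3\beta\alpha+12\beta^2\big)$ and collects the $\alpha^2$-, $\alpha\beta$- and $\beta^2$-terms into $-2\alpha(\alpha+3\beta)$; everything else is purely mechanical substitution into Proposition \ref{PropChiG2cpts}. Once both coefficients of $s$ are in hand, the decomposition $\Delta\varphi=X\lrcorner\psi+\mathrm{i}_{\varphi}(s)$ with the asserted $X$ and $s$ follows directly, finishing the proof.
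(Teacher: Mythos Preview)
Your approach is exactly the paper's: apply Proposition \ref{PropChiG2cpts} to $\chi=\Delta\varphi$, feeding in the $SU(3)$-components from Corollary \ref{CorrLapgam0}, and read off $X$ and $s$ from (\ref{chicpts}). The derivation of $X$ and of the $h^2g_6$-coefficient of $s$ is identical to what the paper does.

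One caution on the algebraic step you single out. Expanding
\[
3\bigl(-4\beta(\alpha-3\beta)\bigr)-2\bigl(\alpha^{2}-3\beta\alpha+12\beta^{2}\bigr)
= -12\alpha\beta+36\beta^{2}-2\alpha^{2}+6\alpha\beta-24\beta^{2}
= -2\alpha^{2}-6\alpha\beta+12\beta^{2},
\]
which is \emph{not} $-2\alpha(\alpha+3\beta)=-2\alpha^{2}-6\alpha\beta$. The paper's own proof performs the same substitution and records the same $dr^{2}$-coefficient as in the lemma, so the discrepancy of $+12\beta^{2}$ is present there as well; it appears to be a slip in the stated formula rather than in the method. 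This does not affect the use made of the lemma (the point is that $s$ contains no derivatives of $\alpha,\beta$, only $X$ does), but you should not expect the $\beta^{2}$-terms to cancel when you carry out the arithmetic.
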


\begin{proof}
Proposition \ref{PropChiG2cpts}, we know that if we write 
\[
\Delta \varphi =X\lrcorner \psi +\mathrm{i}_{\varphi }\left( s\right) 
\]%
then,%
\begin{eqnarray}
X &=&\left( \func{Im}_{1}\Delta \varphi \right) G^{-1}\frac{\partial }{%
\partial r} \\
s &=&\left( 3\func{Re}_{2}\Delta \varphi -2\func{Re}_{1}\Delta \varphi
\right) G^{2}dr^{2}+\left( \func{Re}_{1}\Delta \varphi \right) h^{2}g_{6}
\end{eqnarray}%
From (\ref{lapcptsgam0}) we have 
\begin{eqnarray*}
\func{Re}_{1}\left( \Delta \varphi \right) &=&\alpha ^{2}-3\beta \alpha
+12\beta ^{2} \\
\func{Im}_{1}\left( \Delta \varphi \right) &=&G^{-1}\left( 6\beta ^{\prime
}-\alpha ^{\prime }\right) \\
\func{Re}_{2}\left( \Delta \varphi \right) &=&-4\beta \left( \alpha -3\beta
\right)
\end{eqnarray*}%
Hence, 
\begin{eqnarray*}
X &=&G^{-2}\left( 6\beta ^{\prime }-\alpha ^{\prime }\right) \frac{\partial 
}{\partial r} \\
s &=&-2\alpha \left( \alpha +3\beta \right) G^{2}dr^{2}+\left( \alpha
^{2}-3\beta \alpha +12\beta ^{2}\right) h^{2}g_{6}.
\end{eqnarray*}
\end{proof}

Lemma \ref{LemLapG2decom} shows that the only second-order derivative terms
(given by $\alpha ^{\prime }$) of the basic variables $G,h,\theta $ occur in 
$\pi _{7}\Delta \varphi $. The $\mathbf{1\oplus 27}$ part of $\Delta \varphi
,$ and hence of $\Delta \psi =\ast \Delta \varphi $, only involves first
derivatives. In general, as it was shown in \cite{GrigorianCoflow}, for
co-closed $G_{2}$-structures, $\pi _{1\oplus 27}\Delta \psi $ has a positive
definite symbol, while $\pi _{7}\Delta \psi $ is negative definite. In the
warped product case, since only $\pi _{7}\Delta \psi $ has leading order
terms, $-\Delta \psi $ has the correct sign, and this was the reason why the
flow $\frac{\partial \psi }{\partial t}=-\Delta \psi $ was used in \cite%
{KarigiannisMcKayTsui}. In a general setting however, both $\Delta \psi $
and $-\Delta \psi $ would have indefinite symbols.

The evolution equations (\ref{Gthdots2}) are in general difficult to
analyze. However we can make some progress in special cases. In particular
suppose $\lambda =0$, so that the underlying $6$-dimensional space is
Calabi-Yau. Then, $\beta =0$. Also suppose that $C=0$. Then, (\ref{Gthdots2}%
) simplifies to the following system:~%
\begin{subequations}%
\label{gthdotsCY}%
\begin{eqnarray}
\dot{G} &=&-G\alpha ^{2} \\
\dot{\theta} &=&G^{-1}\alpha ^{\prime } \\
\theta ^{\prime } &=&G\alpha
\end{eqnarray}%
\end{subequations}%
We can attempt to find separable solutions here. So let, 
\begin{eqnarray*}
G\left( t,r\right) &=&G_{t}\left( t\right) G_{r}\left( r\right) \\
\theta \left( t,r\right) &=&\theta _{t}\left( t\right) \theta _{r}\left(
r\right) \\
\alpha \left( t,r\right) &=&\frac{\theta ^{\prime }}{G}=\frac{\theta _{t}}{%
G_{t}}\frac{\theta _{r}^{\prime }}{G_{r}}
\end{eqnarray*}%
We'll let $\alpha _{r}\left( r\right) =\frac{\theta _{r}^{\prime }}{G_{r}}$
and $\alpha _{t}\left( t\right) =\frac{\theta _{t}}{G_{t}}.$ The equations (%
\ref{gthdotsCY}) then become%
\begin{eqnarray*}
\frac{\dot{G}_{t}}{\alpha _{t}^{2}G_{t}} &=&-\alpha _{r}^{2}=-\lambda _{1} \\
\frac{G_{t}\dot{\theta}_{t}}{\alpha _{t}} &=&G_{r}^{-1}\theta
_{r}^{-1}\alpha _{r}^{\prime }=\lambda _{2}
\end{eqnarray*}%
where $\lambda _{1}\geq 0$ and $\lambda _{2}$ are constants. Hence we find
that 
\begin{eqnarray*}
\alpha _{r}^{2} &=&\frac{\theta _{r}^{\prime }}{G_{r}}=\lambda _{1} \\
\frac{G_{t}\dot{G}_{t}}{\theta _{t}^{2}} &=&-\lambda _{1} \\
\alpha _{r}^{\prime } &=&\lambda _{2}G_{r}\theta _{r} \\
G_{t}\dot{\theta}_{t} &=&\lambda _{2}\alpha _{t}
\end{eqnarray*}%
However, note that $\alpha _{r}^{2}=\lambda _{1}$ is constant, so $\alpha
_{r}^{\prime }=0.$ Hence either $\lambda _{2}=0$ or $\theta _{r}=0$ (trivial
solution). Thus, let $\lambda _{2}=0$. In this case, we find that $\theta
_{t}$ is constant and 
\[
G_{t}^{2}=1-2\lambda _{1}\theta _{t}^{2}t 
\]%
where without loss of generality we set $G_{t}\left( 0\right) =1$. When $%
\lambda _{1}=0$ or $\theta _{t}=0$ we get a trivial solution, otherwise $%
\lambda _{1}$ has to be positive. In this case, we see that the solution
only exists for $t\in \lbrack 0,T)$ where 
\[
T=\frac{1}{2\lambda _{1}\theta _{t}}. 
\]%
Note that whenever the solution exists, from (\ref{torsabc}) we know that
the torsion of the $G_{2}$-structure is proportional to $\alpha $, which is
given by 
\[
\alpha \left( t,r\right) =\alpha _{t}\alpha _{r}=\frac{\lambda _{1}^{\frac{1%
}{2}}\theta _{t}}{\sqrt{1-2\lambda _{1}\theta _{t}^{2}t}} 
\]%
Hence, the torsion increases monotonically until it blows up at $t=T$.

\section{Soliton solutions}

\setcounter{equation}{0}\label{secsoliton}Soliton solutions of geometric
flows are solutions which evolve by diffeomorphisms and scalings. Therefore,
a smooth family $\psi \left( t\right) $ of $G_{2}$-structure $4$-forms would
be a soliton if 
\[
\frac{\partial \psi \left( t\right) }{\partial t}=\mathcal{L}_{X}\psi +4\mu
\psi 
\]%
for some vector field $X$ and a constant $\mu $ (the factor of $4$ is for
later convenience). If moreover we impose the condition that $\psi \left(
t\right) $ is a family of co-closed $G_{2}$-structures, with $d\psi \left(
t\right) =0$ for all $t$, then we would have 
\begin{equation}
\frac{\partial \psi \left( t\right) }{\partial t}=d\left( X\lrcorner \psi
\right) +4\mu \psi  \label{solitonevol}
\end{equation}%
In the case of a warped product $G_{2}$-structure, any vector $X$ that
respects the symmetry of the space would have to be proportional to $\frac{%
\partial }{\partial r}$ and only have dependence on the coordinate $r$. In
particular, we could write 
\begin{equation}
X=l\left( r\right) G^{-1}\frac{\partial }{\partial r}  \label{Xl}
\end{equation}%
for some function $l\left( r\right) $ on $L$.

\begin{lemma}
\label{SolFlow}Under the flow (\ref{solitonevol}), the warped product
parameters $G,h,\theta $ satisfy the following evolution equation 
\begin{subequations}%
\label{Ghtsoliton}%
\begin{eqnarray}
G^{-1}\dot{G} &=&G^{-1}l^{\prime }+\mu \\
\dot{\theta} &=&\alpha l \\
h^{-1}\dot{h} &=&-\frac{G^{-1}\beta ^{\prime }l}{a+\beta }+\mu
\end{eqnarray}%
\end{subequations}%
Moreover the quantities $\alpha ,\beta ,\gamma $ satisfy the following
equations 
\begin{subequations}%
\label{abcsoliton} 
\begin{eqnarray}
\dot{\alpha} &=&G^{-1}\alpha ^{\prime }l-\alpha \mu \\
\dot{\beta} &=&G^{-1}l\beta ^{\prime }-\beta \mu \\
\dot{\gamma} &=&G^{-1}l\left( \left( \frac{\beta ^{\prime }}{\alpha +\beta }%
\right) ^{2}-\left( \frac{\beta ^{\prime }}{\alpha +\beta }\right) ^{\prime
}-G^{2}\alpha \beta +\frac{G^{\prime }}{G}\left( \frac{\beta ^{\prime }}{%
\alpha +\beta }\right) \right)
\end{eqnarray}%
\end{subequations}%
\end{lemma}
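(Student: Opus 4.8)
The proof is a component-by-component expansion of the soliton equation (\ref{solitonevol}) in the $SU(3)$-adapted basis, in the spirit of the proofs of Propositions~\ref{Propdchi}, \ref{Propdschi} and Lemma~\ref{LemPsidot}. Throughout we work in the co-closed case $d\psi=0$, i.e.\ $\gamma=0$; this is preserved by the flow, since $d(d(X\lrcorner\psi)+4\mu\psi)=4\mu\,d\psi$. The plan has three stages.

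\emph{Stage 1: the source $3$-form.} From the proof of Proposition~\ref{PropChiG2cpts} we have $G^{-1}(\partial/\partial r)\lrcorner\psi=\frac{i}{2}(F^{3}\Omega-\bar F^{3}\bar\Omega)$, so with $X=lG^{-1}\partial/\partial r$ we get $X\lrcorner\psi=\frac{il}{2}(F^{3}\Omega-\bar F^{3}\bar\Omega)$. Applying $d$ via the $SU(3)$ relations (\ref{su3struct}) (in particular $d\Omega=2\lambda i\omega^{2}$) and then taking the Hodge dual via (\ref{hodgeduals}) rewrites $\ast d(X\lrcorner\psi)$ as an $SU(3)$-equivariant $3$-form of the form (\ref{gen3form}); using $(F^{3})'=(3h^{-1}h'+i\theta')F^{3}$ I expect the components to be $\func{Re}_{1}=G^{-1}l'+3G^{-1}h^{-1}h'l$, $\func{Im}_{1}=\alpha l$, $\func{Re}_{2}=-4\lambda h^{-1}l\cos\theta$. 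Since $\ast(4\mu\psi)=4\mu\varphi$ and $\varphi$ has $SU(3)$-components $(1,0,1)$ in the notation of (\ref{chinotation}), the right-hand side of $\ast_{t}\dot\psi=\ast_{t}d(X\lrcorner\psi)+4\mu\varphi$ is then explicit.

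\emph{Stage 2: solving for $\dot G,\dot h,\dot\theta$.} Matching with Lemma~\ref{LemPsidot} ($\func{Re}_{1}(\ast_{t}\dot\psi)=G^{-1}\dot G+3h^{-1}\dot h$, $\func{Im}_{1}=\dot\theta$, $\func{Re}_{2}=4h^{-1}\dot h$), the $\func{Im}_{1}$ component gives $\dot\theta=\alpha l$ at once. For the other two I would use the two $\gamma=0$ identities: $\lambda h^{-1}G\cos\theta=-h'/h$ from the definition (\ref{gamma}), and $h'/h=-\beta'/(\alpha+\beta)$ from (\ref{gamhph}) with $\gamma=0$; together these give $\lambda h^{-1}\cos\theta=G^{-1}\beta'/(\alpha+\beta)$. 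Substituting into the $\func{Re}_{2}$ equation gives $h^{-1}\dot h=-G^{-1}\beta'l/(\alpha+\beta)+\mu$, and feeding this together with $h'/h=-\beta'/(\alpha+\beta)$ into the $\func{Re}_{1}$ equation the two copies of $3G^{-1}\beta'l/(\alpha+\beta)$ cancel, leaving $G^{-1}\dot G=G^{-1}l'+\mu$. That cancellation is the one thing to verify carefully; the rest is linear bookkeeping, and this establishes (\ref{Ghtsoliton}).

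\emph{Stage 3: the equations for $\alpha,\beta,\gamma$.} Substituting $\dot G,\dot h,\dot\theta$ into the general identities (\ref{abcdot}) with $\gamma=0$: in $\dot\alpha=(\dot\theta)'/G-\alpha\dot G/G$ the two $G^{-1}\alpha l'$ terms cancel, giving $\dot\alpha=G^{-1}\alpha'l-\alpha\mu$; in $\dot\beta=\dot\theta G^{-1}\beta'/(\alpha+\beta)-\beta\dot h/h$ the $\alpha$- and $\beta$-pieces combine to $G^{-1}\beta'l$, giving $\dot\beta=G^{-1}l\beta'-\beta\mu$. For $\dot\gamma$, writing $P=\beta'/(\alpha+\beta)$ so that $\dot h/h=-G^{-1}Pl+\mu$ and $\dot G/G-\dot h/h=G^{-1}(l'+Pl)$, expanding $(\dot h/h)'=-(G^{-1}Pl)'$ makes the $G^{-1}Pl'$ terms cancel and leaves $\dot\gamma=G^{-2}G'Pl-G^{-1}P'l+G^{-1}P^{2}l-G\alpha\beta l$, which is exactly the bracket in (\ref{abcsoliton}) after pulling out $G^{-1}l$. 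There is no deep obstacle; the care required is the Hodge-star sign bookkeeping in Stage~1 and keeping the single power of $G$ straight in the last $-G^{2}\alpha\beta$ term of $\dot\gamma$, which enters through the $-\dot\theta G\beta$ piece of (\ref{abcdot}).
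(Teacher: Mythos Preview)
Your proposal is correct and follows essentially the same route as the paper: compute $X\lrcorner\psi$, apply $d$ and $\ast$ (the paper does this by invoking Proposition~\ref{Propdchi} with $\func{Re}A=0$, $\func{Im}A=l$, $B=0$, while you redo the computation directly via (\ref{su3struct}) and (\ref{hodgeduals}), but the content is identical), match with Lemma~\ref{LemPsidot}, and then substitute into (\ref{abcdot}) with $\gamma=0$. The cancellations you flag in Stages~2 and~3 are exactly the ones the paper uses, and your handling of the $-G^{2}\alpha\beta$ term in $\dot\gamma$ is correct.
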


\begin{proof}
We will consider the components $\func{Re}_{1},$ $\func{Im}_{1}$ and $\func{%
Re}_{2}$ of the right-hand side of (\ref{solitonevol}). First consider $%
d\left( l\left( r\right) G^{-1}\frac{\partial }{\partial r}\lrcorner \psi
\right) $. We have%
\begin{eqnarray*}
lG^{-1}\frac{\partial }{\partial r}\lrcorner \psi &=&lG^{-1}\frac{\partial }{%
\partial r}\lrcorner \left( \frac{1}{2}h^{4}\omega ^{2}-\frac{iGF^{3}}{2}%
\Omega \wedge dr+\frac{iG\bar{F}^{3}}{2}\bar{\Omega}\wedge dr\right) \\
&=&\frac{1}{2}\left( il\right) F^{3}\Omega -\frac{1}{2}\left( il\right) \bar{%
F}^{3}\bar{\Omega}
\end{eqnarray*}%
So, 
\begin{eqnarray*}
\func{Re}_{1}\left( lG^{-1}\frac{\partial }{\partial r}\lrcorner \psi
\right) &=&0 \\
\func{Im}_{1}\left( lG^{-1}\frac{\partial }{\partial r}\lrcorner \psi
\right) &=&l \\
\func{Re}_{2}\left( lG^{-1}\frac{\partial }{\partial r}\lrcorner \psi
\right) &=&0
\end{eqnarray*}%
Thus, from Proposition \ref{Propdchi}, we obtain 
\begin{eqnarray*}
\func{Re}_{1}\left( \ast d\left( lG^{-1}\frac{\partial }{\partial r}%
\lrcorner \psi \right) \right) &=&G^{-1}\left( \func{Im}A^{\prime
}+3h^{-1}h^{\prime }\func{Im}A+\theta ^{\prime }\func{Re}A-3\lambda
BGh^{-1}\sin \theta \right) \\
&=&G^{-1}\left( l^{\prime }+3\frac{h^{\prime }}{h}l\right) \\
&=&G^{-1}\left( l^{\prime }-\frac{3\beta ^{\prime }l}{a+\beta }\right) \\
\func{Im}_{1}\left( \ast d\left( lG^{-1}\frac{\partial }{\partial r}%
\lrcorner \psi \right) \right) &=&G^{-1}\left( -\func{Re}A^{\prime
}-3h^{-1}h^{\prime }\func{Re}A+\theta ^{\prime }\func{Im}A-3\lambda
BGh^{-1}\cos \theta \right) \\
&=&G^{-1}\theta ^{\prime }l \\
&=&\alpha l \\
\func{Re}_{2}\left( \ast d\left( lG^{-1}\frac{\partial }{\partial r}%
\lrcorner \psi \right) \right) &=&-4\lambda h^{-1}\left( \sin \theta \func{Re%
}A+\cos \theta \func{Im}A\right) \\
&=&-4\lambda h^{-1}l\cos \theta \\
&=&-\frac{4G^{-1}\beta ^{\prime }l}{a+\beta }
\end{eqnarray*}%
Now if, 
\[
\frac{\partial \psi }{\partial t}=d\left( lG^{-1}\frac{\partial }{\partial r}%
\lrcorner \psi \right) +4\mu \psi 
\]%
then, 
\begin{eqnarray*}
G^{-1}\dot{G}+3h^{-1}\dot{h} &=&G^{-1}\left( l^{\prime }-\frac{3\beta
^{\prime }l}{\alpha +\beta }\right) +4\mu \\
\dot{\theta} &=&\alpha l \\
4h^{-1}\dot{h} &=&-\frac{4G^{-1}\beta ^{\prime }l}{\alpha +\beta }+4\mu
\end{eqnarray*}%
From this we obtain (\ref{Ghtsoliton}). To find the evolution of $\alpha
,\beta ,\gamma $ we just substitute (\ref{Ghtsoliton}) into the expressions (%
\ref{abcdot}) and set $\gamma =0$: 
\begin{eqnarray*}
\dot{\alpha} &=&\frac{\left( \dot{\theta}\right) ^{\prime }}{G}-\alpha \frac{%
\dot{G}}{G} \\
&=&G^{-1}\left( \alpha l\right) ^{\prime }-G^{-1}\alpha l^{\prime }-\alpha
\mu \\
&=&G^{-1}\alpha ^{\prime }l-\alpha \mu \\
\dot{\beta} &=&\dot{\theta}G^{-1}\left( \gamma -\frac{h^{\prime }}{h}\right)
-\frac{\dot{h}}{h}\beta \\
&=&\alpha lG^{-1}\left( \frac{\beta ^{\prime }}{\alpha +\beta }\right) +%
\frac{G^{-1}\beta ^{\prime }\beta l}{a+\beta }-\mu \beta \\
&=&G^{-1}l\beta ^{\prime }-\beta \mu \\
\dot{\gamma} &=&\left( \frac{\dot{h}}{h}\right) ^{\prime }+\left( \frac{\dot{%
G}}{G}-\frac{\dot{h}}{h}\right) \left( \gamma -\frac{h^{\prime }}{h}\right) -%
\dot{\theta}G\beta \\
&=&-\left( \frac{G^{-1}\beta ^{\prime }l}{a+\beta }\right) ^{\prime
}+G^{-1}\left( l^{\prime }+\frac{\beta ^{\prime }l}{a+\beta }\right) \left( 
\frac{\beta ^{\prime }}{\alpha +\beta }\right) -G\alpha \beta l \\
&=&G^{-1}l\left( \left( \frac{\beta ^{\prime }}{\alpha +\beta }\right)
^{2}-\left( \frac{\beta ^{\prime }}{\alpha +\beta }\right) ^{\prime
}-G^{2}\alpha \beta +\frac{G^{\prime }}{G}\left( \frac{\beta ^{\prime }}{%
\alpha +\beta }\right) \right)
\end{eqnarray*}
\end{proof}

Now suppose we want soliton solutions of the modified Laplacian coflow (\ref%
{modcoflowk}). Then at every time $t$ we require%
\begin{equation}
\Delta _{\psi }\psi +kd\left( \left( C-\func{Tr}T\right) \varphi \right)
=d\left( lG^{-1}\frac{\partial }{\partial r}\lrcorner \psi \right) +4\mu \psi
\label{coflowsoliton}
\end{equation}%
This is now a time-independent equation, so we can redefine the coordinate $%
r $ on $L$ such that $G=1.$ Then by equating (\ref{Ghtsoliton}) with (\ref%
{lapflowdots}) we get the following equations for $h,\theta ,l$ and $\mu $:

\begin{proposition}
The soliton solutions of the modified Laplacian coflow (\ref{modcoflowk})
satisfy the following equations 
\begin{subequations}%
\label{soleq}%
\begin{eqnarray}
\left( 1-k\right) \alpha ^{2}+3\beta ^{2}+6k\alpha \beta +kC\alpha
&=&l^{\prime }+\mu \\
\left( k-1\right) \left( \alpha -6\beta \right) ^{\prime } &=&\alpha l \\
3\left( 1-2k\right) \beta ^{2}+\left( k-1\right) \alpha \beta -kC\beta &=&-%
\frac{\beta ^{\prime }l}{a+\beta }+\mu  \label{soleq3}
\end{eqnarray}%
\end{subequations}%
where $\alpha =\theta ^{\prime }$ and $\beta =\frac{\lambda \sin \theta }{h}$%
.
\end{proposition}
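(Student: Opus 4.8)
The plan is to read equation (\ref{coflowsoliton}) as an identity between two $4$-forms, each of which may be regarded as a prescribed value of $\partial\psi/\partial t$, and then to equate the two systems of first-order equations for the warped-product parameters $G,h,\theta$ that these two readings produce. First I would observe that, unlike the flow itself, the soliton equation (\ref{coflowsoliton}) carries no explicit time dependence: it is a relation among $\psi$, $\varphi$, $X$ and $\mu$ at a single instant. Hence the coordinate freedom noted just before (\ref{gen3form}) may be exercised once and for all to normalise $G\equiv 1$ — this is legitimate here precisely because the normalisation need not be preserved along a flow. With $G=1$ we have $\alpha=\theta'$ and $\beta=\lambda h^{-1}\sin\theta$ from (\ref{abc}), as in the statement.

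Next I would invoke the two decompositions already established in the excerpt. By Lemma \ref{SolFlow}, the right-hand side $d\!\left(lG^{-1}\frac{\partial}{\partial r}\lrcorner\psi\right)+4\mu\psi$, regarded as $\partial\psi/\partial t$, forces $G,h,\theta$ to satisfy (\ref{Ghtsoliton}). By the theorem establishing (\ref{lapflowdots}), the left-hand side $\Delta_{\psi}\psi+kd\!\left(\left(C-\func{Tr}T\right)\varphi\right)$, regarded as $\partial\psi/\partial t$, forces $G,h,\theta$ to satisfy (\ref{lapflowdots}). In both derivations the passage from an admissible symmetric $4$-form to the triple $\left(G^{-1}\dot G,\dot\theta,h^{-1}\dot h\right)$ is effected by the \emph{same} procedure, namely taking $\ast$, extracting the $(\func{Re}_{1},\func{Im}_{1},\func{Re}_{2})$-components, and applying Lemma \ref{LemPsidot}. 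Consequently the equality of the two $4$-forms asserted by (\ref{coflowsoliton}) is equivalent to the equality, term by term, of the systems (\ref{Ghtsoliton}) and (\ref{lapflowdots}).

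Finally I would simply set $G=1$ in both systems and read off the three resulting identities: equating the $G^{-1}\dot G$ rows gives $(1-k)\alpha^{2}+3\beta^{2}+6k\alpha\beta+kC\alpha=l'+\mu$; equating the $\dot\theta$ rows gives $(k-1)(\alpha-6\beta)'=\alpha l$; and equating the $h^{-1}\dot h$ rows gives $3(1-2k)\beta^{2}+(k-1)\alpha\beta-kC\beta=-\beta'l/(\alpha+\beta)+\mu$. These are exactly equations (\ref{soleq}), with $\alpha=\theta'$ and $\beta=\lambda h^{-1}\sin\theta$.

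I do not expect any genuine analytic obstacle in this proof; it is essentially a bookkeeping identity between two previously computed systems. The one point that deserves an explicit word of justification is the gauge choice $G\equiv 1$, which is available here — but would not be in the time-dependent setting of Section \ref{secflow} — precisely because (\ref{coflowsoliton}) is time-independent. It is also worth recording that the derivations of both (\ref{lapflowdots}) and (\ref{Ghtsoliton}) already impose $\gamma=0$, which is consistent with, and indeed forced by, the standing co-closed ansatz $d\psi=0$ underlying the whole discussion.
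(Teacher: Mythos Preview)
Your proposal is correct and follows essentially the same approach as the paper: set $G=1$ using the time-independence of (\ref{coflowsoliton}), then equate the systems (\ref{Ghtsoliton}) and (\ref{lapflowdots}) row by row. Your write-up is in fact more carefully justified than the paper's, which simply states ``by equating (\ref{Ghtsoliton}) with (\ref{lapflowdots}) we get the following equations'' without further elaboration.
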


Note that equivalently, we could have equated the equations for $\dot{\alpha}%
,\dot{\beta}$ and $\dot{\gamma}.$ The result would be an equivalent set of
equations, however $\dot{\gamma}=0$ equation gives us 
\begin{equation}
\left( \frac{\beta ^{\prime }}{\alpha +\beta }\right) ^{2}-\left( \frac{%
\beta ^{\prime }}{\alpha +\beta }\right) ^{\prime }-\alpha \beta =0
\label{gamdot0}
\end{equation}%
Even though this can still be obtained from (\ref{soleq}), this explicitly
gives us the co-closed condition (in the nearly K\"{a}hler case, when $%
\lambda \neq 0$).

\subsection{Calabi-Yau case}

If $M^{6}$ is a Calabi-Yau space, then $\lambda =0$, so $\beta =0$. From (%
\ref{soleq3}) this immediately gives $\mu =0$. Also, note that since we are
imposing the condition for the $G_{2}$-structure to be co-closed, we have $%
\gamma =0$. Since $\lambda =0$, this then gives $h^{\prime }=0.$ Hence,
without loss of generality we can assume that $h=1$. Since both $G$ and $h$
are equal to $1$, the metric on $L\times M^{6}$ is just the product metric,
and the $G_{2}$-structure $3$-form $\varphi $ is given by 
\[
\varphi =\frac{1}{2}e^{i\theta }\Omega +\frac{1}{2}e^{-i\theta }\bar{\Omega}%
+dr\wedge \omega 
\]%
where $\theta $ is a function of $r$. The torsion tensor is then given by 
\[
T=\func{diag}\left( \alpha ,0,...,0\right) . 
\]%
The torsion tensor of the $G_{2}$-structure The other equations also
simplify. Thus we have the following special case of (\ref{soleq}).

\begin{corollary}
Suppose $\lambda =0$, so that the underlying $6$-manifold is Calabi-Yau.
Then, the soliton solutions of the modified Laplacian coflow with $k=2$
satisfy 
\begin{subequations}%
\label{CYsoleqs}%
\begin{eqnarray}
l^{\prime } &=&-\alpha ^{2}+2C\alpha \\
\alpha ^{\prime } &=&\alpha l  \label{cyalphprime} \\
\mu &=&0
\end{eqnarray}%
\end{subequations}%
.
\end{corollary}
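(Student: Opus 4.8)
The plan is to specialize the general soliton system (\ref{soleq}) to the Calabi-Yau case, where the whole system collapses because $\beta$ vanishes identically. First I would record the simplifications already available: since $\lambda = 0$, equation (\ref{beta}) gives $\beta = \lambda h^{-1}\sin\theta \equiv 0$; imposing the co-closed condition forces $\gamma = 0$, and then (\ref{gamma}) with $\lambda = 0$ yields $h^{\prime}/h = 0$, so $h$ is constant and may be normalized to $h=1$, while the reparametrization making $G=1$ was already carried out in passing from (\ref{coflowsoliton}) to (\ref{soleq}). Thus the only remaining function is $\theta(r)$, with $\alpha = \theta^{\prime}$.

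Next I would substitute $k = 2$ and $\beta \equiv 0$ into the three equations (\ref{soleq}). In (\ref{soleq3}) every term on the left-hand side carries a factor of $\beta$ and so vanishes, and because $\beta$ is identically zero we also have $\beta^{\prime} = 0$; hence the right-hand side is just $\mu$, giving $\mu = 0$. With $k=2$ and $\beta = 0$ the first equation of (\ref{soleq}) reads $-\alpha^2 + 2C\alpha = l^{\prime} + \mu$, which together with $\mu = 0$ gives $l^{\prime} = -\alpha^2 + 2C\alpha$. The second equation of (\ref{soleq}) becomes $(\alpha)^{\prime} = \alpha l$, i.e. $\alpha^{\prime} = \alpha l$. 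These are exactly (\ref{CYsoleqs}).

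There is essentially no obstacle here; the proof is a direct substitution into results already established. The one point deserving a remark is the term $\beta^{\prime} l/(\alpha+\beta)$ in (\ref{soleq3}): it is the indeterminate $0/\alpha$, which is unambiguously zero wherever $\alpha \neq 0$, and if $\alpha \equiv 0$ as well then $T = 0$ by (\ref{torsabc}) and the $G_{2}$-structure is torsion-free, so (\ref{CYsoleqs}) holds trivially. Alternatively, one could equate the evolution of $\alpha,\beta,\gamma$ from Lemma \ref{SolFlow} with that from (\ref{lapflowabcdots}); for $\beta \equiv 0$ the equations for $\dot{\beta}$ and $\dot{\gamma}$ are trivial and the equation for $\dot{\alpha}$ reproduces $\alpha^{\prime} = \alpha l$ once $l^{\prime} = -\alpha^2 + 2C\alpha$ is used.
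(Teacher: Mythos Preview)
Your proof is correct and follows essentially the same route as the paper: the corollary is an immediate specialization of the general soliton system (\ref{soleq}) to $\lambda=0$, $\beta\equiv0$, $k=2$, with $\mu=0$ read off from (\ref{soleq3}) and the remaining two equations obtained by direct substitution. Your extra care with the $\beta' l/(\alpha+\beta)$ term is a nice touch but not strictly needed, since the paper simply observes that $\beta=0$ in (\ref{soleq3}) ``immediately gives $\mu=0$.''
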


Define the quantity $R$ via%
\begin{equation}
R^{2}=l^{2}+\left( \alpha -2C\right) ^{2}  \label{I1int}
\end{equation}%
It follows immediately from (\ref{CYsoleqs}) that $R^{\prime }=0$.
Therefore, $R$ is a first integral for the system (\ref{CYsoleqs}). Recall
that the torsion of the the warped product $G_{2}$-structure is proportional
to $\alpha $ in this case. The expression (\ref{I1int}) shows that the only
way we can have a torsion-free solution (i.e. $\alpha =0$) is if $l$ is
constant. Also, if $R=0$, we get constant solutions $l=0$ and $\alpha =$ $2C$%
. So suppose $R$ is non-zero. This enables us to solve the equations (\ref%
{CYsoleqs}). Indeed, from (\ref{cyalphprime}) we get 
\begin{equation}
r=\pm \int \frac{d\alpha }{\alpha \sqrt{R^{2}-\left( \alpha -2C\right) ^{2}}}
\label{rintalpha}
\end{equation}%
After integrating (\ref{rintalpha}), we will obtain $\alpha \left( r\right) $
and we will then use it to find $l$ and $\theta :$ 
\begin{subequations}%
\label{cylthetaalpha} 
\begin{eqnarray}
l &=&\frac{\alpha ^{\prime }}{\alpha } \\
\theta &=&\int \alpha \left( r\right) dr
\end{eqnarray}%
\end{subequations}%
Since Calabi-Yau $3$-form $\Omega $ is only defined up to a constant phase
factor, we will neglect the constant of integration when computing $\theta $%
, since we can always redefine $\theta $ by a translation. The actual
solutions that we obtain will depend on the sign of the quantity $%
R^{2}-4C^{2}.$ We summarize the findings in Theorem and after the statement
we will individually consider the three different cases: $R^{2}-4C^{2}=0,$ $%
R^{2}-4C^{2}>0$ and $R^{2}-4C^{2}<0.$

\begin{theorem}
\label{ThmCYSols}Let $M^{7}=L\times M^{6}$ where $L$ is a $1$-dimensional
space diffeomorphic to $\mathbb{R}\ $and $M^{6}$ a Calabi-Yau $3$-fold. The
given the soliton equation for the modified Laplacian coflow 
\[
\Delta _{\psi }\psi +2d\left( \left( C-\func{Tr}T\right) \varphi \right)
=d\left( lG^{-1}\frac{\partial }{\partial r}\lrcorner \psi \right) +4\mu
\psi 
\]%
together with initial conditions 
\begin{equation}
\left\{ 
\begin{array}{r}
\alpha \left( r_{0}\right) =\alpha _{0} \\ 
l\left( r_{0}\right) =0 \\ 
\theta _{0}\left( r_{0}\right) =\theta _{0}%
\end{array}%
\right.
\end{equation}%
for some $r_{0}\in \mathbb{R}$ are: 
\begin{equation}
\left\{ 
\begin{array}{r}
G=1 \\ 
h=1 \\ 
\mu =0%
\end{array}%
\right.  \label{cysolGhmu}
\end{equation}%
and

\begin{enumerate}
\item If $\alpha _{0}=4C$, 
\begin{equation}
\left\{ 
\begin{array}{c}
\alpha =\frac{4C}{4C^{2}\tilde{r}^{2}+1} \\ 
l=-\frac{8C^{2}\tilde{r}}{4C^{2}\tilde{r}^{2}+1} \\ 
\theta =2\arctan \left( 2C\tilde{r}\right) +\theta _{0}%
\end{array}%
\right.
\end{equation}%
where $\tilde{r}=r-r_{0}.$

\item If $\alpha _{0}=2C\pm R$ for $\left\vert R\right\vert >2C,$ 
\begin{equation}
\left\{ 
\begin{array}{c}
\alpha =\frac{\pm 2RQ^{2}e^{-\tilde{r}Q}}{\left( e^{-\tilde{r}Q}R\mp
2C\right) ^{2}+Q^{2}} \\ 
l=\frac{R^{2}Q\left( e^{-2\tilde{r}Q}-1\right) }{\left( e^{-\tilde{r}Q}R\mp
2C\right) ^{2}+Q^{2}} \\ 
\theta =2\arctan \left( \frac{1}{Q}\left( 2C\mp e^{-\tilde{r}Q}R\right)
\right) +\tilde{\theta}_{0}%
\end{array}%
\right.
\end{equation}%
where $\tilde{r}=r-r_{0}$ and $Q^{2}=R^{2}-4C^{2}.$ Also, $\tilde{\theta}%
_{0}=\theta _{0}-2\arctan \left( \frac{2C\mp R}{Q}\right) .$

\item If $\alpha _{0}=2C\pm R$ for $\left\vert R\right\vert <2C,$%
\begin{equation}
\left\{ 
\begin{array}{c}
\alpha =\frac{Q^{2}}{2C+R\cos \tilde{r}Q} \\ 
l=\frac{QR\sin \left( \tilde{r}Q\right) }{2C+R\cos \left( \tilde{r}Q\right) }
\\ 
\theta =2\arctan \left( \frac{2C-R}{Q}\tan \left( \frac{1}{2}\tilde{r}%
Q\right) \right) +\theta _{0}%
\end{array}%
\right.  \label{CYsolperiodic}
\end{equation}%
where $\tilde{r}=r-r_{0}$ for $\alpha _{0}=2C-R$ and $\tilde{r}=r-r_{0}+%
\frac{\pi }{Q}$ for $\alpha _{0}=2C+R,$ and $Q^{2}=4C^{2}-R^{2}.$
\end{enumerate}

If $L\cong S^{1}$, with $r\in \lbrack 0,2\pi )$, then non-trivial global
solutions exist if and only if $\alpha _{0}=2C\pm R$ for $\left\vert
R\right\vert <2C$ such that 
\[
Q^{2}=4C^{2}-R^{2}=2n 
\]%
and are then (\ref{cysolGhmu}) together with (\ref{CYsolperiodic}).
Moreover, if $C=0,$ then there exists a trivial solution $\alpha =l=0$ and $%
\theta =\theta _{0}$ for some constant $\theta _{0}$.
\end{theorem}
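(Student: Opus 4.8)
The plan is to integrate the ODE system \eqref{CYsoleqs} explicitly, case by case according to the sign of $R^{2}-4C^{2}$, and then impose periodicity in $r$ to single out the global solutions. First I would record that $G=1$, $h=1$, $\mu=0$ follow immediately from the Calabi-Yau hypothesis $\beta=0$ together with the co-closed condition $\gamma=0$ as discussed just before the statement, which establishes \eqref{cysolGhmu}. Then I would verify that $R^{2}=l^{2}+(\alpha-2C)^{2}$ is conserved: differentiating and substituting $l'=-\alpha^{2}+2C\alpha=-\alpha(\alpha-2C)$ and $\alpha'=\alpha l$ gives $RR'=ll'+(\alpha-2C)\alpha'=-l\alpha(\alpha-2C)+(\alpha-2C)\alpha l=0$, so $R$ is constant along solutions. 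The degenerate cases $R=0$ (giving $l=0$, $\alpha=2C$) and $l\equiv\text{const}$ (forcing $\alpha'=0$, hence $\alpha=0$ if that constant is nonzero, or $\alpha$ arbitrary constant if $l=0$) are dispatched directly.

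For the generic case $R\neq 0$, I would use $\alpha'=\alpha l$ and $l=\pm\sqrt{R^{2}-(\alpha-2C)^{2}}$ to separate variables, obtaining $r-r_{0}=\pm\int \frac{d\alpha}{\alpha\sqrt{R^{2}-(\alpha-2C)^{2}}}$ as in \eqref{rintalpha}, and then carry out this elementary integral. The substitution $u=\alpha-2C$ turns the integrand into $\frac{du}{(u+2C)\sqrt{R^{2}-u^{2}}}$, a standard form whose antiderivative depends on whether $R^{2}-4C^{2}$ is zero, positive, or negative. The three sub-cases are: (i) $R^{2}=4C^{2}$, where the initial condition $\alpha(r_{0})=\alpha_{0}$ with $l(r_{0})=0$ forces $\alpha_{0}=2C\pm R=4C$ (taking the $+$ sign; the $-$ sign gives $\alpha_0=0$ and the trivial branch), and the integral yields a rational function of $\tilde r=r-r_{0}$, giving $\alpha=\frac{4C}{4C^{2}\tilde r^{2}+1}$; (ii) $Q^{2}:=R^{2}-4C^{2}>0$, where the integral produces logarithms and hence exponentials $e^{-\tilde r Q}$ after inversion; (iii) $Q^{2}:=4C^{2}-R^{2}>0$, where the integral produces inverse trigonometric functions and hence the periodic form $\alpha=\frac{Q^{2}}{2C+R\cos(\tilde r Q)}$. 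In each sub-case I would then recover $l=\alpha'/\alpha$ by differentiation and $\theta=\int\alpha\,dr$ by a further elementary integration (using $\int\frac{Q^{2}\,d\tilde r}{2C+R\cos(\tilde r Q)}$, a Weierstrass-substitution integral, in case (iii)), fixing the constant of integration by matching $\theta(r_{0})=\theta_{0}$ to get the stated $\tilde\theta_{0}$.

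For the final assertion about $L\cong S^{1}$, I would observe that among the three families only case (iii) is bounded and periodic in $r$, with period $\frac{2\pi}{Q}$ in $\tilde r$ for $\alpha$ and $l$; the cases (i) and (ii) have $\alpha\to 0$ and $l$ unbounded or non-periodic as $\tilde r\to\pm\infty$, so they cannot descend to $S^{1}$ without being trivial. Requiring the period to divide $2\pi$ (so that $\alpha,l$ are genuinely functions on $S^{1}$ with $r\in[0,2\pi)$) gives $\frac{2\pi}{Q}\cdot m=2\pi$ for a positive integer $m$, i.e. $Q^{2}=m^{2}$; the paper writes this as $Q^{2}=2n$, and I would reconcile the normalization (the factor comes from $\lambda=1$ versus the relevant rescaling, or from requiring $\theta$ itself — not just $e^{i\theta}$ — to be well-defined up to the allowed $2\pi\mathbb{Z}$ ambiguity of the phase of $\Omega$; I would need to check carefully which convention forces the even integer). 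The hardest part will be this last bookkeeping step: pinning down the exact integrality condition on $Q^{2}$ and confirming that the $\theta$-integral in case (iii) is compatible with the circle topology (i.e. that $\theta$ changes by an integer multiple of $2\pi$ over one period, using that $\Omega$ is only defined up to a constant phase), rather than the integrations themselves, which are routine. I would close by noting the $C=0$ sub-case: then $R^{2}-4C^{2}=R^{2}\geq 0$, case (iii) is vacuous unless $R=0$, and $R=0$ forces the trivial solution $\alpha=l=0$, $\theta=\theta_{0}$.
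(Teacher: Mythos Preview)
Your proposal is correct and follows essentially the same route as the paper: establish $G=h=1$, $\mu=0$ from $\beta=\gamma=0$; verify the conserved quantity $R^{2}=l^{2}+(\alpha-2C)^{2}$; separate variables via $r=\pm\int\frac{d\alpha}{\alpha\sqrt{R^{2}-(\alpha-2C)^{2}}}$; split into the three cases according to the sign of $R^{2}-4C^{2}$; then recover $l=\alpha'/\alpha$ and $\theta=\int\alpha\,dr$, and finally argue that only the trigonometric (periodic) branch survives on $S^{1}$. Your caution about the exact integrality condition on $Q$ is well placed---the paper's own discussion after the theorem writes the condition as ``$Q=2n$ for some integer $n$'' rather than the $Q^{2}=2n$ appearing in the statement, so this point is indeed the one that requires careful bookkeeping.
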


\subsubsection{$R^{2}-4C^{2}=0$}

If $R^{2}-4C^{2}=0$, then the solution for $\alpha $ is 
\begin{equation}
\alpha =\frac{4C}{4C^{2}r^{2}+1}  \label{alpha0}
\end{equation}%
Note that since we can always apply a translation on $L$ to obtain a shifted
coordinate $r,$ we have neglected the constant of integration in (\ref%
{rintalpha}). From (\ref{cylthetaalpha}) we then immediately obtain the
solutions for $l$ and $\theta $:%
\begin{subequations}%
\label{Q0sols} 
\begin{eqnarray}
l &=&-\frac{8C^{2}r}{4C^{2}r^{2}+1} \\
\theta &=&2\arctan \left( 2Cr\right)
\end{eqnarray}%
\end{subequations}%
Note that when $C=0$ this gives a trivial solution $l=\theta =0.$ Figure \ref%
{figcysol0} shows a phase diagram for this solution in the $\alpha -l$ space

\FRAME{ftbphFU}{3.7178in}{3.3261in}{0pt}{\Qcb{Phase diagram for the case $%
R^{2}-4C^{2}=0.$ When $r\longrightarrow -\infty ,$ $\protect\alpha %
,l\longrightarrow 0$ and $\protect\theta \longrightarrow -\protect\pi .$
When $r\longrightarrow +\infty ,$ $\protect\alpha ,l\longrightarrow 0$ and $%
\protect\theta \longrightarrow +\protect\pi .$ }}{\Qlb{figcysol0}}{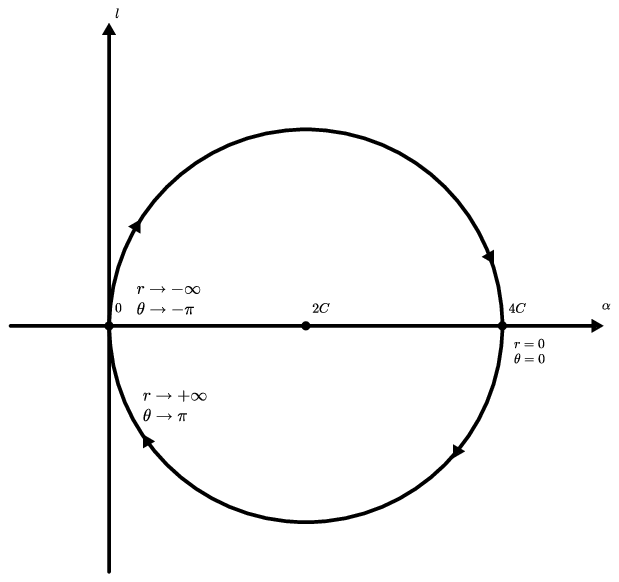%
}{\special{language "Scientific Word";type "GRAPHIC";maintain-aspect-ratio
TRUE;display "USEDEF";valid_file "F";width 3.7178in;height 3.3261in;depth
0pt;original-width 2.5538in;original-height 2.3705in;cropleft "0";croptop
"1";cropright "1";cropbottom "0";filename 'cysol0.eps';file-properties
"XNPEU";}}In particular, from the conserved quantity (\ref{I1int}), we see
that this is the solution that we obtain to the system (\ref{CYsoleqs})
together with the initial conditions 
\begin{equation}
\left\{ 
\begin{array}{r}
\alpha \left( 0\right) =4C \\ 
l\left( 0\right) =0%
\end{array}%
\right. .  \label{cysol0ics}
\end{equation}%
If $L$ is non-compact, then this solution is defined globally. On the other
hand, if $L\cong S^{1},$ then this solution is not globally defined, and
exists only locally.

\subsubsection{$R^{2}-4C^{2}>0$}

If $R^{2}-4C^{2}>0$, then define the quantity $Q$ via%
\begin{equation}
Q^{2}=R^{2}-4C^{2}.
\end{equation}%
Then from (\ref{rintalpha}), we obtain a solution 
\begin{equation}
\alpha =\frac{4A_{0}Q^{2}e^{-rQ}}{\left( A_{0}e^{-rQ}-4C\right) ^{2}+4Q^{2}}
\label{alphaplus}
\end{equation}%
where $A_{0}$ is a constant that depends on initial conditions. Note that an
equivalent solution is obtained by taking $r\longrightarrow -r$. From (\ref%
{alphaplus}) we easily obtain solutions for $l$ and $\theta $:%
\begin{subequations}%
\label{Qpossols} 
\begin{eqnarray}
l &=&\frac{Q\left( A_{0}^{2}e^{-2rQ}-4R^{2}\right) }{\left(
A_{0}e^{-rQ}-4C\right) ^{2}+4Q^{2}} \\
\theta &=&2\arctan \left( \frac{1}{2Q}\left( 4C-A_{0}e^{-rQ}\right) \right)
\end{eqnarray}%
\end{subequations}%
Figure \ref{figcysolpos} shows for this solution in the $\alpha -l$ space.
Similarly as in the case for $R^{2}-4C^{2}=0,$ these solutions are only
defined globally for non-compact $L.$ Taking $A_{0}=\pm 2R,$ and translating
the $r$ coordinate, the solutions (\ref{Qpossols}) are hence solutions of (%
\ref{CYsoleqs}) with the initial conditions%
\begin{equation}
\left\{ 
\begin{array}{r}
\alpha \left( r_{0}\right) =2C\pm R \\ 
l\left( r_{0}\right) =0%
\end{array}%
\right.  \label{cysolposics}
\end{equation}%
for some $r_{0}\in \mathbb{R}$ and $\left\vert R\right\vert >2\left\vert
C\right\vert $. \FRAME{ftbphFU}{5.5953in}{4.1485in}{0pt}{\Qcb{Phase diagram
for the case $Q^{2}=R^{2}-4C^{2}>0.$ The sign of $\protect\alpha $ in the
solution depends on the sign of the constant $A_{0}$. When $r\longrightarrow
-\infty ,$ $\protect\alpha \longrightarrow 0$ and $l\longrightarrow Q.$ When 
$r\longrightarrow +\infty $, $\protect\alpha \longrightarrow 0$ and $%
l\longrightarrow -Q$.}}{\Qlb{figcysolpos}}{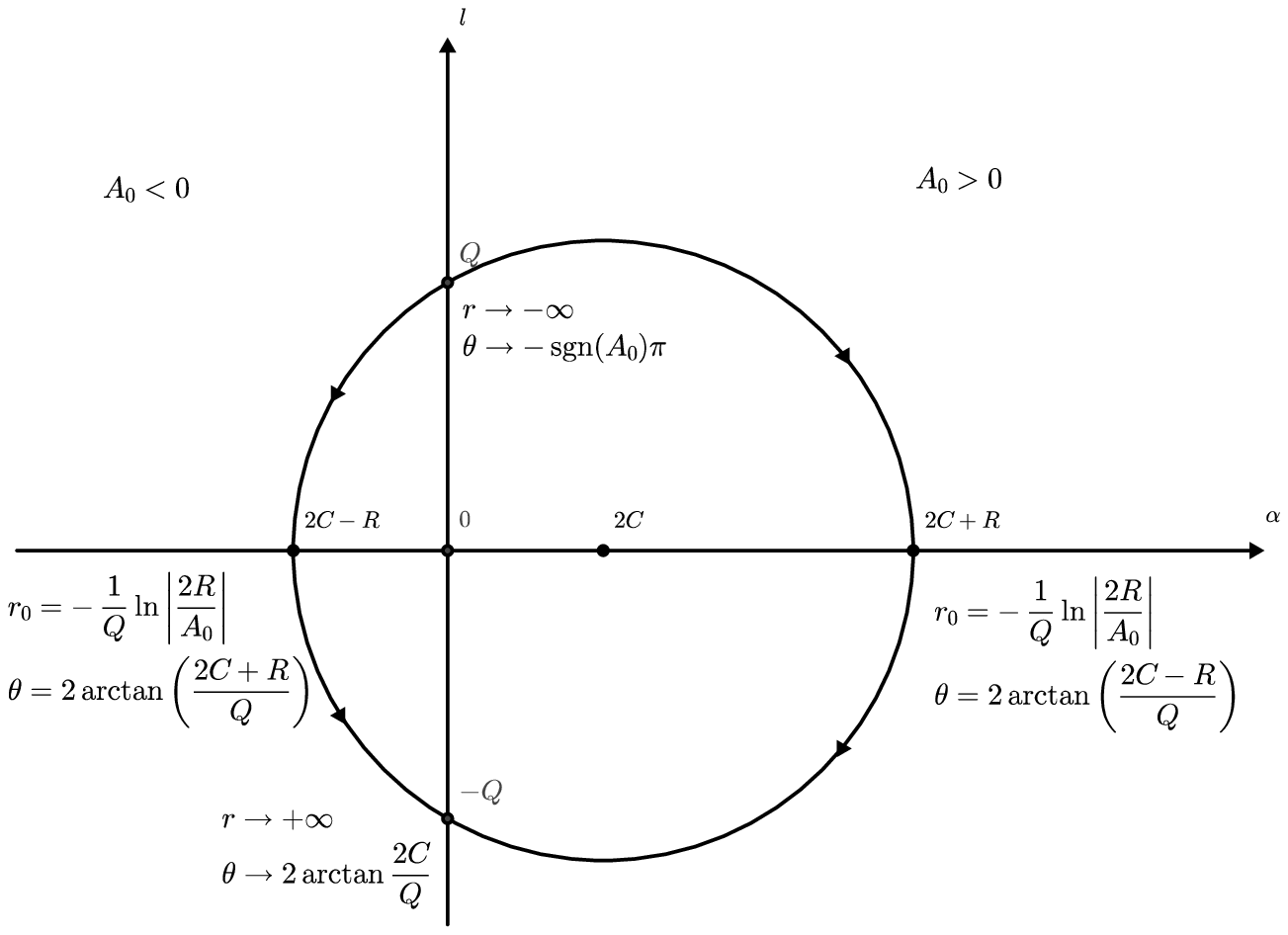}{\special{language
"Scientific Word";type "GRAPHIC";maintain-aspect-ratio TRUE;display
"USEDEF";valid_file "F";width 5.5953in;height 4.1485in;depth
0pt;original-width 7.0396in;original-height 5.1854in;cropleft "0";croptop
"1";cropright "1";cropbottom "0";filename 'cysolPos.eps';file-properties
"XNPEU";}}

In the case when $C=0$, then $R=$ $Q$, so setting $b=-Q\ \ $and $c=-\frac{%
A_{0}}{2Q}$ we obtain 
\begin{eqnarray*}
l &=&b\frac{1-c^{2}e^{2br}}{1+c^{2}e^{2br}} \\
\theta &=&2\arctan \left( ce^{br}\right)
\end{eqnarray*}%
This is precisely the solution that was obtained by Karigiannis, McKay and
Tsui in \cite{KarigiannisMcKayTsui} for the negative Laplacian flow soliton
in this setting.

\subsubsection{ $R^{2}-4C^{2}<0$}

Now suppose $R^{2}-4C^{2}$ is negative. Then we let 
\[
Q^{2}=4C^{2}-R^{2} 
\]%
The solution for $\alpha $ is then 
\[
\alpha =\frac{4Q^{2}A_{0}}{8CA_{0}-\left( 4R^{2}+A_{0}^{2}\right) \cos
rQ-i\left( 4R^{2}-A_{0}^{2}\right) \sin rQ} 
\]%
Since we need $\alpha $ to be real, we have to have $A_{0}^{2}=\pm 4R^{2}.$
Hence, we get the following possible solutions 
\begin{equation}
\alpha =\left\{ 
\begin{array}{r}
\frac{Q^{2}}{2C\pm R\cos rQ}\ \ \text{if }A_{0}^{2}=+4R^{2} \\ 
\frac{Q^{2}}{2C\pm R\sin rQ}\ \text{if }A_{0}^{2}=-4R^{2}%
\end{array}%
\right.  \label{alphaneg}
\end{equation}%
Note that these solutions are equivalent under diffeomorphisms of $L$ - so
by redefining $r$ we can move from one solution to another. Therefore,
without loss of generality we will only consider the solution 
\begin{equation}
\alpha =\frac{Q^{2}}{2C+R\cos rQ}  \label{alphaneg1}
\end{equation}%
From (\ref{alphaneg1}) we obtain the corresponding solutions for $l$ and $%
\theta $:%
\begin{subequations}%
\label{Qnegsols}%
\begin{eqnarray}
l &=&\frac{QR\sin \left( rQ\right) }{2C+R\cos \left( rQ\right) } \\
\theta &=&2\arctan \left( \frac{2C-R}{Q}\tan \left( \frac{1}{2}rQ\right)
\right)
\end{eqnarray}%
\end{subequations}%
Given the freedom to redefine the coordinate $r$, the solutions of the form (%
\ref{Qnegsols}) are solutions to the system (\ref{CYsoleqs}) together with
the initial conditions 
\begin{equation}
\left\{ 
\begin{array}{r}
\alpha \left( r_{0}\right) =2C\pm R \\ 
l\left( r_{0}\right) =0%
\end{array}%
\right.
\end{equation}%
for some $r_{0}\in \mathbb{R}$ and $\left\vert R\right\vert <2\left\vert
C\right\vert $. Figure \ref{figcysolneg} shows the phase diagram for this
solution with $r_{0}=0$. \FRAME{ftbphFU}{4.8273in}{3.7535in}{0pt}{\Qcb{Phase
diagram for the case $4C^{2}-R^{2}>0.$ }}{\Qlb{figcysolneg}}{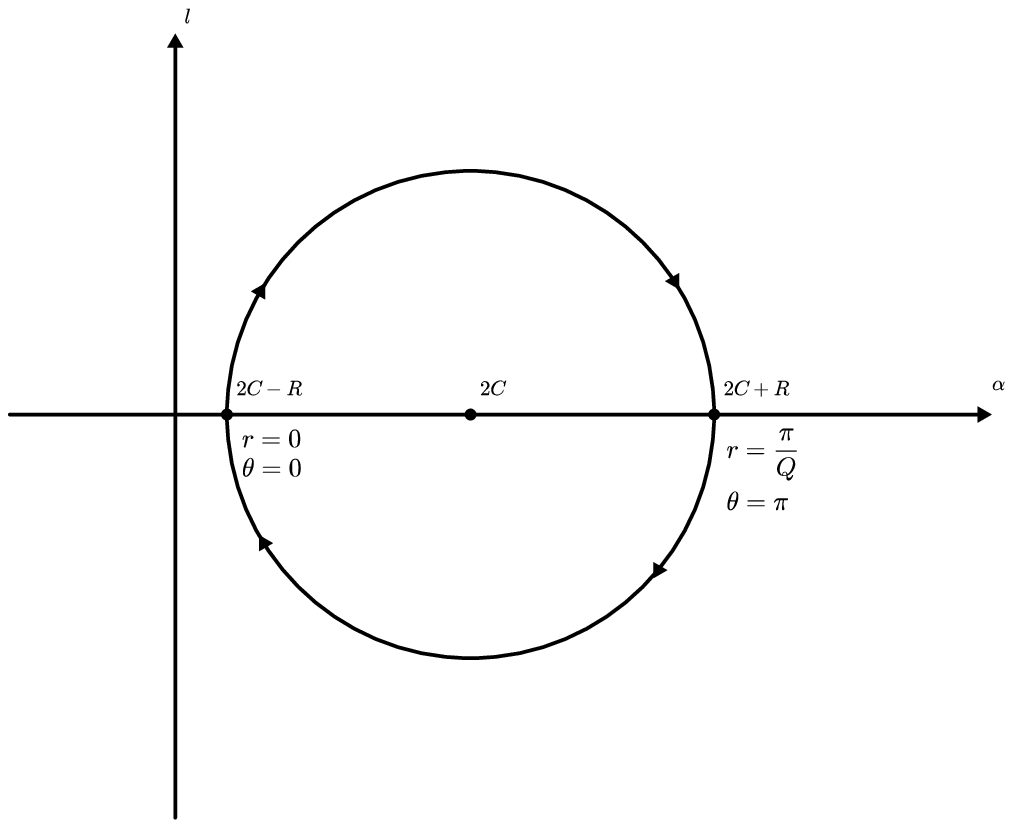}{%
\special{language "Scientific Word";type "GRAPHIC";maintain-aspect-ratio
TRUE;display "USEDEF";valid_file "F";width 4.8273in;height 3.7535in;depth
0pt;original-width 4.7987in;original-height 3.7268in;cropleft "0";croptop
"1";cropright "1";cropbottom "0";filename 'cysolNeg.eps';file-properties
"XNPEU";}}

The solutions (\ref{Qnegsols}) are of particular interest because they are
periodic and thus given appropriate initial conditions they are globally
defined when the manifold $M^{7}=L\times M^{6}$ is compact - in particular
when $L=S^{1}.$ Suppose $r$ is a coordinate on $S^{1}$, taking values in $%
[0,2\pi ).$ Then, if $Q=2n$ for some integer $n$, then the solutions (\ref%
{Qnegsols}) are periodic, and are hence well-defined on $S^{1}$. Note that
when $\frac{1}{2}rQ$ is equal to a half-integer multiple of $\pi ,$ there is
a discontinuity in the solution for $\theta $, with $\theta \longrightarrow
\pi $ when $\frac{1}{2}rQ$ approaches a half-integer multiple of $\pi $ from
the left, and $\theta \longrightarrow -\pi $ when taking the limit from the
right. However $\theta $ is itself defined up to an integer multiple of $%
2\pi $, so the actual solution for the $G_{2}$-structure is still continuous
and moreover smooth.

\subsection{Nearly K\"{a}hler case}

Let us now consider the case when the $6$-dimensional base manifold is
nearly K\"{a}hler. As before, let $k=2$ and $G=1.$ Then, the soliton
equation (\ref{soleq}) become 
\begin{subequations}%
\label{nksoleq}%
\begin{eqnarray}
l^{\prime } &=&-\alpha ^{2}+3\beta ^{2}+12\alpha \beta +2C\alpha -\mu
\label{nksoleql} \\
\left( \alpha -6\beta \right) ^{\prime } &=&\alpha l  \label{nksoleqa} \\
\frac{\beta ^{\prime }l}{\alpha +\beta } &=&9\beta ^{2}-\alpha \beta
+2C\beta +\mu  \label{nksoleqb}
\end{eqnarray}%
\end{subequations}%
This system is very difficult to analyze, because it has no apparent
symmetries or conserved quantities. We can however look at special solutions
where at least one of the variables $\alpha ,\beta ,l$ is constant.

\begin{theorem}
\label{Propconstlsol}The only solutions of the system (\ref{nksoleq}) with
at least one of the variables $\alpha ,\beta ,l$ constant are
\end{theorem}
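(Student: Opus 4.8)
The plan is to carry out an exhaustive case analysis on which of the three variables $\alpha,\beta,l$ is assumed constant, and then chase the consequences through the system (\ref{nksoleq}). The system consists of one first-order ODE for $l'$, one for $(\alpha-6\beta)'$, and one algebraic-differential relation for $\beta'$. Setting a variable to be constant kills one derivative term, which typically forces an algebraic constraint among the remaining quantities; differentiating that constraint then often propagates constancy to the other variables, collapsing the problem to a handful of genuinely constant solutions.

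First I would treat the case $l \equiv l_0$ constant, so $l' = 0$. Equation (\ref{nksoleqa}) becomes $(\alpha - 6\beta)' = \alpha l_0$. If $l_0 = 0$ then $\alpha - 6\beta$ is constant, and (\ref{nksoleql}) gives $-\alpha^2 + 3\beta^2 + 12\alpha\beta + 2C\alpha - \mu = 0$, a polynomial relation; combined with (\ref{nksoleqb}) one should be able to show either that the only consistent possibility is $\alpha,\beta$ both constant (reading off the resulting constant values of $\mu$ and checking (\ref{nksoleqb})), or derive a contradiction. If $l_0 \neq 0$, I would differentiate (\ref{nksoleql}) to get $0 = (-2\alpha + 12\beta + 2C)\alpha' + (6\beta + 12\alpha)\beta' \cdot(\text{sign})$, and use (\ref{nksoleqb}) to express $\beta'$; matching this with the equation coming from differentiating (\ref{nksoleqa}) should overdetermine the system and force constancy or nonexistence. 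Second, the case $\beta \equiv \beta_0$: then $\beta' = 0$, so (\ref{nksoleqb}) reads $0 = (9\beta_0 - \alpha + 2C)\beta_0 + \mu$, which if $\beta_0 \neq 0$ solves $\alpha = 9\beta_0 + 2C + \mu/\beta_0$ — forcing $\alpha$ constant too — and then (\ref{nksoleqa}) gives $\alpha l = (\alpha - 6\beta_0)' = 0$ so either $\alpha = 0$ or $l = 0$; each branch is then pinned down by (\ref{nksoleql}). If $\beta_0 = 0$ we are effectively in the Calabi-Yau-like subcase and should recover (or exclude) those solutions. Third, the case $\alpha \equiv \alpha_0$: then $\alpha' = 0$, and differentiating (\ref{nksoleqa}) gives $-6\beta'' = \alpha_0 l'$, while (\ref{nksoleqa}) itself gives $-6\beta' = \alpha_0 l$, so $\beta'' = -\tfrac{1}{6}\alpha_0 l'$ and one can feed $l'$ from (\ref{nksoleql}) and $\beta' \cdot l$ from (\ref{nksoleqb}); this yields a closed (autonomous) ODE in $\beta$ whose constant-$\alpha$-compatible solutions I would classify, expecting again that the only ones with $\alpha$ genuinely constant have $\beta$ (hence $l$) constant as well, possibly with a one-parameter family if $\alpha_0 = 0$.

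Throughout, once a fully-constant candidate $(\alpha,\beta,l) = (\alpha_0,\beta_0,l_0)$ is reached, I would substitute back into all three equations of (\ref{nksoleq}) to read off the admissible triples together with the corresponding $\mu$, and then translate each back into the geometric data $(h,\theta)$ via $\alpha = \theta'$ and $\beta = \lambda h^{-1}\sin\theta$ (recall $G=1$), discarding any that are inconsistent with $h$ nowhere zero or with the co-closed constraint (\ref{gamdot0}). The main obstacle I anticipate is the case where exactly one variable is constant but the other two are not: there the differentiated constraints must be shown to be genuinely overdetermined rather than merely consistent, and handling the denominators $\alpha+\beta$ in (\ref{nksoleqb}) (which the earlier theorems assume nowhere zero) requires care — one has to either invoke that standing hypothesis or separately dispose of the locus $\alpha + \beta = 0$, which by the Remark following the uniqueness theorem forces $\alpha,\beta$ constant anyway. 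I expect the bookkeeping of signs and the $\pm R$ branching (paralleling the Calabi-Yau case) to be the most error-prone part, but conceptually straightforward.
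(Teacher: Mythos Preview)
Your proposal is correct and follows essentially the same approach as the paper: an exhaustive case analysis on which of $\alpha,\beta,l$ is constant, then differentiating the surviving algebraic constraints to force constancy of the remaining variables and read off the finite list of admissible triples. The paper's execution differs only in minor tactics --- for the constant-$\alpha$ case it exhibits an explicit first integral $\alpha l^{2}+12\bigl(\beta^{3}+6\alpha\beta^{2}-(\alpha^{2}-2C\alpha+\mu)\beta\bigr)$ (combined with the relation $\alpha l^{2}=-6(9\beta^{2}-\alpha\beta+2C\beta+\mu)(\alpha+\beta)$ from (\ref{nksoleqb})) to obtain a constant-coefficient cubic in $\beta$ directly, rather than your proposed second-order ODE route, and for the constant-$l$ case it relies on a computer-algebra verification (Maple) that the two polynomial relations in $\alpha,\beta$ have only constant solutions.
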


\begin{enumerate}
\item $\alpha =0,\ \beta =0,~l^{\prime }=-\mu ,$ for arbitrary $\mu $

\item $\alpha =0,$ $\mu =-9\beta ^{2}-2C\beta ,$ $l^{\prime }=2\beta \left(
6\beta +C\right) ,$ for arbitrary constant $\beta $

\item $\alpha =0,$ $\beta =-\frac{1}{6}C,\ \mu =\frac{1}{12}C^{2},$ $l$
arbitrary

\item $l=0,\ \alpha =\frac{1}{10}C\pm \frac{1}{10}\sqrt{C^{2}-10\mu }$, $%
\beta =-\alpha $ \ for $\mu \leq \frac{C^{2}}{10}$

\item $l=0,\ \alpha =4\sqrt{3\mu }+2C,\beta =\frac{1}{3}\sqrt{3\mu }\ $for $%
\mu \geq 0$
\end{enumerate}

\begin{proof}
We will consider different cases where $\beta ,$ $\alpha $ or $l$ are
constant.

\begin{description}
\item[Constant $\protect\beta $] Suppose $\beta ^{\prime }=0$. Then, (\ref%
{nksoleqb}) becomes 
\begin{equation}
\left( 9\beta ^{2}-\alpha \beta +2C\beta +\mu \right) \left( \alpha +\beta
\right) =0  \label{nksoleqb2}
\end{equation}%
From this, either $\mu =0$ and $\beta =0,$ or $\alpha $ must also be
constant and must either satisfy $9\beta ^{2}-\alpha \beta +2C\beta +\mu =0$
or $\alpha +\beta =0$. In the first case, the equations reduce to (\ref%
{CYsoleqs}) - the equations we had in the Calabi-Yau case. Now however,
since $\lambda \neq 0$, in order to have $\beta =\frac{\lambda \sin \theta }{%
h}=0$, we must have $\sin \theta =0.$ In particular, $\theta $ must be
constant. However, from Theorem \ref{ThmCYSols}, this is true if and only if 
$C=0$ and we have a trivial solution. Therefore, for a non-trivial solution, 
$\alpha $ has to be constant. From (\ref{nksoleqa}), this however implies
that $\alpha l=0.$ The case $l=0$ will be considered below. Suppose $\alpha
=0$. Then, from (\ref{nksoleql}) we have 
\[
l^{\prime }=3\beta ^{2}-\mu 
\]%
From (\ref{nksoleqb2}), either $\beta $ is also zero, and $l^{\prime }=-\mu $
or $\mu =-9\beta ^{2}-2C\beta ,$ so that 
\[
l^{\prime }=2\beta \left( 6\beta +C\right) . 
\]%
Thus, we obtain solutions $1$ and $2$.

\item[Constant $\protect\alpha $] Suppose $\alpha ^{\prime }=0$. Then, the
equations (\ref{nksoleq}) become 
\begin{subequations}
\begin{eqnarray}
l^{\prime } &=&-\alpha ^{2}+3\beta ^{2}+12\alpha \beta +2C\alpha -\mu
\label{nksolacl} \\
\beta ^{\prime } &=&-\frac{1}{6}\alpha l  \label{nksolbac} \\
\beta ^{\prime }l &=&\left( 9\beta ^{2}-\alpha \beta +2C\beta +\mu \right)
\left( \alpha +\beta \right)  \label{nksolcac}
\end{eqnarray}%
\end{subequations}%
We already considered the case of constant $\beta $, so suppose $\alpha \neq
0\ $and $l\neq 0$. From equations (\ref{nksolacl}) and (\ref{nksolbac}), we
find a conserved quantity $F,$ given by 
\[
\alpha l^{2}+12\left( \beta ^{3}+6\alpha \beta ^{2}-\left( \alpha
^{2}-2C\alpha +\mu \right) \beta \right) =F 
\]%
while, from equations (\ref{nksolbac}) and (\ref{nksolcac}), we find 
\[
\alpha l^{2}+6\left( 9\beta ^{2}-\alpha \beta +2C\beta +\mu \right) \left(
\alpha +\beta \right) =0. 
\]%
From these two equations, we find that $\beta $ satisfies a cubic equation
with constant coefficients, and hence must also be constant. Therefore, we
do not get any new solutions in this case.

\item[Constant $l$] Suppose $l^{\prime }=0$. Then, the equation (\ref%
{nksoleql}) becomes%
\begin{equation}
-\alpha ^{2}+3\beta ^{2}+12\alpha \beta +2C\alpha -\mu =0  \label{nksoleql2}
\end{equation}%
Differentiating this, we find 
\[
\alpha ^{\prime }\left( -\alpha +6\beta +C\right) +3\beta ^{\prime }\left(
\beta +2\alpha \right) =0 
\]%
Substituting (\ref{nksoleqa}), we get 
\[
\alpha l\left( -\alpha +6\beta +C\right) +3\beta ^{\prime }\left( 13\beta
+2C\right) =0 
\]%
Now using (\ref{nksoleqb}), we get another polynomial equation for $\alpha $
and $\beta $%
\begin{equation}
\alpha l^{2}\left( -\alpha +6\beta +C\right) +3\left( 9\beta ^{2}-\alpha
\beta +2C\beta +\mu \right) \left( \alpha +\beta \right) \left( 13\beta
+2C\right) =0  \label{nksoleqab2}
\end{equation}%
Therefore, $\alpha $ and $\beta $ satisfy the polynomial equations (\ref%
{nksoleql2}) and (\ref{nksoleqab2}). By explicit calculations (using \emph{%
Maple}) it can be shown that $\alpha $ and $\beta $ must be constants that
depend on $C,l$ and $\mu .$ However, it means that in equation (\ref%
{nksoleqa}), either $\alpha =0$ or $l=0.$ Suppose $\alpha =0$. Then, from (%
\ref{nksoleql}) we get that 
\[
\beta ^{2}=\frac{\mu }{3} 
\]%
So $\beta ^{\prime }=0,$ and thus from (\ref{nksoleqb}), we get 
\[
\left( 9\beta ^{2}+2C\beta +\mu \right) \beta =0 
\]%
So either $\beta =0,$ which is a case we already covered in (\ref{alphabet0}%
), or 
\begin{equation}
\beta =-\frac{1}{6}C,\ \mu =\frac{1}{12}C^{2}  \label{betamusol}
\end{equation}%
Note that in these cases $l$ is an arbitrary constant.
\end{description}

The other possibility is that $l=0$. So suppose $\alpha \neq 0$. In this
case, $\alpha $ and $\beta $ have to satisfy (\ref{nksoleql2}) and 
\begin{equation}
\left( 9\beta ^{2}-\alpha \beta +2C\beta +\mu \right) \left( \alpha +\beta
\right) =0  \label{nksoleqab3}
\end{equation}%
Then either $\beta =-\alpha $, where $\alpha $ satisfies 
\[
10\alpha ^{2}-2C\alpha +\mu =0, 
\]%
which gives solution 4, or 
\begin{equation}
9\beta ^{2}-\alpha \beta +2C\beta +\mu =0.  \label{nksoleqab4}
\end{equation}%
Note that the equations (\ref{nksoleql2}) and (\ref{nksoleqab4}) are
equivalent if $\beta =-\alpha $, but we already considered this case.
Suppose $\beta \neq 0$. Then, from (\ref{nksoleqab4}), 
\[
\alpha =\frac{1}{\beta }\left( \mu +2C\beta +9\beta ^{2}\right) 
\]%
Using this we find that (\ref{nksoleql2}) simplifies to 
\[
\left( \mu -3\beta ^{2}\right) \left( 10\beta ^{2}+2C\beta +\mu \right) =0 
\]%
and hence%
\[
\alpha \left( \mu -3\beta ^{2}\right) \left( \alpha +\beta \right) =0. 
\]%
We have already considered the cases $\alpha =0$ and $\alpha +\beta =0$,
hence $\beta ^{2}=\frac{\mu }{3}$. Thus we get solution 5. If however, $%
\beta =0$, then equation (\ref{nksoleqab4}) forces $\mu =0$, and hence from (%
\ref{nksoleql2}) $\alpha =0$ or $\alpha =2C$. These cases are however
already covered. Therefore this exhausts all the possible solutions.
\end{proof}

\begin{remark}
Recall from (\ref{torscompabc}) that the $\tau _{27}$ component of the
torsion is proportional to $\alpha +\beta $. Therefore, in Proposition \ref%
{Propconstlsol}, the solution 5 always has $\tau _{27}=0,$ since $\alpha
+\beta =0.$ Therefore, this solution corresponds to a nearly parallel $G_{2}$
structure with only the $\tau _{1}$ torsion component being non-zero. Since
in this solution the only restriction on $\mu $ is that $\mu \leq \frac{C^{2}%
}{10}$ we may have solutions of different kinds - shrinking solutions ($\mu $
negative), expanding solutions ($\mu $ positive), and steady solutions ($\mu
=0$). Similarly, in solution 5, we can get $\alpha +\beta =0$ if $C$ is
negative and 
\[
\mu =\frac{12}{169}C^{2}. 
\]%
So for this value of $\mu $ we obtain another nearly parallel solution.
However this time, these are all expanding solutions (except the trivial
steady case when $C=0$). Moreover, by fixing 
\[
\mu =\frac{1}{3}C^{2} 
\]%
we obtain $\alpha -6\beta =0.$ This gives $\tau _{1}=0$, and a non-zero $%
\tau _{27}$ - hence this soliton solution is a $G_{2}$-structure of pure
type $\mathbf{27}.$
\end{remark}

A special simple solution is $\alpha =\beta =0$. Note that in this case, the
torsion vanishes, and we have a torsion-free $G_{2}$-structure. Then, the
first equation just becomes 
\begin{equation}
l^{\prime }=-\mu
\end{equation}%
From the definition of $\beta ,$ we find that $\sin \theta =0$, so $\theta
=0 $ or $\pi $. However from $\gamma =0$, we also have $h^{\prime }=-\lambda
\cos \theta .$ Thus we have the following solutions: 
\begin{equation}
\left\{ 
\begin{array}{c}
\theta =0,\ \ h=-\lambda r+h_{0},\ l=-\mu r+l_{0} \\ 
\theta =\pi ,\ \ h=\lambda r+h_{0},\ l=-\mu r+l_{0}%
\end{array}%
\right.  \label{alphabet0}
\end{equation}%
These are precisely the solutions also obtained in \cite%
{KarigiannisMcKayTsui}.

Consider now another special case where $l$ is constant. It then turns out
that if $l$ is constant, then necessarily, either $l=0$ or $\mu $ has take a
particular value that depends on $C$. These cases then cover all the
remaining exact solutions of the corresponding system that were found in 
\cite{KarigiannisMcKayTsui}, as well as additional solutions.

Proposition \ref{Propconstlsol} gives us the critical points of the
equations (\ref{nksoleq}), however it can be easily seen that linearizations
at these critical points are degenerate, and therefore do not provide much
information about the full system.

\bibliographystyle{jhep-a}
\bibliography{refs2}

\end{document}